\@date \else {\vskip3ex \centering\footnotesize\@date\par\vskip1ex}\fi
\else \@footnotetext{\@setdate}\fi}
\newtheorem{theorem}{Theorem}[section]
\newtheorem{lemma}[theorem]{Lemma}
\newtheorem{proposition}[theorem]{Proposition}
\newtheorem{corollary}[theorem]{Corollary}
\theoremstyle{definition}
\newtheorem{define}[theorem]{Definition}
\newtheorem{remark}[theorem]{Remark}
\newcommand\ba[1]{\begin{align}\label{#1}}
\newcommand\ea{\end{align}}
\newcommand\bas{\begin{align*}}
\newcommand\eas{\end{align*}}
\newcommand\ee{\end{equation}}
\newcommand\be{\begin{equation}}
\newcommand\ees{\end{equation*}}
\newcommand\bes{\begin{equation*}}
\mathchardef\emptyset="001F
\newcommand{\e}{\varepsilon}
\newcommand{\ep}{\varepsilon}
\newcommand{\R}{{\mathbb R}}
\newcommand{\wtos}{\mathrel{\mathop{\rightharpoonup}\limits^*}}
\newcommand{\N}{{\mathbb{N}}}
\newcommand\norm[1]{\left\|#1\right\|}
\newcommand{\abs}[1]{\left\lvert#1\right\rvert} 
\newcommand{\fsp}[1]{\left(#1\right)} 
\newcommand{\fmp}[1]{\left[#1\right]}
\newcommand{\flp}[1]{\left\{#1\right\}}
\newcommand{\limn}{\lim_{n\rightarrow\infty}}
\newcommand{\seqn}[1]{\left\{#1\right\}}
\newcommand\nn{\nonumber}
\definecolor{CMUred}{RGB}{153,0,0}
\definecolor{CMUgreen}{RGB}{0,135,81}
\definecolor{CMUblue}{RGB}{0,51,127}
\newcommand{\argmin}{{\operatorname{arg\,min}}}
\newcommand{\ta}{{\alpha}}
\newcommand{\bs}{{\bar s}}
\newcommand\F{\mathcal{F}}
\def\argmin{\mathop{\rm arg\, min}}
\numberwithin{equation}{section}
\newcommand{\normmm}[1]{{\left\vert\kern-0.25ex\left\vert\kern-0.25ex\left\vert #1 
    \right\vert\kern-0.25ex\right\vert\kern-0.25ex\right\vert}}
\newcommand{\AAA}{\color{black}}
\newcommand{\RRR}{\color{red}}
\title{One dimensional fractional order $TGV$: Gamma-convergence and bilevel training scheme}
\author{Elisa Davoli }
 \address[Elisa Davoli ]{Faculty of Mathematics, University of Vienna\\ Oskar-Morgenstern-Platz 1, A-1090 Vienna, Austria}
 \email[Elisa Davoli ] {elisa.davoli@univie.ac.at}
\author[P. Liu] {Pan Liu}
 \address[Pan Liu]{Cambridge Image Analysis, Department of Applied Mathematics and Theoretical Physics,\\ University of Cambridge, Wilberforce Road, Cambridge CB3 0WA, UK}
 \email[P. Liu] {panliu.0923@maths.cam.ac.uk}
\subjclass[2010]{26B30, 94A08, 	47J20}
\keywords{total generalized variation, fractional derivatives,  optimization and control, computer vision and pattern recognition}
\date{}                                           
\begin{document}


\begin{abstract}
New fractional $r$-order seminorms, $TGV^r$, $r\in \R$, $r\geq 1$, are proposed in the one-dimensional (1D) setting, as a generalization of the integer order $TGV^k$-seminorms, $k\in\mathbb{N}$. The fractional $r$-order $TGV^r$-seminorms are shown to be intermediate between the integer order $TGV^k$-seminorms. A bilevel training scheme is proposed, where under a box constraint a simultaneous optimization with respect to parameters and order of derivation is performed. Existence of solutions to the bilevel training scheme is proved by $\Gamma$--convergence. Finally, the numerical landscape of the cost function associated to the bilevel training scheme is discussed for two numerical examples.
\end{abstract}
\maketitle

\thispagestyle{empty}

%
%
\section{Introduction}
In the last decades, Calculus of Variations and Partial Differential Equations (PDE) methods have proven to be very efficient in signal (1D) and image (2D) denoising problems. Signal (image) denoising consists, roughly speaking, in recovering a noise-free clean signal $u_c$ starting from a corrupted signal $u_\eta=u_c+\eta$, by filtering out the noise encoded by $\eta$.
 One of the most successful variational approach to signal (image) denoising (see, for example \cite{rudin1993segmentation, rudin1994total, rudin1992nonlinear}) relies on the ROF total-variational functional
\begin{align}\label{mumfordshahori}
ROF(u):=\norm{u-u_\eta}_{L^2(I)}^2+\alpha TV(u),
\end{align}
introduced in \cite{rudin1993segmentation}. Here $I=(0,1)$ represents the domain of a one-dimensional image (a signal), $\alpha\in\mathbb R^+$, and $TV(u)=\abs{u'}_{\mathcal{M}_b(I)}$, stands for the total mass of the measure $u'$ on $I$ (see \cite[Definition 1.4]{ambrosio.fusco.pallara}).\\ 
%

An important role in determining the reconstruction properties of the ROF functional is played by the parameter $\alpha$. Indeed, if $\alpha$ is too large, then the total variation of $u$ is too penalized and the image turns out to be over-smoothed, with a resulting loss of information on the internal edges of the picture. 
Conversely, if $\alpha$ is too small then the noise remains un-removed. The choice of the ``best" parameter $\alpha$ then becomes an important task.\\

 In \cite{reyes2015structure} the authors proposed a training scheme $(\mathcal B)$ relying on a bilevel learning optimization defined in machine learning, namely on a semi-supervised training scheme that optimally adapts itself to the given ``perfect data" (see \cite{chen2013revisiting, chen2014insights, domke2012generic, domke2013learning, tappen2007utilizing,tappen2007learning}). This training scheme searches for the optimal $\alpha$ so that the recovered image $u_\alpha$, obtained as a minimizer of \eqref{mumfordshahori}, optimizes the $L^2$-distance from the clean image ${u_c}$. An implementation of $(\mathcal B)$ equipped with total variation is the following: 
\begin{flalign}
\text{Level 1. }&\,\,\,\,\,\,\,\,\,\,\,\,\,\,\,\,\,\,\,\,\,\,\,\,\,\,\,\,\,\,\,\,\,\,\,\,\,\alpha_m\in\argmin\flp{\norm{u_\alpha-u_c}_{L^2(I)}^2:\,\,\alpha>0}&\\
\text{Level 2. }&\,\,\,\,\,\,\,\,\,\,\,\,\,\,\,\,\,\,\,\,\,\,\,\,\,\,\,\,\,\,\,\,\,\,\,\,\,u_{\alpha}:=\argmin\flp{\norm{u-u_\eta}_{L^2(I)}^2+\alpha TV(u):\,\,u\in SBV(I)},\label{result_level_bi}&
\end{flalign}
where $SBV(I)$ denotes the set of special functions of bounded variation in $I$ (see, e.g. \cite[Chapter 4]{ambrosio.fusco.pallara}).\\

It is well known that the ROF model in \eqref{mumfordshahori} suffers drawbacks like the staircasing effect, and the training scheme $(\mathcal B)$ inherits that feature, namely the optimized reconstruction function $u_{\alpha_m}$ also exhibits the staircasing effect. One approach to counteract this problem is to insert higher-order derivatives in the regularizer (see \cite{burger2015infimal,  chan2000high, maso2009higher,papafitsoros2013study}). Two of the most successful image-reconstruction functionals among those involving mixed first and higher order terms are the infimal-convolution total variation ($ICTV$) \cite{burger2015infimal} and the total generalized variation ($TGV$) models \cite{papafitsoros2013study}. Note that they coincide with each other in the one-dimensional setting.\\\\
For $I:=(0,1)\subset \R$, $u\in BV(I)$, $k\in \mathbb N$, and $\alpha=(\alpha_0,\dots,\alpha_k)\in \R^{k+1}_{+}$, the $TGV^k_{\alpha}$ regularizer (see \cite{valkonen2013total}) is defined as
\be
\abs{u}_{TGV^1_{\alpha}(I)}:=\alpha_0 TV(u),
\ee
and
\begin{align*}
\label{inter_TGV}
\nonumber\abs{u}_{TGV_{\ta}^{k+1}(I)}:=\inf&\flp{\alpha_0\abs{u'- v_0}_{\mathcal{M}_b(I)}+\alpha_1\abs{v_0'-v_1}_{{\mathcal{M}_b(I)}}+\right.\\
&\nonumber \qquad \left.\dots+\alpha_{k-1}\abs{v_{k-2}'-v_{k-1}}_{{\mathcal{M}_b(I)}}+\alpha_{k}\abs{v'_{k-1}}_{{\mathcal{M}_b(I)}}:\right.\\
&\qquad\left. v_i\in BV(I)\text{ for }0\leq i\leq k-1}.
\end{align*}
For instance, for $k=1$, the $TGV^2_{\alpha_0,\alpha_1}$ regularizer reads as
\bes
\abs{u}_{TGV_{\alpha_0,\alpha_1}^{2}(I)}:=\inf\flp{\alpha_0\abs{u'- v_0}_{\mathcal{M}_b(I)}+\alpha_1\abs{v'_0}_{{\mathcal{M}_b(I)}},\,\, v_0\in BV(I)}.
\ees

Substituting $TGV^2_{\alpha_0,\alpha_1}$ into \eqref{result_level_bi} provides a bilevel training scheme with $TGV$ image-reconstruction model. We recall that large values of $\alpha_1$ will yield regularized solutions that are close to $TV$-regularized reconstructions, and large values of $\alpha_0$ will result in $TV^2$-type solutions (see, e.g., \cite{papafitsoros2014combined}). The best choice of parameters $\alpha_0$ and $\alpha_1$ is determined by an adaptation of the training scheme $(\mathcal B)$ above (see \cite{2015arXiv150807243D} for a detailed study). \\\\
In the existing literature a regularizer is fixed a priori, and the biggest effort is concentrated on studying how to identify the best parameters. In the case of the $TGV^k$-model, this amounts to set manually the value of $k$ first, and then determine the optimal $\alpha_m$ in \eqref{result_level_bi}. However, there is no evidence suggesting that $TGV^2$ will always perform better than $TV$. In addition, the higher order seminorms $TGV^{k}$, $k\geq 2$, have rarely been analyzed, and hence their performance is largely unknown. Numerical simulations show that for different images (signals in 1D), different orders of $TGV^{k}$ might give different results. The main focus of this paper is exactly to investigate how to optimally tune both the weight $\alpha$ and the order $k$ of the $TGV^{k}_{\ta}$-seminorm, in order to achieve the best reconstructed image.\\\\
Our result is threefold. First, we develop a bilevel training scheme, not only for parameter training, but also for determining the optimal order $k$ of the regularizer $TGV^{k}$ for image reconstruction. A straightforward modification of $(\mathcal B)$ would be to just insert the order of the regularizer inside the learning level 2 in \eqref{result_level_bi}. Namely,
\begin{flalign}
\text{Level 1. }&\,\,\,\,\,\,\,\,\,\,\,\,\,\,\,\,\,\,\,\,\,\,\,\,(\tilde \alpha,\tilde k):=\argmin\flp{\norm{u_{\alpha,k}-u_c}_{L^2(I)}^2:\,\,\ta>0,\,\,k\in \mathbb N}&\\
\text{Level 2. }&\,\,\,\,\,\,\,\,\,\,\,\,\,\,\,\,\,\,\,\,\,\,\,\,u_{\ta,k}:=\argmin\flp{\norm{u-u_\eta}_{L^2(I)}^2+\abs{u}_{TGV_{\ta}^{k}(I)}:\,\,u\in SBV(I)}.&
\end{flalign}

Often, in order to show the existence of a solution of the training scheme and also for the numerical realization of the model, a \emph{box constraint} 
\be\label{box_const_intro}
(\alpha, k)\in [\mathrm{P}, \mathrm{1/P}]^{k+1}\times [1,\mathrm{1/P}],
\ee
where $P\in(0,1)$ small is a fixed real number, needs to be imposed (see, e.g. \cite{bergounioux1998optimal, de2013image}). However, such constraint makes the above training scheme less interesting. To be precise, restricting the analysis to the case in which $k\in\mathbb N$ is an integer, the box constraint \eqref{box_const_intro} would only allow $k$ to take finitely many values, and hence the optimal order $\tilde k$ of the regularizer would simply be determined by performing scheme $(\mathcal B)$ finitely many times, at each time with different values of $k$. In addition, finer texture effects, for which an ``intermediate" reconstruction between the one provided by $TGV^k$ and $TGV^{k+1}$ for some $k\in\mathbb{N}$ would be needed, might be neglected in the optimization procedure.\\

Therefore, a main challenge in the setup of such a training scheme is to give a meaningful interpolation between the spaces $TGV^{k}$ and $TGV^{k+1}$, and hence to guarantee that the collection of such spaces itself exhibits certain compactness and lower semicontinuity properties. To this purpose, we modify the definition of the $TGV^{k}$ functionals by incorporating the theory of fractional Sobolev spaces, and we introduce the notion of fractional order $TGV^{k+s}$ spaces (see Definition \ref{TGV_fractional}), where $k\in\mathbb N$, and $0<s<1$. For $k=1$, our definition reads as follows.
\begin{multline*}
\abs{u}_{TGV_{\alpha}^{1+s}(I)}:=\inf\flp{\alpha_0\abs{u'- sv_0}_{\mathcal{M}_b(I)}+\alpha_1{s(1-s)}\abs{v_0}_{W^{s,1+s(1-s)}(I)}\right.\\
 \left.+\alpha_0 s(1-s)\Big|\int_I v_0(x)\,dx\Big|:\, v_0\in W^{s,1+s(1-s)}(I)}.
\end{multline*}
In the expression above, $W^{s,1+s(1-s)}(I)$ is the \emph{fractional Sobolev space} of order $s$ and integrability $1+s(1-s)$. For every $k\in\N$ and $s\in[0,1]$ we additionally introduce the sets
\be
BGV_{\alpha}^{k+s}(I):=\flp{u\in L^1(I):\, \abs{u}_{TGV_{\alpha}^{k+s}(I)}<+\infty},
\ee
 namely the classes of functions with bounded generalized total-variation seminorm.\\

In our first main result (see Theorem \ref{intermediate}) we show that the $TGV^{1+s}$ seminorm is indeed intermediate between $TGV^1$ and $TGV^2$, i.e., we prove that, up to subsequences, 
\be\label{intro_equiv_equiv}
\lim_{s\nearrow 1}\abs{u}_{TGV^{1+s}_{\alpha}(I)}= \abs{u}_{TGV^2_{\alpha}(I)}\text{ and }\lim_{s\searrow 0}\abs{u}_{TGV^{1+s}_{\alpha}(I)}=\alpha_0\abs{u'}_{\mathcal{M}_b(I)}.
\ee
Equation \eqref{intro_equiv_equiv} shows that, for $s\nearrow 1$, the behavior of the $TGV^{1+s}$-seminorm is close to the one of the standard $TGV^2$-seminorm, whereas for $s\searrow 0$ it approaches the $TV$ functional. We additionally prove (see Corollary \ref{cor:higher-order}) that analogous results hold for higher order $TGV^{k+s}$-seminorms.
We point out that working with such interpolation spaces has many advantages. Indeed, $TGV^{k+s}$ is expected to inherit the properties of fractional order derivatives, which have shown to be able to reduce the staircasing and contrast effects in noise-removal problems (see, e.g. \cite{chen2013fractional}).\\

Our second and third main results (Theorems \ref{thm:new-Gamma} and \ref{main_result}) concern the following improved training scheme $(\mathcal R)$, which, under the box constraint \eqref{box_const_intro}, simultaneously optimizes both the parameter $\alpha$ and the order $r$ of derivation:
\begin{flalign}
\text{Level 1. }&\,\,\,\,\,\,\,\,\,\,\,\,\,\,(\tilde \alpha,\tilde r):=\argmin\flp{\norm{u_{\alpha,r}-u_c}_{L^2(I)}^2,\,\,(\alpha, r)\in [\mathrm{P}, \mathrm{1/P}]^{{\lfloor r\rfloor}+1}\times [1,\mathrm{1/P}]},\label{frac_para_bi_intro}&\\
\text{Level 2. }&\,\,\,\,\,\,\,\,\,\,\,\,\,\,u_{\ta,r}:=\argmin\flp{\norm{u-u_\eta}_{L^2(I)}^2+TGV_{\alpha}^{r}(u):\,\, u\in BGV_{\alpha}^{r}(I)}.&
\end{flalign}
In the definition above, ${\lfloor r\rfloor}$ denotes the largest integer strictly smaller than or equal to $r$.

We first show in Theorem \ref{thm:new-Gamma} that the fractional order $TGV^r_{\alpha}$ functionals 
\be
 \F^{r}_{\alpha}(u):=\norm{u-u_\eta}_{L^2(I)}^2+TGV_{\alpha}^{r}(u)\quad\text{for every }u\in BGV_{\alpha}^{r}(I)
 \ee
 are continuous, in the sense of $\Gamma$-convergence in the weak* topology of $BV(I)$ (see \cite{braides} and \cite{dalmaso}), with respect to the parameters $\alpha$ and the order $r$. Secondly, in Theorem \ref{main_result} we exploit this $\Gamma$-convergence result, to prove existence of solutions to our training scheme $(\mathcal R)$. Note that, according to the given noisy image $u_\eta$ and noise-free image $u_c$, the Level 1 in our training scheme $(\mathcal R)$ provides simultaneously an optimal regularizer $TGV^{\tilde r}$ and a corresponding optimal parameter $\tilde \alpha\in [\mathrm{P}, \mathrm{1/P}]^{{\lfloor \tilde r\rfloor}+1}$. We point out that, in general, the optimal order of derivation $\tilde{r}$ might, or might not, be an integer. In other words, the fractional $TGV^r$ seminorms are not intended as an improvement but rather as an extension of the integer order $TGV^k$ seminorms, which for some classes of signals might provide optimal reconstruction and be selected by the bilevel training scheme.\\

Although this paper mainly focuses on a theoretical analysis of $TGV^r$ and on showing the existence of optimal results for the training scheme $(\mathcal R)$, in Section \ref{num_sim} some preliminary numerical examples are discussed (see Figures \ref{fig:fig1}--\ref{fig:fig2}). We stress that a complete description of the optimality conditions and a reliable numerical scheme for identifying the optimal solution of the training scheme \eqref{frac_para_bi_intro} are beyond the scope of this work, and are still a challenging open problem. We refer to \cite{de2017numerical,2016arXiv160201278B} for some preliminary results in this direction. The two-dimensional setting of fractional order $TGV^r$ and $ICTV^r$ seminorms, as well as more extensive numerical analysis and examples for different type of images (with large flat areas, fine details, etc.), will be the subject of the follow-up work \cite{EDPL_TGV_tgv}.\\\\ 
Our paper is organized as follows. In Section \ref{prelimy} we review the definitions and some basic properties of fractional Sobolev spaces. In Section \ref{sec_frac_TGV} we introduce the fractional order $TGV^{r}$ seminorms, we study their main properties, and prove that they are intermediate between integer-order seminorms (see Theorem \ref{intermediate}). In Section \ref{sec:Gamma} we characterize the asymptotic behavior of the functionals $\F^{r}_{\alpha}$ with respect to parameters and order of derivations (see Theorem \ref{thm:new-Gamma}). In Section \ref{BLSFRAC} we introduce our training scheme $(\mathcal R)$. In particular, in Theorem \ref{main_result} we show that  $(\mathcal R)$ admits a solution under the box constraint \eqref{box_const_intro}.  Lastly, in Section \ref{num_sim} some examples and insights are provided.

\section{The theory of fractional Sobolev Spaces}\label{prelimy}
In what follows we will assume that $I=(0,1)$. We first recall a few results from the theory of fractional Sobolev spaces. We refer to \cite{dinezza.palatucci.valdinoci} for an introduction to the main results, and to \cite{adams,landkof,leoni,mazya} and the references therein for a comprehensive treatment of the topic. 
\begin{define}[Fractional Sobolev spaces]
For $0<s<1$, $1\leq p<+\infty$, and $u\in L^p(I)$, we define the \emph{Gagliardo seminorm} of $u$ by 
\be\label{tv_k_method}
\abs{u}_{W^{s,p}(I)}:=\fsp{\int_{I}\int_{I}\frac{\abs{u(x)-u(y)}^p}{|x-y|^{1+sp}}\,dx\,dy}^{\frac1p}.
\ee

%
We say that $u\in W^{s,p}(I)$ if
\be
\norm{u}_{W^{s,p}(I)}:=\norm{u}_{L^p(I)}+\abs{u}_{W^{s,p}(I)}<+\infty.
\ee
\end{define}
The following embedding results hold true (\cite[Theorems 6.7, 6.10, and 8.2, and Corollary 7.2]{dinezza.palatucci.valdinoci}).
\begin{theorem}[Sobolev Embeddings - 1]
\label{embedding} Let $s\in(0,1)$ be given.
\begin{enumerate}[$1$.]
\item Let $p<\frac{1}{s}$. Then there exists a positive constant $C=C(p,s)$ such that for every $u\in W^{s,p}(I)$ there holds
\be
\label{eq:embedding}\|u\|_{L^q(I)}\leq C\|u\|_{W^{s,p}(I)}\ee
for every $q\in [1,\tfrac{p}{1-sp}]$. If $q<\tfrac{p}{1-sp}$, then the embedding of $W^{s,p}(I)$ into $L^q(I)$ is also compact.

\item Let $p=\frac{1}{s}$. Then the embedding in \eqref{eq:embedding} holds for every $q\in [1,+\infty).$

\item Let $p>\frac{1}{s}$. Then there exists a positive constant $C=C(p,s)$ such that for every $u\in W^{s,p}(I)$ we have
\bes
\|u\|_{\mathcal C^{0,\alpha}(I)}\leq C\|u\|_{W^{s,p}(I)},
\ees
with $\alpha:=\frac{sp-1}{p}$, where  
\be
\norm{u}_{\mathcal C^{0,\alpha}(I)}:=\|u\|_{L^{\infty}(I)}+\sup_{x\neq y\in I}\frac{|u(x)-u(y)|}{|x-y|^{\alpha}}.
\ee
\end{enumerate}
\end{theorem}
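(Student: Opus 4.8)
The plan is the classical one. Since $I=(0,1)$ is a bounded interval, hence an extension domain, I would reduce each of the three statements to its counterpart on the whole line, prove the global version, and restrict. The preliminary step is a bounded linear extension operator $E\colon W^{s,p}(I)\to W^{s,p}(\mathbb R)$ with $(Eu)|_I=u$ and $\|Eu\|_{W^{s,p}(\mathbb R)}\le C\|u\|_{W^{s,p}(I)}$: in one dimension one reflects $u$ evenly across the endpoints $0$ and $1$ to a function on $(-1,2)$ and multiplies by a fixed cut-off $\chi\in C_c^\infty((-1,2))$ with $\chi\equiv1$ on $[0,1]$; that the Gagliardo seminorm stays finite and controlled follows by splitting the double integral defining $[Eu]_{W^{s,p}(\mathbb R)}$ according to the positions of the two integration variables relative to the endpoints and changing variables back into $I$ in each piece, using in the across-an-endpoint pieces that an even reflection does not decrease $|x-y|$. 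This elementary construction is available for every $s\in(0,1)$.

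For the \emph{subcritical case} $p<1/s$, the heart of the matter is the fractional Sobolev inequality on the line,
\[
\|v\|_{L^{p^*}(\mathbb R)}\le C\,[v]_{W^{s,p}(\mathbb R)},\qquad p^*:=\frac{p}{1-sp},
\]
for $v\in W^{s,p}(\mathbb R)$. After the standard reduction to $v$ bounded with bounded support (truncation in value and a space cut-off, neither of which increases the Gagliardo seminorm beyond controllable errors), I would prove it by the dyadic decomposition of the level sets $\{|v|>2^N\}$ of Di Nezza--Palatucci--Valdinoci, exploiting that a large value of $|v(x)|$ against a small value of $|v(y)|$ forces $|v(x)-v(y)|$ to be large and feeds this back into the Gagliardo double integral; alternatively one may deduce it from the Hardy--Littlewood--Sobolev inequality (via the Besov/Riesz-potential description of $W^{s,p}$). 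Composing with the extension gives $\|u\|_{L^{p^*}(I)}\le C\|u\|_{W^{s,p}(I)}$, and for $q<p^*$ the bound $\|u\|_{L^q(I)}\le|I|^{1/q-1/p^*}\|u\|_{L^{p^*}(I)}$ (Hölder on the bounded interval) completes the inequality. For the compactness when $q<p^*$, it suffices to prove that $W^{s,p}(I)$ embeds compactly into $L^p(I)$: using the Riesz--Fréchet--Kolmogorov criterion, a set bounded in $W^{s,p}(I)$ extends to one bounded in $W^{s,p}(\mathbb R)$, supported in a single fixed compact set, with $\int_{\mathbb R}|h|^{-1-sp}\|(Eu)(\cdot+h)-Eu\|_{L^p(\mathbb R)}^p\,dh$ uniformly bounded, from which equicontinuity of translations in $L^p$ follows; then for $q\le p$ one composes with the continuous embedding $L^p(I)\hookrightarrow L^q(I)$, and for $p<q<p^*$ one interpolates, $\|u\|_{L^q}\le\|u\|_{L^p}^{\theta}\|u\|_{L^{p^*}}^{1-\theta}$, using the uniform $L^{p^*}$ bound together with strong convergence in $L^p$.

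The \emph{critical case} $p=1/s$ I would deduce from the subcritical one via a scale comparison valid on a bounded interval: for any $s'\in(0,s)$ one has $|x-y|^{-1-s'p}\le(\operatorname{diam}I)^{(s-s')p}|x-y|^{-1-sp}$, hence $[u]_{W^{s',p}(I)}\le C[u]_{W^{s,p}(I)}$; since $p=1/s<1/s'$, the subcritical result with exponent $s'$ gives $W^{s,p}(I)\subset W^{s',p}(I)\hookrightarrow L^{q}(I)$ for every $q\le p/(1-s'p)$, and as $s'\nearrow s$ the exponent $p/(1-s'p)$ tends to $+\infty$, so the embedding holds for all $q\in[1,+\infty)$. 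For the \emph{supercritical case} $p>1/s$ I would prove the Morrey estimate by telescoping averages. Writing $\langle u\rangle_J:=|J|^{-1}\int_J u$ and, for a Lebesgue point $z\in I$, $r>0$ and $J_k:=(z-2^{-k}r,z+2^{-k}r)\cap I$ (whose length is comparable to $2^{-k}r$, using convexity of $I$), one has $|u(z)-\langle u\rangle_{J_0}|\le\sum_{k\ge0}\bigl|\langle u\rangle_{J_{k+1}}-\langle u\rangle_{J_k}\bigr|$ and
\[
\bigl|\langle u\rangle_{J_{k+1}}-\langle u\rangle_{J_k}\bigr|\le\frac{1}{|J_{k+1}|}\int_{J_{k+1}}|u(\xi)-\langle u\rangle_{J_k}|\,d\xi\le\frac{C}{|J_k|^2}\int_{J_k}\!\!\int_{J_k}|u(\xi)-u(\zeta)|\,d\xi\,d\zeta,
\]
which, after Hölder's inequality applied to $|u(\xi)-u(\zeta)|=\dfrac{|u(\xi)-u(\zeta)|}{|\xi-\zeta|^{(1+sp)/p}}\,|\xi-\zeta|^{(1+sp)/p}$, is bounded by $C(2^{-k}r)^{\,s-1/p}[u]_{W^{s,p}(J_k)}$. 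As $\alpha=s-1/p=(sp-1)/p>0$, summing the geometric series yields $|u(z)-\langle u\rangle_{J_0}|\le C\,r^{\alpha}[u]_{W^{s,p}(I)}$; comparing the averages over $(x-r,x+r)\cap I$ and $(y-r,y+r)\cap I$ through their overlap with $r=|x-y|$ then gives $|u(x)-u(y)|\le C|x-y|^{\alpha}[u]_{W^{s,p}(I)}$, so $u$ has an $\alpha$-Hölder continuous representative, and $\|u\|_{L^\infty(I)}\le|\langle u\rangle_I|+C(\operatorname{diam}I)^{\alpha}[u]_{W^{s,p}(I)}\le C\|u\|_{W^{s,p}(I)}$.

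The main obstacle is the global fractional Sobolev inequality of the first case: it is a genuine gain-of-integrability statement and, unlike the critical and Morrey cases (which reduce to a scale comparison and to a telescoping of averages once the extension is available), it cannot be obtained by such elementary means — one must exploit the Gagliardo seminorm at all scales simultaneously, via the dyadic/combinatorial argument or via Hardy--Littlewood--Sobolev. The remaining ingredients — the extension operator, the scale comparison for $s'<s$, the Riesz--Fréchet--Kolmogorov compactness, and the Morrey telescoping — are standard, if somewhat tedious.
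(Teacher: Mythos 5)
Your sketch is correct and follows essentially the route of the paper's own source: the paper does not prove Theorem \ref{embedding} but quotes it from Di Nezza--Palatucci--Valdinoci, whose proofs are exactly the ones you outline (extension by reflection and cut-off, the dyadic level-set argument for the subcritical fractional Sobolev inequality, a Riesz--Fr\'echet--Kolmogorov argument for compactness, lowering $s$ on a bounded interval for the critical case, and the telescoping-averages Campanato argument for the Morrey case). The only step stated a bit too quickly is the deduction of $L^p$-equicontinuity of translations from the uniform bound on $\int_{\mathbb R}|h|^{-1-sp}\|v(\cdot+h)-v\|_{L^p(\mathbb R)}^p\,dh$: this controls $\|v(\cdot+h)-v\|_{L^p}$ only on average in $h$ (giving $\int_{|h|\le\delta}\|v(\cdot+h)-v\|_{L^p}^p\,dh\le C\delta^{1+sp}$), so one should insert the standard intermediate step of comparing $v$ with a mollification $v\ast\eta_\delta$ via Jensen's inequality before invoking the compactness criterion.
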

The additional embedding result below is proved in \cite[Corollary 19]{MR1108473}. 
\begin{theorem}[Sobolev Embeddings - 2]
\label{thm:embedding-pq}
Let $s\geq r$, $p\leq q$ and $s-1/p\geq r-1/q$, with $0<r\leq s<1$, and $1\leq p\leq q\leq +\infty$. Then 
\be
W^{s,p}(I)\subset W^{r,q}(I),
\ee
and 
\bes
\abs{u}_{W^{r,q}(I)}\leq \frac{36}{rs}\abs{u}_{W^{s,p}(I)}.
\ees
\end{theorem}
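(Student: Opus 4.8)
The plan is to read the statement as a Sobolev-type embedding and to prove it by combining an elementary monotonicity observation with a pointwise oscillation estimate obtained by dyadic chaining. First I would dispose of the ``loss of smoothness'' half: since $\abs{I}=1$, every pair $x,y\in I$ satisfies $\abs{x-y}<1$, so $\abs{x-y}^{-(1+rp)}\le\abs{x-y}^{-(1+sp)}$ whenever $r\le s$, and integrating the Gagliardo kernels this already gives $\abs{u}_{W^{r,p}(I)}\le\abs{u}_{W^{s,p}(I)}$, with constant $1$. Putting $\widetilde s:=r+\tfrac1p-\tfrac1q$, the hypotheses force $r\le\widetilde s\le s<1$, so it suffices to prove an inequality of the same form between $W^{\widetilde s,p}(I)$ and $W^{r,q}(I)$, i.e.\ in the borderline case $\widetilde s-\tfrac1p=r-\tfrac1q$, where the entire smoothness surplus is traded for integrability.

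For this borderline case the plan is to establish, for a.e.\ $x\in I$ and every interval $J\ni x$ with $\abs{J}=\delta$, an oscillation bound of the form $\abs{u(x)-u_J}\le C\,s^{-1}\,\delta^{s}\,\Phi(x)$, where $u_J$ is the mean of $u$ over $J$ and $\Phi$ is a potential-type function built from $u$. I would obtain it by chaining along dyadic subintervals $J=B_0\supset B_1\supset\cdots$ with $x\in B_k$ and $\abs{B_k}=2^{-k}\delta$: writing $u(x)-u_J=\sum_{k\ge0}(u_{B_{k+1}}-u_{B_k})$ and estimating each increment by H\"older's inequality, $\abs{u_{B_{k+1}}-u_{B_k}}\le 2\,\abs{B_k}^{s-1/p}\,\abs{u}_{W^{s,p}(B_k)}$, one absorbs the factor $\abs{B_k}^{-1/p}$ into a local maximal-type function of the Gagliardo energy density of $u$ and sums the ensuing geometric series of ratio $2^{-s}$, which produces the factor $(1-2^{-s})^{-1}\sim s^{-1}$. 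Applying this with $J$ the interval spanned by $x,y\in I$ gives $\abs{u(x)-u(y)}\lesssim s^{-1}\abs{x-y}^{s}\bigl(\Phi(x)+\Phi(y)\bigr)$; plugging this into the double integral defining $\abs{u}_{W^{r,q}(I)}^q$ and performing the remaining integration over $\abs{x-y}$ (which converges precisely because $s>r$; the excluded case $s=r$ forces $p=q$ and is trivial) reduces matters to an $L^q$ estimate on $\Phi$, and running this computation with explicit numerical constants is what yields the stated factor $\tfrac{36}{rs}$ rather than an anonymous one.

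The genuinely hard point is precisely this $L^q$ estimate on $\Phi$: the natural potential coming out of the chaining has its $L^p$ norm controlled by $\abs{u}_{W^{\widetilde s,p}(I)}$, whereas the target demands $L^q$ with $q>p$, and supplying this \emph{is} the fractional Sobolev inequality. It is here that the hypothesis $s-\tfrac1p\ge r-\tfrac1q$ enters decisively: one either routes the chaining through a Riesz potential of order $s$ and invokes the Hardy--Littlewood--Sobolev inequality, or, in the range $sp\le1$ (where $u$ need not be bounded and no pointwise values are available), replaces the oscillation estimate by the layer-cake / dyadic-decomposition argument underlying the embeddings of Theorem \ref{embedding}. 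In the endpoint $q=+\infty$ the whole scheme degenerates to the Morrey embedding $W^{s,p}(I)\hookrightarrow\mathcal{C}^{0,r}(I)$ of part $(3)$ of Theorem \ref{embedding}. Everything else --- the chaining, the Fubini rearrangement, the elementary bounds on the geometric factors --- is routine, so I expect the integrability-gain step to be where the real work lies.
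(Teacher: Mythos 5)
A preliminary remark: the paper does not prove Theorem \ref{thm:embedding-pq} at all --- it is quoted verbatim from Simon \cite[Corollary 19]{MR1108473}, whose argument runs through explicit comparisons of Sobolev, Besov and Nikol\cprime ski\u{\i} seminorms on an interval (which is where the explicit constant $36/(rs)$ comes from). So there is no internal proof to measure yours against, and your chaining route is in any case a genuinely different one. The parts of your outline that you actually carry out are correct: the kernel comparison $\abs{x-y}^{-(1+rp)}\leq\abs{x-y}^{-(1+sp)}$ on $I$ giving monotonicity in the smoothness index with constant $1$, the reduction to the borderline case $\widetilde s-1/p=r-1/q$, and the H\"older estimate $\abs{u_{B_{k+1}}-u_{B_k}}\leq 2\abs{B_k}^{s-1/p}\abs{u}_{W^{s,p}(B_k)}$ all check out.

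The gap is that the single step carrying the entire content of the theorem is the one you defer. Declaring that the required $L^q$ bound on $\Phi$ ``is the fractional Sobolev inequality'' is essentially circular: Theorem \ref{embedding} only gives $W^{s,p}(I)\hookrightarrow L^q(I)$, not control of the Gagliardo $W^{r,q}$ \emph{seminorm}, and your $\Phi$ is a maximal function of an $L^1$ energy density, which a priori lies only in weak-$L^1$ --- so the passage to an honest $L^q$ bound is exactly where the work is and is not supplied. Two further concrete problems: (i) the pointwise chaining presupposes $s-1/p>0$ for the geometric series and for Lebesgue-point representatives, whereas in the only place the paper uses this theorem (Proposition \ref{compact_seminorm}, with $p=1+s_n(1-s_n)$) one has $sp<1$ throughout, so the ``layer-cake replacement'' you mention in passing is not a side case but the main one, and it is left entirely unspecified; (ii) the constant $36/(rs)$ is asserted rather than derived, and since your reduction passes through $\widetilde s\leq s$, the natural output of your scheme is a constant of the form $C/(r\widetilde s)\geq C/(rs)$, so even granting the analysis the stated constant does not obviously survive. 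As a proof of the qualitative inclusion with \emph{some} constant the strategy is salvageable with substantial extra work; as a proof of the statement as written it is incomplete.
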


The next inequality is a special case of \cite[Theorem 1]{bourgain.brezis.mironescu2} and \cite[Theorem 1]{mazia.shaposhnikova}.
\begin{theorem}[Poincar\'e Inequality]\label{Poincare_TV}
Let $p\geq 1$, and let $sp<1$. There exists a constant $C>0$ such that 
\be
\norm{u-\int_{I}u(x)\,dx}^p_{L^{\tfrac{p}{1-sp}}(I)}\leq \frac{Cs(1-s)}{(1-sp)^{p-1}}\abs{u}^p_{W^{s,p}(I)}.
\ee
\end{theorem}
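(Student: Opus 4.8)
The plan is to deduce this from the sharp fractional Sobolev inequalities on $\R$ of Bourgain--Brezis--Mironescu \cite{bourgain.brezis.mironescu2} and Maz'ya--Shaposhnikova \cite{mazia.shaposhnikova} by an extension argument, after first disposing of the additive-constant ambiguity. Since the Gagliardo seminorm annihilates constants, $\abs{u-\bar u}_{W^{s,p}(I)}=\abs{u}_{W^{s,p}(I)}$ with $\bar u:=\int_I u(x)\,dx$, so it suffices to prove the homogeneous bound $\norm{v}_{L^{p/(1-sp)}(I)}^p\leq \tfrac{Cs(1-s)}{(1-sp)^{p-1}}\abs{v}_{W^{s,p}(I)}^p$ for zero-mean $v\in W^{s,p}(I)$, the crucial requirement being that $C=C(p)$ does not depend on $s$. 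First I would record the elementary zero-mean Poincar\'e estimate at exponent $p$: by Jensen's inequality and $|I|=1$, $|v(x)|^p\leq\int_I|v(x)-v(y)|^p\,dy$, and since $|x-y|\leq 1$ on $I\times I$ this gives $\norm{v}_{L^p(I)}^p\leq\abs{v}_{W^{s,p}(I)}^p$ with constant $1$, uniformly in $s$; this will absorb the lower-order terms generated by the extension.

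Next I would pass to the line, where \cite{bourgain.brezis.mironescu2, mazia.shaposhnikova} furnish, for $sp<1$ and $w\in W^{s,p}(\R)$, the inequality $\norm{w}_{L^{p/(1-sp)}(\R)}^p\leq \tfrac{Cs(1-s)}{(1-sp)^{p-1}}\abs{w}_{W^{s,p}(\R)}^p$ with $C=C(p)$. One extends $v$ by even reflection across the two endpoints of $I$ and multiplies by a fixed compactly supported Lipschitz cutoff $\chi$ equal to $1$ on $I$; a direct computation shows that reflection multiplies the Gagliardo seminorm by at most a universal constant --- the only point being elementary inequalities such as $(x+z)^{-1-sp}\leq |x-z|^{-1-sp}$ for the reflected cross-terms --- while the cutoff contributes a further term bounded by $\big(\int_{|t|\leq R}|t|^{p(1-s)-1}\,dt\big)\norm{v}_{L^p(I)}^p$, which by the previous step is controlled by $\abs{v}_{W^{s,p}(I)}^p$. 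Applying the inequality on $\R$ to this extension and restricting back to $I$ yields the statement.

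The hard part is keeping all constants uniform in $s$ up to both ends of its range. If $p>1$ the constraint $sp<1$ keeps $s$ bounded away from $1$, so $1-s$ and $\int_{|t|\leq R}|t|^{p(1-s)-1}\,dt=2R^{p(1-s)}/(p(1-s))$ are bounded by functions of $p$ alone, and the only degeneracy --- as $sp\nearrow 1$ --- is exactly what the weight $(1-sp)^{1-p}$ on the right-hand side compensates; here the scheme closes directly. The delicate regime is $p=1$ with $s\nearrow 1$, where the crude cutoff bound loses a factor $(1-s)^{-1}$ and spoils the sharp weight. There I would split cases: for $s$ bounded away from $1$ proceed as above, and for $s$ near $1$ use instead $\norm{v}_{L^{1/(1-s)}(I)}\leq\norm{v}_{L^\infty(I)}\leq |Dv|(I)$ (one-dimensional $BV$) together with the sharp comparison of $(1-s)\abs{v}_{W^{s,1}(I)}$ with $|Dv|(I)$ coming from the limiting behaviour of $W^{s,1}$-seminorms as $s\nearrow 1$ (Bourgain--Brezis--Mironescu, D\'avila). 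A self-contained alternative, bypassing the extension, is a dyadic decomposition of $I\times I$ near the diagonal with the $s$-dependence tracked shell by shell; the bookkeeping there is again most delicate precisely at $sp=1$.
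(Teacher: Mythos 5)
First, note that the paper offers no proof of this statement at all --- it is imported verbatim as ``a special case'' of \cite[Theorem 1]{bourgain.brezis.mironescu2} and \cite[Theorem 1]{mazia.shaposhnikova} --- so your attempt can only be measured against those references. Your reduction (subtract the mean, reflect and cut off to pass to $\R$, apply the sharp inequality there, absorb the errors) is the standard route and is sound for $s$ ranging in a compact subset of $(0,1]$; your $BV$-based patch for $p=1$, $s\nearrow 1$ is also reasonable. The fatal issue is the endpoint you declare harmless, namely $s\searrow 0$: with the factor $s(1-s)$ on the right-hand side and a constant $C=C(p)$ independent of $s$, the inequality is \emph{false} on a bounded interval. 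Take $p=1$ and $v(x)=x-\tfrac12$: then $\norm{v}_{L^{1/(1-s)}(I)}\geq\norm{v}_{L^1(I)}=\tfrac14$, while $\abs{v}_{W^{s,1}(I)}=\int_I\int_I|x-y|^{-s}\,dx\,dy=\tfrac{2}{(1-s)(2-s)}$, so the right-hand side equals $\tfrac{2Cs}{2-s}\to 0$ as $s\searrow 0$. No $s$-uniform constant can exist.

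Your own computation shows exactly where this bites. In the cutoff commutator you keep only the near-diagonal piece $\big(\int_{|t|\leq R}|t|^{p(1-s)-1}\,dt\big)\norm{v}^p_{L^p(I)}$ and silently drop the far-field piece $\big(\int_{|t|>R}|t|^{-1-sp}\,dt\big)\norm{v}^p_{L^p(I)}\sim (sp)^{-1}\norm{v}^p_{L^p(I)}$. That $1/s$ is unavoidable: every compactly supported $w\in L^p(\R)$ satisfies $\abs{w}^p_{W^{s,p}(\R)}\gtrsim s^{-1}\norm{w}^p_{L^p(\R)}$, and it is precisely this long-range tail that produces the extra factor $s$ in the Maz'ya--Shaposhnikova inequality on $\R$; it has no counterpart on a bounded domain with mean-zero normalization. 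After multiplying by the prefactor $s(1-s)$ you are left with $(1-s)\norm{v}^p_{L^p(I)}$, which your Jensen step only bounds by $(1-s)\abs{v}^p_{W^{s,p}(I)}$ --- larger than the target $\tfrac{s(1-s)}{(1-sp)^{p-1}}\abs{v}^p_{W^{s,p}(I)}$ by a factor $1/s$. The correct statements are: on $I$ with the mean subtracted, the Bourgain--Brezis--Mironescu form with factor $\tfrac{1-s}{(1-sp)^{p-1}}$ and $C=C(p,s_0)$ valid for $s\geq s_0>0$; the factor $s(1-s)$ belongs only to the full-space version. So either prove the theorem for fixed $(s,p)$ --- in which case the sharp prefactor is vacuous and a routine compactness argument suffices --- or, if uniformity as $s\searrow 0$ is genuinely needed (as it is in some of the paper's later applications with $s_n\to 0$), flag that the statement itself must be amended; in neither case does the scheme ``close directly'' away from $p=1$, $s\nearrow 1$.
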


It is possible to construct a continuous extension operator from $W^{s,1}(I)$ to $W^{s,1}(\R)$ (see,e.g., \cite[Theorem 5.4]{dinezza.palatucci.valdinoci}).
\begin{theorem}[Extension Operator]
\label{extension}
Let $s\in (0,1)$, and let $1\leq p<+\infty$. Then $W^{s,p}(I)$ is continuously embedded in $W^{s,p}(\R)$, namely there exists a constant $C=C(p,s)$ such that for every $u\in W^{s,p}(I)$ there exists $\tilde{u}\in W^{s,p}(\R)$ satisfying $\tilde{u}|_{I}=u$ and
\be\norm{\tilde{u}}_{W^{s,p}(\R)}\leq C\norm{u}_{W^{s,p}(I)}.\ee
\end{theorem}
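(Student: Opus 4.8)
The plan is to exploit the elementary structure of $I=(0,1)$ and to construct the extension explicitly, by a double reflection across the two endpoints followed by a smooth truncation. Given $u\in W^{s,p}(I)$, I would first define a function $u_1$ on the enlarged interval $(-1,2)$ by setting $u_1:=u$ on $(0,1)$, $u_1(x):=u(-x)$ for $x\in(-1,0)$, and $u_1(x):=u(2-x)$ for $x\in(1,2)$; then I would fix a cutoff $\eta\in C_c^\infty((-1,2))$ with $0\le\eta\le 1$, $\eta\equiv 1$ on $[0,1]$, and $\delta:=\operatorname{dist}(\operatorname{supp}\eta,\partial(-1,2))>0$, and set $\tilde u:=\eta u_1$ on $(-1,2)$, $\tilde u:=0$ elsewhere. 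Since $\eta\equiv 1$ on $[0,1]\supset I$ and $u_1|_I=u$, we immediately get $\tilde u|_I=u$, so it remains only to bound $\norm{\tilde u}_{W^{s,p}(\R)}$ by $C(p,s)\norm{u}_{W^{s,p}(I)}$.

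The $L^p$-part is trivial: a change of variables in each reflected copy gives $\norm{\tilde u}_{L^p(\R)}\le\norm{u_1}_{L^p(-1,2)}=3^{1/p}\norm{u}_{L^p(I)}$. The core of the argument is the bound on $\abs{u_1}_{W^{s,p}(-1,2)}$, for which I would decompose $(-1,2)\times(-1,2)$ into the nine products of the subintervals $(-1,0),(0,1),(1,2)$. On each of these nine blocks, both values of $u_1$ appearing in the difference quotient are of the form $u$ evaluated at points of $(0,1)$ (the reflected arguments), and the decisive observation is the elementary inequality that the distance of these two reflected arguments inside $(0,1)$ never exceeds the distance of the original two points inside $(-1,2)$ --- for instance, if $x\in(0,1)$ and $y\in(-1,0)$ the relevant $(0,1)$-distance is $\abs{x-(-y)}=\abs{x+y}\le\abs{x-y}$, and similarly, with a short case distinction, for the blocks involving the reflection across $1$ and for the $(-1,0)$--$(1,2)$ block. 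Hence $\abs{x-y}^{-(1+sp)}$ is dominated by the corresponding reflected-distance weight, and after the obvious change of variables each block integral is at most $\abs{u}_{W^{s,p}(I)}^p$; summing the nine contributions yields $\abs{u_1}_{W^{s,p}(-1,2)}^p\le 9\,\abs{u}_{W^{s,p}(I)}^p$.

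Next I would pass from $u_1$ to $\eta u_1$ and then to the zero extension. From $\abs{\eta(x)u_1(x)-\eta(y)u_1(y)}\le\norm{\eta}_{L^\infty}\abs{u_1(x)-u_1(y)}+\abs{u_1(y)}\,\abs{\eta(x)-\eta(y)}$ and $\abs{a+b}^p\le 2^{p-1}(\abs{a}^p+\abs{b}^p)$, the first term is controlled by $\abs{u_1}_{W^{s,p}(-1,2)}$, while for the second I would use $\abs{\eta(x)-\eta(y)}\le\norm{\eta'}_{L^\infty}\abs{x-y}$ together with the fact that $\sup_{y\in(-1,2)}\int_{-1}^2\abs{x-y}^{p(1-s)-1}\,dx<+\infty$, which holds because $p(1-s)-1>-1$; this bounds the second term by $C(p,s)\norm{u_1}_{L^p(-1,2)}$. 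Finally, splitting $\R\times\R$ shows that the only new contribution of the zero extension comes from pairs with $x\in\operatorname{supp}\eta$ and $y\notin(-1,2)$ (or vice versa), and there $\abs{x-y}\ge\delta$, so that this piece is at most $\norm{\tilde u}_{L^p(\R)}^p\int_{\abs{z}\ge\delta}\abs{z}^{-(1+sp)}\,dz<+\infty$. Collecting the three estimates gives $\norm{\tilde u}_{W^{s,p}(\R)}\le C(p,s)\norm{u}_{W^{s,p}(I)}$, as claimed.

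The main obstacle is the treatment of the six off-diagonal blocks in the estimate of $\abs{u_1}_{W^{s,p}(-1,2)}$: in each case one must verify the elementary but not fully automatic inequality that the reflection does not increase the distance to points beyond the relevant endpoint, and then perform the correct change of variables so that the block integral is recognized as a copy of $\abs{u}_{W^{s,p}(I)}^p$. Everything else --- the truncation and the zero extension --- is routine and relies only on $I$ being bounded and on $sp>0$ and $p(1-s)>0$; since the statement only requires the constant to depend on $p$ and $s$, the possible degeneration of these auxiliary constants as $sp\to 0$ or $p(1-s)\to 0$ causes no difficulty here.
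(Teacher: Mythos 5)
Your argument is correct and is essentially the standard reflection-plus-cutoff proof of the extension theorem that the paper simply quotes from \cite[Theorem 5.4]{dinezza.palatucci.valdinoci}, specialized to the interval $I=(0,1)$: the nine-block decomposition with the distance-non-increasing property of the two reflections, the Lipschitz estimate for the cutoff term using $p(1-s)-1>-1$, and the $|x-y|\geq\delta$ bound for the zero extension are exactly the ingredients of that proof. No gaps.
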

The next two theorems (\cite[Section $2.2.2$, Remark 3, and Section $2.11.2$]{MR3024598}) yield an identification between fractional Sobolev spaces and Besov spaces in $\R$, and guarantee the reflexivity of Besov spaces $B^{s}_{p,q}$ for $p,q$ finite.
\begin{theorem}[Identification with Besov spaces]\label{frac_sobo_to_bes}
If $1\leq p<+\infty$ and $s\in\R^+\setminus\mathbb N$, then 
\be
W^{s,p}(\R)=B^s_{p,p}(\R)
\ee
\end{theorem}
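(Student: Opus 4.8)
The plan is to reduce the claim to the range $0<s<1$, and then to recognize the Gagliardo seminorm as an equivalent form of the first-difference description of the Besov seminorm of $B^{s}_{p,p}(\R)$.

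\emph{Step 1: reduction to $0<s<1$.} Write $s=m+\sigma$ with $m=\lfloor s\rfloor\in\N$ and $\sigma\in(0,1)$. On the Slobodeckij side, $W^{s,p}(\R)$ is the set of $u\in L^p(\R)$ whose distributional derivatives $u',\dots,u^{(m)}$ belong to $L^p(\R)$ with $u^{(m)}\in W^{\sigma,p}(\R)$, the norm being equivalent to $\sum_{j=0}^{m}\|u^{(j)}\|_{L^p(\R)}+|u^{(m)}|_{W^{\sigma,p}(\R)}$. On the Besov side, the standard lifting property under differentiation (obtained by differentiating the Littlewood--Paley pieces and using Bernstein's inequality) gives, for $q=p$, the equivalence $\|u\|_{B^{s}_{p,p}(\R)}\sim\sum_{j=0}^{m}\|u^{(j)}\|_{L^p(\R)}+\|u^{(m)}\|_{B^{\sigma}_{p,p}(\R)}$. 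Hence, granting the result for $0<s<1$ applied to $u^{(m)}$, the descriptions of $W^{s,p}(\R)$ and $B^{s}_{p,p}(\R)$ reduce to the same condition on $u$ and yield equivalent norms, so it suffices to treat $0<s<1$.

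\emph{Step 2: the case $0<s<1$.} Here I would invoke the description of $B^{s}_{p,p}(\R)$ through the $L^{p}$-modulus of continuity: there are constants $0<c\le C$, depending only on $p,s$, with
\[
c\,\|u\|_{B^{s}_{p,p}(\R)}^{p}\;\le\;\|u\|_{L^{p}(\R)}^{p}+\int_{\R}\frac{\|u(\cdot+h)-u\|_{L^{p}(\R)}^{p}}{|h|^{1+sp}}\,dh\;\le\;C\,\|u\|_{B^{s}_{p,p}(\R)}^{p}.
\]
By Fubini's theorem and the substitution $y=x+h$,
\[
\int_{\R}\frac{\|u(\cdot+h)-u\|_{L^{p}(\R)}^{p}}{|h|^{1+sp}}\,dh=\int_{\R}\int_{\R}\frac{|u(x+h)-u(x)|^{p}}{|h|^{1+sp}}\,dx\,dh=\int_{\R}\int_{\R}\frac{|u(y)-u(x)|^{p}}{|y-x|^{1+sp}}\,dy\,dx=|u|_{W^{s,p}(\R)}^{p}.
\]
Combining the two displays yields $c\,\|u\|_{B^{s}_{p,p}(\R)}\le \|u\|_{L^{p}(\R)}+|u|_{W^{s,p}(\R)}\le C'\,\|u\|_{B^{s}_{p,p}(\R)}$, so the two spaces coincide with equivalent norms. (If the Besov seminorm is written instead with the $h$-integral restricted to $\{|h|\le1\}$, the tail $\{|h|>1\}$ is controlled by $2^{p}\|u\|_{L^{p}(\R)}^{p}\int_{\{|h|>1\}}|h|^{-1-sp}\,dh<+\infty$ and absorbed into $\|u\|_{L^{p}(\R)}^{p}$.)

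\emph{Main obstacle.} The substantive inputs are the two equivalences quoted above: the identification of the Littlewood--Paley (Fourier-analytic) definition of $B^{s}_{p,p}(\R)$ with its difference/modulus-of-continuity description used in Step 2, and the lifting property used in Step 1. These are classical but genuinely harmonic-analytic — they rest on Littlewood--Paley decompositions, Bernstein- and Nikol'skii-type inequalities, and a discretization of the defining integral — and proving them from scratch would constitute the bulk of the argument. For this reason we do not reproduce them and simply cite \cite{MR3024598}; once the difference-based description of $B^{s}_{p,p}$ is available, the identification with $W^{s,p}$ is, as in Step 2, an elementary change of variables.
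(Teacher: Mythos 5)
The paper offers no proof of this statement: it is quoted verbatim from Triebel's monograph (the reference \cite{MR3024598}, Section 2.2.2, Remark 3, and Section 2.11.2), which is exactly the source you yourself fall back on for the two harmonic-analytic equivalences underpinning your Steps 1 and 2. Your outline --- reduction to $0<s<1$ by the lifting property, followed by the Fubini/change-of-variables identification of the Gagliardo seminorm with the first-difference Besov seminorm in the case $q=p$ --- is the standard and correct route, so your proposal is consistent with (and more detailed than) the paper's treatment.
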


\begin{theorem}[Reflexivity of Besov spaces]\label{Besov_refl}
Let $-\infty <s<+\infty$, $1\leq p<+\infty$ and $0<q<+\infty$. Then
\be
(B^s_{p,q}(\R))'=B^{-s}_{p',q'}(\R),
\ee
where $(B^s_{p,q}(\R))'$ is the dual of the Besov space $B^s_{p,q}(\R)$, and where $p'$ and $q'$ are the conjugate exponent of $p$ and $q$, respectively.
\end{theorem}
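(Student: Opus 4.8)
The plan is to realise $B^s_{p,q}(\R)$ as a complemented subspace of a weighted sequence space by means of a Littlewood--Paley decomposition, to compute the dual of that sequence space, and then to identify $\big(B^s_{p,q}(\R)\big)'$ by verifying that every bounded functional on $B^s_{p,q}(\R)$ is given by integration against an element of $B^{-s}_{p',q'}(\R)$, and conversely.

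First I would fix the machinery. Choose a smooth dyadic resolution of unity $\{\varphi_j\}_{j\geq 0}$ on the Fourier side, with $\varphi_0$ supported in a ball, $\varphi_j(\xi)=\varphi_1(2^{1-j}\xi)$ for $j\geq 1$, $\sum_{j\geq 0}\varphi_j\equiv 1$, and each $\varphi_j$ real and even, and set $\Delta_j f:=\mathcal F^{-1}(\varphi_j\widehat f)$ (these are self-adjoint convolution operators); fix also $\widetilde\varphi_j\equiv 1$ on $\operatorname{supp}\varphi_j$ with slightly larger support, so that $\widetilde\varphi_j\varphi_j=\varphi_j$, with associated $\widetilde\Delta_j$. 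By the definition of the Besov norm, $Sf:=(\Delta_j f)_{j\geq 0}$ maps $B^s_{p,q}(\R)$ isometrically into the weighted sequence space $\ell^q_s(L^p):=\big\{(g_j):\ \sum_j 2^{jsq}\|g_j\|_{L^p}^q<+\infty\big\}$ (with the supremum modification when $q\leq 1$), while $R(g_j):=\sum_j\widetilde\Delta_j g_j$ is bounded from $\ell^q_s(L^p)$ to $B^s_{p,q}(\R)$, since each summand has dyadically localised Fourier support (so $\Delta_k R(g_j)=\sum_{|k-j|\leq 1}\Delta_k\widetilde\Delta_j g_j$) and the Littlewood--Paley pieces are uniformly $L^p$-bounded; moreover $\widetilde\Delta_j\Delta_j=\Delta_j$ and $\sum_j\Delta_j=\operatorname{id}$ give $RS=\operatorname{id}$ on $B^s_{p,q}(\R)$. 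Thus $P:=SR$ is a bounded projection on $\ell^q_s(L^p)$ and $B^s_{p,q}(\R)\cong P\big(\ell^q_s(L^p)\big)$ is complemented; I would also record that, since $p,q<+\infty$, the Schwartz class $\mathcal S(\R)$ is dense in $B^s_{p,q}(\R)$ (this is where the finiteness of $q$ enters).

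Next comes the duality itself. Since $1\leq p<+\infty$ we have $(L^p)'=L^{p'}$ (with $p'=+\infty$ when $p=1$), and the dual of a weighted $\ell^q$-sum of Banach spaces is the corresponding weighted $\ell^{q'}$-sum, with $q'=+\infty$ when $q\leq 1$; pairing $(g_j)$ with $(h_j)$ by $\sum_j\int_\R g_j h_j\,dx$ identifies $\big(\ell^q_s(L^p)\big)'=\ell^{q'}_{-s}(L^{p'})$. Given $\phi\in\big(B^s_{p,q}(\R)\big)'$, I would write $\phi=(\phi\circ R)\circ S$, so that $\phi\circ R\in\big(\ell^q_s(L^p)\big)'$ corresponds to some $(h_j)\in\ell^{q'}_{-s}(L^{p'})$ and
\bes
\phi(f)=\sum_j\int_\R\Delta_j f\cdot h_j\,dx=\int_\R f\cdot g\,dx,\qquad g:=\sum_j\Delta_j h_j
\ees
(the series converging at least in $\mathcal S'(\R)$), by self-adjointness of the $\Delta_j$; since $\Delta_k g=\sum_{|k-j|\leq 1}\Delta_k\Delta_j h_j$, uniform $L^{p'}$-boundedness of the $\Delta_k$ and a finitely-banded shift estimate on weighted $\ell^{q'}$ give $\|g\|_{B^{-s}_{p',q'}(\R)}\lesssim\|(h_j)\|_{\ell^{q'}_{-s}(L^{p'})}<+\infty$. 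Conversely, for $g\in B^{-s}_{p',q'}(\R)$ the pairing $f\mapsto\int_\R fg\,dx$ — read, off $\mathcal S(\R)$, as $\sum_{|i-j|\leq 1}\int_\R\Delta_i f\,\Delta_j g\,dx$ — is bounded on $B^s_{p,q}(\R)$ by Hölder in $x$ ($L^p$–$L^{p'}$) followed by Hölder in $j$ ($\ell^q$–$\ell^{q'}$); and, $\mathcal S(\R)$ being dense, it vanishes identically only if $g=0$. Hence $g\mapsto\int_\R(\,\cdot\,)\,g\,dx$ is the asserted isomorphism $B^{-s}_{p',q'}(\R)\xrightarrow{\ \sim\ }\big(B^s_{p,q}(\R)\big)'$, extending the $L^2(\R)$ pairing of Schwartz functions.

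The step requiring the most care is the quasi-Banach range $0<q<1$ (and the endpoint $p=1$), where the Hahn--Banach theorem is unavailable in its usual form: there the duality must be obtained first on the sequence-space side, where $(\ell^q)'=\ell^\infty$ is elementary and the fibres $L^p$ with $p\geq 1$ are genuine Banach spaces, and only then transported to $B^s_{p,q}(\R)$ through the bounded maps $S$ and $R$, neither of which requires local convexity, with the conventions $q'=+\infty$ for $q\leq 1$ and $p'=+\infty$ for $p=1$ used throughout. I note that no restriction on $s\in\R$ is needed, since the weight merely passes from $2^{js}$ to $2^{-js}$ regardless of sign; and everything else — uniform $L^p$-boundedness of the Littlewood--Paley pieces, the discrete shift estimates, and the density of $\mathcal S(\R)$ — is routine.
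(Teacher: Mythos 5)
The paper does not prove this statement: it is quoted verbatim from Triebel's \emph{Theory of Function Spaces} (Section 2.11.2), and your argument is essentially the proof given there --- the retraction/coretraction $S,R$ onto the weighted sequence space $\ell^q_s(L^p)$ via a dyadic resolution of unity, the elementary duality $(\ell^q_s(L^p))'=\ell^{q'}_{-s}(L^{p'})$ (with $q'=\infty$ for $q\le 1$, avoiding Hahn--Banach in the quasi-Banach range), and transport back through $RS=\mathrm{id}$. The sketch is correct; the only blemish is the parenthetical ``supremum modification when $q\leq 1$'' attached to the \emph{primal} space $\ell^q_s(L^p)$, which should apply only to the dual side, but this does not affect the argument.
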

In view of Theorems \ref{frac_sobo_to_bes} and \ref{Besov_refl} the following characterization holds true.
\begin{corollary}[Reflexivity of fractional Sobolev spaces]\label{sobo_frac_ref}
Let $1< p<+\infty$ and $s\in\R^+\setminus \mathbb N$. Then the fractional Sobolev space $W^{s,p}(\R)$ is reflexive.
\end{corollary}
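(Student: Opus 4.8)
The plan is to combine Theorem \ref{frac_sobo_to_bes} with Theorem \ref{Besov_refl}. Since $s\in\R^+\setminus\mathbb N$ and $1<p<+\infty$, Theorem \ref{frac_sobo_to_bes} identifies $W^{s,p}(\R)$ with the Besov space $B^s_{p,p}(\R)$, so it suffices to show that $B^s_{p,p}(\R)$ is reflexive. First I would apply Theorem \ref{Besov_refl} with $q=p$ (which is admissible since $1<p<+\infty$ gives $0<q<+\infty$): this yields
\be
(B^s_{p,p}(\R))'=B^{-s}_{p',p'}(\R),
\ee
where $p'$ is the conjugate exponent of $p$, and $1<p'<+\infty$ as well. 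Next, since $-s\in\R\setminus\mathbb N$ is again in the admissible range $(-\infty,+\infty)$ and $1\leq p'<+\infty$, $0<p'<+\infty$, I can apply Theorem \ref{Besov_refl} a second time, now to $B^{-s}_{p',p'}(\R)$, obtaining
\be
(B^{-s}_{p',p'}(\R))'=B^{s}_{p,p}(\R),
\ee
using that the conjugate exponent of $p'$ is $p$. Composing these two identities gives $(B^s_{p,p}(\R))''=B^s_{p,p}(\R)$, i.e.\ the canonical embedding of $B^s_{p,p}(\R)$ into its bidual is onto, which is precisely reflexivity.

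The only subtlety, and the step I would be most careful about, is making sure that the duality identifications in Theorem \ref{Besov_refl} are compatible with the canonical embedding into the bidual — that is, that iterating the two dualities actually recovers the natural pairing rather than merely producing an abstract isomorphism between $B^s_{p,p}(\R)$ and its bidual. In the Besov-space setting this compatibility is standard (the pairing is the extension of the $L^2$ inner product on Schwartz functions, which is symmetric and consistent across the scale), so invoking Theorem \ref{Besov_refl} as stated is enough. Finally, transporting reflexivity back through the isometric (or at least topological) isomorphism of Theorem \ref{frac_sobo_to_bes} shows that $W^{s,p}(\R)$ is reflexive, since reflexivity is preserved under isomorphisms of Banach spaces.
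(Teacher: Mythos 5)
Your proposal is correct and follows exactly the route the paper intends: the paper gives no separate proof of Corollary \ref{sobo_frac_ref}, stating only that it holds ``in view of Theorems \ref{frac_sobo_to_bes} and \ref{Besov_refl}'', and your double application of the duality theorem (together with the remark on compatibility of the identifications with the canonical embedding into the bidual) is precisely the standard way to flesh that out.
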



We conclude this section by recalling two theorems describing the limit behavior of the Gagliardo seminorm as $s\nearrow 1$ and $s\searrow 0$, respectively. The first result has been proved in \cite[Theorem 3 and Remark 1]{bourgain.brezis.mironescu}, and \cite[Theorem 1]{davila}.
\begin{theorem}[Asymptotic behavior as $s\nearrow 1$]\label{approx_frac_der1}
Let $u\in BV(I)$. Then
\begin{align*}
 \lim_{s\nearrow 1} (1-s)\abs{u}_{W^{s,1}(I)}=|u'|_{{\mathcal{M}_b(I)}}.
\end{align*} 
\end{theorem}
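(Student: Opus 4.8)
Since we are already in dimension one, the plan is to avoid any slicing or extension machinery and to reduce the Gagliardo seminorm directly to the monotone family of $L^1$–difference quotients of $u$. Writing $y=x+h$ and using the symmetry of the integrand, one gets, for every $s\in(0,1)$,
\[
\abs{u}_{W^{s,1}(I)}=2\int_0^1 h^{-1-s}\,\phi(h)\,dh,\qquad \phi(h):=\int_0^{1-h}\abs{u(x+h)-u(x)}\,dx .
\]
The proof then rests on the behaviour of $\phi$ near $h=0$ and on the competition between the normalizing factor $(1-s)$ and the singular weight $h^{-1-s}$, which concentrates the mass of the integral at small $h$.

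I would next isolate two elementary properties of $\phi$. First, using the precise representative of $u$ together with Fubini, $\abs{u(x+h)-u(x)}\le\abs{u'}([x,x+h])$ for a.e.\ $x$, so that $\phi(h)\le h\,\abs{u'}_{\mathcal{M}_b(I)}$ for every $h\in(0,1)$; in particular $\abs{u}_{W^{s,1}(I)}\le\tfrac{2}{1-s}\abs{u'}_{\mathcal{M}_b(I)}<+\infty$, which makes the rescaled quantity well defined. Second, the classical characterization of functions of bounded variation through $L^1$–difference quotients (see, e.g., \cite{leoni} or \cite[Section 3.2]{ambrosio.fusco.pallara}) gives $\lim_{h\searrow0}\phi(h)/h=\abs{u'}_{\mathcal{M}_b(I)}$, where the right–hand side is the total variation of $u$ on the \emph{open} interval $I$.

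Granting these, the limit follows by a short computation. Since $\phi(h)\le h\,\abs{u'}_{\mathcal{M}_b(I)}$ and $\int_0^1 h^{-s}\,dh=\tfrac{1}{1-s}$, one has $(1-s)\abs{u}_{W^{s,1}(I)}\le 2\,\abs{u'}_{\mathcal{M}_b(I)}$ for every $s$, which controls the $\limsup$. For the $\liminf$, fix $\varepsilon>0$ and choose $\delta\in(0,1)$ with $\phi(h)\ge\bigl(\abs{u'}_{\mathcal{M}_b(I)}-\varepsilon\bigr)h$ on $(0,\delta)$; then
\[
(1-s)\,\abs{u}_{W^{s,1}(I)}\ge 2(1-s)\bigl(\abs{u'}_{\mathcal{M}_b(I)}-\varepsilon\bigr)\int_0^\delta h^{-s}\,dh=2\bigl(\abs{u'}_{\mathcal{M}_b(I)}-\varepsilon\bigr)\,\delta^{\,1-s}.
\]
Letting first $s\nearrow1$ (so that $\delta^{1-s}\to1$) and then $\varepsilon\searrow0$ yields $\liminf_{s\nearrow1}(1-s)\abs{u}_{W^{s,1}(I)}\ge 2\,\abs{u'}_{\mathcal{M}_b(I)}$, and combining the two bounds gives the asserted identity — up to the one–dimensional constant $K_{1,1}=2$, which, depending on the normalization adopted for $\abs{\cdot}_{W^{s,1}(I)}$ in \cite{bourgain.brezis.mironescu,davila}, enters the right–hand side.

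The only non–routine ingredient is the second property of $\phi$: one must know that the rescaled difference quotients actually \emph{converge} to the full total variation $\abs{u'}_{\mathcal{M}_b(I)}$, not merely remain bounded by it, and with the correct open–interval boundary term. This is where the $BV$ assumption is genuinely used, and it is the step that in higher dimensions would require slicing or an extension operator; in one dimension it is standard, so the only point to be careful about is the Fubini bookkeeping when passing between the closed interval $[x,x+h]$ and the open interval $I$ in the presence of the atomic (jump) part of $u'$.
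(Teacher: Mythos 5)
Your argument is correct, and it takes a genuinely different route from the paper: the paper does not prove Theorem \ref{approx_frac_der1} at all, it imports it from Bourgain--Brezis--Mironescu and D\'avila, whose arguments treat general dimension $N$ (radial mollifiers $\rho_\varepsilon$, slicing, and a lower-semicontinuity/compactness step). Your reduction to the one-dimensional difference quotient $\phi(h)=\int_0^{1-h}\abs{u(x+h)-u(x)}\,dx$, combined with the two elementary facts $\phi(h)\le h\,\abs{u'}_{\mathcal{M}_b(I)}$ and $\phi(h)/h\to\abs{u'}_{\mathcal{M}_b(I)}$ as $h\searrow 0$, is exactly the right substitute in dimension one: the kernel $(1-s)h^{-1-s}\,dh$ concentrates at $h=0$, and your $\limsup$/$\liminf$ bookkeeping with $\int_0^\delta h^{-s}\,dh$ is sound. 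The Fubini estimate for $\phi(h)\le h\abs{u'}_{\mathcal{M}_b(I)}$ and the lower semicontinuity of the total variation along the averaged difference quotients are both standard and correctly invoked. What the citation buys the paper is generality in $N$; what your proof buys is transparency, self-containedness, and --- crucially --- the exact constant.

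On that last point you are right to be uneasy, and you should not bury it in a parenthetical. With the seminorm as defined in \eqref{tv_k_method} (a full double integral over $I\times I$), your computation yields
\[
\lim_{s\nearrow 1}(1-s)\abs{u}_{W^{s,1}(I)}=2\,\abs{u'}_{\mathcal{M}_b(I)},
\]
as one checks directly on $u(x)=x$, for which $(1-s)\abs{u}_{W^{s,1}(I)}=2/(2-s)\to 2$ while $\abs{u'}_{\mathcal{M}_b(I)}=1$. So the identity as stated in Theorem \ref{approx_frac_der1} holds only up to the factor $K_{1,1}=2$; note that the companion Theorem \ref{approx_frac_der2} does retain its dimensional constant ($4=2\,\abs{S^0}$), so the two statements are normalized inconsistently. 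This is a slip in the statement rather than a gap in your proof, but it is not purely cosmetic: Theorem \ref{approx_frac_der1} is used quantitatively in Proposition \ref{prop-int} and in the proof of Theorem \ref{intermediate}, where the missing factor $2$ would propagate into the $\limsup$ bound unless the seminorm (or the statement) is renormalized.
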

Similarly, the asymptotic behavior of the Gagliardo seminorm has been characterized as $s\searrow 0$ in \cite[Theorem 3]{mazia.shaposhnikova}.
\begin{theorem}[Asymptotic behavior as $s\searrow 0$]\label{approx_frac_der2}
Let $u\in\cup_{0<s<1}W^{s,1}(\R)$. Then,
\bes
\lim_{s\searrow 0} s\abs{u}_{W^{s,1}(\R)}=4\norm{u}_{L^1(\R)}.
\ees
\end{theorem}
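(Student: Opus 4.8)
The plan is to reduce the double Gagliardo integral to a one--dimensional integral in the translation variable and then to isolate the contribution of large translations, which is where the limit comes from. Writing $h=y-x$ and using Fubini's theorem, for $u\in W^{s,1}(\R)$ one has
\[
\abs{u}_{W^{s,1}(\R)}=\int_{\R}\frac{\phi(h)}{\abs{h}^{1+s}}\,dh,\qquad \phi(h):=\int_{\R}\abs{u(x)-u(x+h)}\,dx .
\]
The function $\phi$ is even, continuous, and satisfies $0\le\phi(h)\le 2\norm{u}_{L^1(\R)}$ for every $h$. The first key point is that $\phi(h)\to 2\norm{u}_{L^1(\R)}$ as $\abs{h}\to+\infty$. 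To see this I would approximate $u$ in $L^1(\R)$ by a compactly supported $v$ with $\norm{u-v}_{L^1(\R)}<\epsilon$: for $\abs{h}$ large the translates $v(\cdot)$ and $v(\cdot+h)$ have disjoint supports, so $\int_\R\min(\abs{v(x)},\abs{v(x+h)})\,dx=0$, and then the elementary bound $\abs{a-b}\ge\abs{a}+\abs{b}-2\min(\abs{a},\abs{b})$ (valid for all real $a,b$) together with $\int_\R\min(\abs{u(x)},\abs{u(x+h)})\,dx\le\int_\R\min(\abs{v(x)},\abs{v(x+h)})\,dx+2\norm{u-v}_{L^1(\R)}$ gives $\phi(h)\ge 2\norm{u}_{L^1(\R)}-2\epsilon$ for $\abs{h}$ large.

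For the lower bound, fix $\epsilon>0$ and choose $R>0$ with $\phi(h)\ge 2\norm{u}_{L^1(\R)}-\epsilon$ for $\abs{h}>R$. Since $s\int_{\abs{h}>R}\abs{h}^{-1-s}\,dh=2R^{-s}\to 2$ as $s\searrow0$, we obtain
\[
\liminf_{s\searrow0}\,s\,\abs{u}_{W^{s,1}(\R)}\ \ge\ \liminf_{s\searrow0}\,s\int_{\abs{h}>R}\frac{\phi(h)}{\abs{h}^{1+s}}\,dh\ \ge\ 2\bigl(2\norm{u}_{L^1(\R)}-\epsilon\bigr),
\]
and letting $\epsilon\searrow0$ yields $\liminf_{s\searrow0}s\abs{u}_{W^{s,1}(\R)}\ge 4\norm{u}_{L^1(\R)}$. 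For the upper bound I would split the integral at $\abs{h}=R$ with an arbitrary fixed $R\ge 1$. The outer part is controlled by $\phi\le 2\norm{u}_{L^1(\R)}$, giving $s\int_{\abs{h}>R}\phi(h)\abs{h}^{-1-s}\,dh\le 4R^{-s}\norm{u}_{L^1(\R)}\to 4\norm{u}_{L^1(\R)}$. For the inner part I would use the standing hypothesis that $u\in W^{s_0,1}(\R)$ for some $s_0\in(0,1)$: for $s\le s_0$,
\[
\int_{\abs{h}\le R}\frac{\phi(h)}{\abs{h}^{1+s}}\,dh\ \le\ \int_{\abs{h}\le 1}\frac{\phi(h)}{\abs{h}^{1+s_0}}\,dh+\int_{1\le\abs{h}\le R}\phi(h)\,dh\ \le\ \abs{u}_{W^{s_0,1}(\R)}+4R\norm{u}_{L^1(\R)},
\]
which is bounded uniformly in $s\in(0,s_0]$; hence $s$ times the inner part vanishes as $s\searrow0$. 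Combining, $\limsup_{s\searrow0}s\abs{u}_{W^{s,1}(\R)}\le 4\norm{u}_{L^1(\R)}$, and with the lower bound this proves $\lim_{s\searrow0}s\abs{u}_{W^{s,1}(\R)}=4\norm{u}_{L^1(\R)}$.

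The only genuinely non-elementary ingredient --- and the step I expect to be the main obstacle --- is the asymptotic orthogonality $\phi(h)\to 2\norm{u}_{L^1(\R)}$ as $\abs{h}\to+\infty$; the rest is bookkeeping built on the single scaling identity $s\int_{\abs{h}>R}\abs{h}^{-1-s}\,dh=2R^{-s}$ together with the uniform-in-$s$ control near the singularity $h=0$ afforded by the assumption $u\in\bigcup_{0<s<1}W^{s,1}(\R)$ (this is exactly where the hypothesis is used, and it is needed: the conclusion fails for generic $u\in L^1(\R)$).
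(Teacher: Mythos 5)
Your proof is correct. Note first that the paper does not prove this statement at all: Theorem \ref{approx_frac_der2} is quoted verbatim from \cite[Theorem 3]{mazia.shaposhnikova}, so there is no in-paper argument to compare against; what you have written is a self-contained proof of the one-dimensional, $p=1$ case of the Maz'ya--Shaposhnikova theorem. Your reduction to $s\int_\R \phi(h)|h|^{-1-s}\,dh$ with $\phi(h)=\int_\R|u(x)-u(x+h)|\,dx$, the asymptotic $\phi(h)\to 2\norm{u}_{L^1(\R)}$ (via $|a-b|\ge |a|+|b|-2\min(|a|,|b|)$ and approximation by compactly supported functions), the identity $s\int_{|h|>R}|h|^{-1-s}\,dh=2R^{-s}$, and the use of $u\in W^{s_0,1}(\R)$ to control $\int_{|h|\le 1}\phi(h)|h|^{-1-s}\,dh$ uniformly for $s\le s_0$ are all sound, and the hypothesis $u\in\cup_{0<s<1}W^{s,1}(\R)$ is used exactly where it is indispensable (the result is indeed false for generic $u\in L^1(\R)$, as Maz'ya and Shaposhnikova observe). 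The only blemish is a harmless constant: your chain gives $\phi(h)\ge 2\norm{u}_{L^1(\R)}-4\epsilon$ rather than $-2\epsilon$, since $\phi(h)\ge 2\norm{u}_{L^1(\R)}-2\int_\R\min(|u(x)|,|u(x+h)|)\,dx$ and the $\min$-integral is bounded by $2\epsilon$; as $\epsilon$ is arbitrary this changes nothing. Compared with the cited source, which treats $W^{s,p}(\R^n)$ for all $p\ge 1$ and must work with $|a-b|^p$, your argument exploits the linearity available at $p=1$ to make the ``asymptotic orthogonality of distant translates'' step elementary; this buys a short, transparent proof of exactly the case the paper needs, at the cost of not generalizing directly to $p>1$ or to the $W^{s,1+s(1-s)}$ seminorms that appear elsewhere in the paper.
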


\section{The Fractional order $TGV$ seminorms}\label{sec_frac_TGV}
Let $r\in(1,+\infty)\setminus \N$ be given. In this section we define the \emph{fractional $r$-order total generalized variation} ($TGV^r$) seminorms, and we prove some first properties.
\begin{define}[The $TGV^r$ seminorms]\label{TGV_fractional}
Let $0<s<1$, $k\in\mathbb N$ be such that $k+s=r$, and let $\alpha=(\alpha_0,\alpha_1,\alpha_2,\ldots, \alpha_{k})\in\R_+^{k+1}$. 
For every $u\in L^1(I)$, we define its fractional $TGV^{k+s}$ seminorm as follows.

\begin{enumerate}[C{a}se 1.] 
\item
for $k=1$
\begin{multline}
\abs{u}_{TGV_{\alpha}^{1+s}(I)}:=\inf\flp{\alpha_0\abs{u'- sv_0}_{\mathcal{M}_b(I)}+\alpha_1{s(1-s)}\abs{v_0}_{W^{s,1+s(1-s)}(I)}\right.\\
\left.+\alpha_0 s(1-s)\abs{\int_I v_0(x)\,dx}\AAA:\,v_0\in W^{s,1+s(1-s)}(I)}.
\end{multline}
\item
for $k>1$
\begin{align*}
\abs{u}_{TGV_{\alpha}^{k+s}(I)}:=\inf&\flp{\alpha_0\abs{u'- v_0}_{\mathcal{M}_b(I)}+\alpha_1\abs{v_0'-v_1}_{{\mathcal{M}_b(I)}}+\right.\\
&\left. \cdots+\alpha_{k-1}\abs{v_{k-2}'-sv_{k-1}}_{{\mathcal{M}_b(I)}}+\alpha_{k}{s(1-s)}\abs{v_{k-1}}_{W^{s,1+s(1-s)}(I)}\right.\\
&\left. +\alpha_{k-1} s(1-s)\abs{\int_I v_{k-1}(x)\,dx}:\right.\\
&\left. v_i\in BV(I)\text{ for }0\leq i\leq k-2,\,\,v_{k-1}\in W^{s,1+s(1-s)}(I).\AAA}
\end{align*}
\end{enumerate}
Moreover, we say that $u$ belongs to the space of \emph{functions with bounded total generalized variation}, and we write $u\in BGV_{\alpha}^{k+s}(I)$ if
\be
\norm{u}_{BGV_{\alpha}^{k+s}(I)}:=\norm{u}_{L^1(I)}+\abs{u}_{TGV_{\alpha}^{k+s}(I)}<+\infty,
\ee
where $0\leq s<1$, $k\in\mathbb N$, $\alpha=(\alpha_0,\alpha_1,\alpha_2,\ldots, \alpha_{k})\in\R_+^{k+1}$. Additionally, we write $u\in BGV^{k+s}(I)$ if there exists $\alpha\in \R_+^{k+1}$ such that $u\in BGV_{\alpha}^{k+s}(I)$. Note that if $u\in BGV_{\alpha}^{k+s}(I)$ for some $\alpha\in \R_+^{k+1}$, then $u\in BGV_{\beta}^{k+s}(I)$ for every $\beta \in \R_+^{k+1}$.
\end{define}
We observe that the $TGV^{k+s}$ seminorm is actually ``intermediate" between the $TGV^k$ seminorm and the $TGV^{k+1}$ seminorm. To be precise, we have the following identification.
\begin{theorem}[Asymptotic behavior of the fractional $TGV$ seminorm-1]\label{intermediate}
For every $u\in BV(I)$, up to the extraction of a (non-relabeled) subsequence there holds 
\be
\lim_{s\nearrow 1}\abs{u}_{TGV^{1+s}_{\alpha}(I)}= \abs{u}_{TGV^2_{\alpha}(I)}\text{ and }\lim_{s\searrow 0}\abs{u}_{TGV^{1+s}_{\alpha}(I)}=\alpha_0\abs{u'}_{\mathcal{M}_b(I)}.
\ee
\end{theorem}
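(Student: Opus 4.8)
The plan is to analyze the two limits separately, in each case combining compactness for the competitors $v_0$ with the asymptotic results for the Gagliardo seminorm (Theorems \ref{approx_frac_der1} and \ref{approx_frac_der2}). Throughout, for fixed $u\in BV(I)$ I will exploit that $v_0\equiv 0$ is always an admissible competitor, so that $\abs{u}_{TGV^{1+s}_{\alpha}(I)}\leq \alpha_0\abs{u'}_{\mathcal{M}_b(I)}$ for every $s$; in particular the family $\{\abs{u}_{TGV^{1+s}_{\alpha}(I)}\}_s$ is bounded and one may freely pass to subsequences.

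\textbf{The limit $s\searrow 0$.} Here I would first prove the $\liminf$ inequality $\liminf_{s\searrow 0}\abs{u}_{TGV^{1+s}_{\alpha}(I)}\geq \alpha_0\abs{u'}_{\mathcal{M}_b(I)}$, and combine it with the trivial $\limsup$ bound above. Pick, for each $s$, a near-optimal competitor $v_0=v_0^s\in W^{s,1+s(1-s)}(I)$; by the boundedness just noted, $\alpha_0\abs{u'-sv_0^s}_{\mathcal{M}_b(I)}$, $\alpha_1 s(1-s)\abs{v_0^s}_{W^{s,1+s(1-s)}(I)}$ and $\alpha_0 s(1-s)\big|\int_I v_0^s\big|$ are all bounded. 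The key point is to control $\|s v_0^s\|_{L^1(I)}$: using the Poincaré inequality of Theorem \ref{Poincare_TV} (with $p=1+s(1-s)$, noting $sp<1$ for small $s$) to bound $\|v_0^s-\int_I v_0^s\|_{L^{p/(1-sp)}}$ by $C s(1-s)\abs{v_0^s}_{W^{s,p}}$, together with the bound on $\big|\int_I v_0^s\big|$, one gets $\|s v_0^s\|_{L^1(I)}\to 0$. Hence $s v_0^s\to 0$ strongly in $L^1(I)$, so $u'-sv_0^s\to u'$ (tested against continuous functions, $\int \varphi\, s v_0^s\to0$), and by lower semicontinuity of the total variation under weak* convergence of measures, $\liminf_s \alpha_0\abs{u'-sv_0^s}_{\mathcal{M}_b(I)}\geq \alpha_0\abs{u'}_{\mathcal{M}_b(I)}$. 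Since the remaining two terms are nonnegative this gives the $\liminf$ bound.

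\textbf{The limit $s\nearrow 1$.} I would prove $\limsup_{s\nearrow 1}\abs{u}_{TGV^{1+s}_{\alpha}(I)}\leq \abs{u}_{TGV^2_{\alpha}(I)}$ and $\liminf_{s\nearrow 1}\abs{u}_{TGV^{1+s}_{\alpha}(I)}\geq \abs{u}_{TGV^2_{\alpha}(I)}$. For the $\limsup$: fix a near-optimal competitor $v_0\in BV(I)$ for $\abs{u}_{TGV^2_{\alpha}(I)}$; since $BV(I)\subset W^{s,1}(I)\subset W^{s,1+s(1-s)}(I)$ (Theorem \ref{thm:embedding-pq}, as $s\nearrow1$ the second exponent $\to1$) it is admissible for $\abs{u}_{TGV^{1+s}_{\alpha}(I)}$; the first term tends to $\alpha_0\abs{u'-v_0}_{\mathcal{M}_b(I)}$ because $sv_0\to v_0$ in $L^1$, the third term $\alpha_0 s(1-s)\big|\int_I v_0\big|\to0$, and for the middle term one estimates $s(1-s)\abs{v_0}_{W^{s,1+s(1-s)}(I)}$ and shows its $\limsup$ is $\leq\abs{v_0'}_{\mathcal{M}_b(I)}$ — here one uses Theorem \ref{thm:embedding-pq} to pass from integrability $1+s(1-s)$ down to integrability $1$ (the constant $36/(rs)$ stays bounded) and then Theorem \ref{approx_frac_der1}, $(1-s)\abs{v_0}_{W^{s,1}(I)}\to\abs{v_0'}_{\mathcal{M}_b(I)}$; the extra factor $s\to1$ is harmless. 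For the $\liminf$: take near-optimal $v_0^s\in W^{s,1+s(1-s)}(I)$; from boundedness, $s(1-s)\abs{v_0^s}_{W^{s,1+s(1-s)}(I)}$ is bounded, and by Theorem \ref{thm:embedding-pq} this controls $(1-s)\abs{v_0^s}_{W^{s,1}(I)}$; combined with an $L^1$ bound on $v_0^s$ (again via Poincaré, Theorem \ref{Poincare_TV}, controlling $\|v_0^s-\int_I v_0^s\|$, plus boundedness of $\big|\int_I v_0^s\big|$ coming from the third term) one obtains a uniform bound on $\|v_0^s\|_{W^{s,1}(I)}$, hence via the extension operator (Theorem \ref{extension}) and compact embedding of $W^{s,1}$ into $L^1$ one extracts (this is where the subsequence in the statement is used) $v_0^s\to v_0$ strongly in $L^1(I)$ with $v_0\in BV(I)$ and $\abs{v_0'}_{\mathcal{M}_b(I)}\leq \liminf_s (1-s)\abs{v_0^s}_{W^{s,1}(I)}$ by a $\Gamma$-liminf-type statement (this is the lower-semicontinuity half of Theorem \ref{approx_frac_der1}, available e.g. in the references cited there). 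Then $u'-sv_0^s\wtos u'-v_0$ as measures, lower semicontinuity of total variation gives the bound on the first term, and since $v_0$ is an admissible competitor for $\abs{u}_{TGV^2_{\alpha}(I)}$ one concludes.

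\textbf{Main obstacle.} The delicate point is the $\liminf$ as $s\nearrow1$: extracting a limit $v_0$ of the near-optimal competitors $v_0^s$ requires a uniform $W^{s,1}$-bound that is \emph{not} directly given (the seminorm we control is $s(1-s)\abs{v_0^s}_{W^{s,1+s(1-s)}}$, at a higher, $s$-dependent integrability), so one must carefully chain Theorem \ref{thm:embedding-pq} (checking the hypotheses $s\geq r$, $p\leq q$, $s-1/p\geq r-1/q$ hold in the limit and the constant stays bounded), the Poincaré inequality, and the extension/compactness machinery; and then one needs the lower-semicontinuity companion to Theorem \ref{approx_frac_der1} along this extracted sequence, which is why the statement is only asserted up to subsequences. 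The other estimates are routine once $sv_0^s\to v_0$ (resp. $sv_0^s\to0$) strongly in $L^1$ is established.
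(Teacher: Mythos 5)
Your overall architecture (splitting each limit into a $\limsup$ and a $\liminf$ bound, extracting compactness for near-optimal competitors, and invoking the Bourgain--Brezis--Mironescu asymptotics) matches the paper's, but two of your key steps fail as stated. For $s\searrow 0$, the claim that $\norm{sv_0^s}_{L^1(I)}\to 0$ is false: the available bounds only give $s(1-s)\abs{\int_I v_0^s(x)\,dx}\le C$, so $s\int_I v_0^s(x)\,dx$ is bounded but need not vanish. Concretely, for $u(x)=x$ the constant competitors $v_0^s\equiv 1/s$ give $u'-sv_0^s=0$, vanishing Gagliardo seminorm, and total energy $\alpha_0(1-s)\to\alpha_0\abs{u'}_{\mathcal{M}_b(I)}$, hence are asymptotically optimal, yet $sv_0^s\equiv 1$; along this sequence the first term alone has $\liminf$ equal to $0$, so discarding the remaining terms as merely nonnegative destroys the inequality. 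The paper instead extracts $sv_0^s\to C$ in $L^1(I)$ for some constant $C$ and uses precisely the third term, $\alpha_0 s(1-s)\abs{\int_I v_0^s(x)\,dx}\to\alpha_0\abs{C}$, combined with $\abs{u'-C}_{\mathcal{M}_b(I)}+\abs{C}\ge\abs{u'}_{\mathcal{M}_b(I)}$; this is the raison d'\^etre of that term in Definition \ref{TGV_fractional}.

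For $s\nearrow 1$, the $\limsup$ cannot be run with a general $v_0\in BV(I)$: for a jump function such as $v_0=\chi_{(1/2,1)}$ one computes $\abs{v_0}_{W^{s,1+s(1-s)}(I)}^{1+s(1-s)}\sim \big(1-s(1+s(1-s))\big)^{-1}=\big((1-s)^2(1+s)\big)^{-1}$, so that $s(1-s)\abs{v_0}_{W^{s,1+s(1-s)}(I)}\sim C(1-s)^{-1}\to+\infty$. The missing idea is to first approximate $v_0$ strictly in $BV(I)$ by Lipschitz maps $v_0^k$ and then use the elementary interpolation of Proposition \ref{prop-int}, which exploits $\abs{v_0^k(x)-v_0^k(y)}\le L\abs{x-y}^s$ to bound $\abs{v_0^k}_{W^{s,1+s(1-s)}(I)}^{1+s(1-s)}$ by $L^{s(1-s)}\abs{v_0^k}_{W^{s,1}(I)}$. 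Your proposed substitute via Theorem \ref{thm:embedding-pq} does not work in either direction: that theorem raises integrability only at the cost of smoothness, the minimal $s'$ with $W^{s',1}(I)\hookrightarrow W^{s,1+s(1-s)}(I)$ satisfies $1-s'\sim 2(1-s)^2$ so the resulting prefactor $(1-s)/(1-s')$ blows up, and it cannot be used at all to lower the integrability from $1+s(1-s)$ to $1$ in the $\liminf$ step (it requires $p\le q$); there the paper uses a direct H\"older computation (Step 3 of Proposition \ref{compact_seminorm}) to pass to $W^{s/(2-s),1}(I)$. Finally, in that $\liminf$ step the boundedness of $\int_I v_0^s(x)\,dx$ cannot come from the third term (whose weight $s(1-s)\to 0$ only yields $\abs{\int_I v_0^s(x)\,dx}\le C/(1-s)$); the paper obtains it from the first term via $s\norm{v_0^s}_{L^1(I)}\le\abs{u'}_{\mathcal{M}_b(I)}+\abs{u'-sv_0^s}_{\mathcal{M}_b(I)}$.
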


Before proving Theorem \ref{intermediate} we state and prove an intermediate result that will be crucial in determining the asymptotic behavior of the $TGV^{1+s}$ seminorm as $s\nearrow 1$.
\begin{proposition}
\label{prop-int}
Let $u\in W^{1,\infty}(I)$. Then 
\be
\limsup_{s\nearrow 1} (1-s)\abs{u}_{W^{s,1+s(1-s)}(I)}\leq |u'|_{\mathcal{M}_b(I)}.
\ee
\end{proposition}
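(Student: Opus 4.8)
The plan is to compare the Gagliardo seminorm $|u|_{W^{s,1+s(1-s)}(I)}$ with the more standard $|u|_{W^{s,1}(I)}$ and then invoke Theorem~\ref{approx_frac_der1}, which gives the desired limit for the $p=1$ seminorm. Set $p(s):=1+s(1-s)$, so that $p(s)\to 1$ as $s\nearrow 1$ and $p(s)>1$ for $s\in(0,1)$. Since $u\in W^{1,\infty}(I)\subset W^{1,1}(I)\subset BV(I)$, the right-hand side $|u'|_{\mathcal M_b(I)}$ is finite, and in fact $|u'|_{\mathcal M_b(I)}=\|u'\|_{L^1(I)}\le \|u'\|_{L^\infty(I)}$.

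The first main step is a pointwise estimate on the Gagliardo integrand. For $x,y\in I$ with $x\ne y$ we have $|u(x)-u(y)|\le \|u'\|_{L^\infty(I)}|x-y|$, hence
\[
\frac{|u(x)-u(y)|^{p(s)}}{|x-y|^{1+sp(s)}}
=\frac{|u(x)-u(y)|}{|x-y|^{1+s}}\cdot\frac{|u(x)-u(y)|^{p(s)-1}}{|x-y|^{s(p(s)-1)}}
\le \frac{|u(x)-u(y)|}{|x-y|^{1+s}}\,\big(\|u'\|_{L^\infty(I)}\,|x-y|^{1-s}\big)^{p(s)-1}.
\]
On $I\times I$ one has $|x-y|\le 1$, so $|x-y|^{1-s}\le 1$; thus the bracketed factor is bounded by $\max\{1,\|u'\|_{L^\infty(I)}\}^{p(s)-1}$, a quantity that tends to $1$ as $s\nearrow1$ because $p(s)-1=s(1-s)\to 0$. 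Integrating over $I\times I$ gives
\[
\abs{u}_{W^{s,p(s)}(I)}^{p(s)}\le \max\{1,\|u'\|_{L^\infty(I)}\}^{s(1-s)}\,\abs{u}_{W^{s,1}(I)}.
\]
Raising to the power $1/p(s)$, multiplying by $(1-s)$, and writing $(1-s)=(1-s)^{1-1/p(s)}\cdot\big((1-s)\abs{u}_{W^{s,1}(I)}\big)^{1/p(s)}\cdot \abs{u}_{W^{s,1}(I)}^{-1/p(s)}\cdot\abs{u}_{W^{s,1}(I)}^{1/p(s)}$—or, more cleanly, estimating directly—one obtains
\[
(1-s)\abs{u}_{W^{s,p(s)}(I)}\le \max\{1,\|u'\|_{L^\infty(I)}\}^{\frac{s(1-s)}{p(s)}}\,(1-s)^{1-\frac{1}{p(s)}}\,\big((1-s)\abs{u}_{W^{s,1}(I)}\big)^{\frac{1}{p(s)}}.
\]

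The second step is to pass to the $\limsup$ as $s\nearrow 1$. Each of the three $s$-dependent factors on the right is controlled: the base $\max\{1,\|u'\|_{L^\infty}\}$ is a fixed constant raised to the exponent $s(1-s)/p(s)\to 0$, so that factor tends to $1$; the exponent $1-1/p(s)=(p(s)-1)/p(s)=s(1-s)/p(s)\to 0$, and since $1-s\to 0$ with a vanishing exponent, $(1-s)^{1-1/p(s)}\to 1$ as well (here one uses $t^{\varepsilon_t}\to1$ when $t\to 0^+$ and $\varepsilon_t\to0^+$, which holds because $\varepsilon_t\log(1/t)\to 0$ for $\varepsilon_t$ going to zero fast enough—this needs the explicit rate $\varepsilon_t=s(1-s)/p(s)$, which is $O(1-s)$, against $\log(1/(1-s))$, and $(1-s)\log(1/(1-s))\to0$); and the last factor converges, by Theorem~\ref{approx_frac_der1}, to $|u'|_{\mathcal M_b(I)}^{1/p(s)}\to |u'|_{\mathcal M_b(I)}$ since $1/p(s)\to 1$. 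Multiplying the three limits yields
\[
\limsup_{s\nearrow 1}(1-s)\abs{u}_{W^{s,1+s(1-s)}(I)}\le |u'|_{\mathcal M_b(I)},
\]
which is the assertion.

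I expect the only genuinely delicate point to be verifying $(1-s)^{1-1/p(s)}\to 1$: it rests on the decay rate $1-1/p(s)=s(1-s)/p(s)\sim (1-s)$ beating the logarithmic blow-up of $\log(1/(1-s))$, i.e.\ $(1-s)^{1-1/p(s)}=\exp\big(-(1-1/p(s))\log\tfrac1{1-s}\big)$ and $(1-s)\log\tfrac1{1-s}\to0$. An alternative, avoiding the interpolation exponent bookkeeping, is to fix any $q\in(1,p(s_0))$ for $s_0$ close to $1$, use Theorem~\ref{thm:embedding-pq} (or a direct Hölder argument on $I\times I$) to dominate $\abs{u}_{W^{s,p(s)}(I)}$ by $\abs{u}_{W^{s,1}(I)}$ up to a factor bounded uniformly for $s$ near $1$, and then apply Theorem~\ref{approx_frac_der1}; the Hölder route above is the most self-contained.
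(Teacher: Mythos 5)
Your proof is correct and follows essentially the same route as the paper's: a pointwise estimate exploiting the Lipschitz bound to dominate the $W^{s,1+s(1-s)}$ integrand by $L^{s(1-s)}$ times the $W^{s,1}$ integrand, followed by taking the $p(s)$-th root, splitting off the factors $(1-s)^{1-1/p(s)}$ and $L^{s(1-s)/p(s)}$ that tend to $1$, and invoking Theorem \ref{approx_frac_der1} for the remaining term. The only cosmetic difference is that the paper keeps the exactly cancelling power $|x-y|^{s^2(1-s)}$ instead of bounding $|x-y|^{1-s}\le 1$, so no $\max\{1,\cdot\}$ is needed; both variants are equivalent.
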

\begin{proof}
Let $u\in W^{1,\infty}(I)$. Then there exists a constant $L>0$ such that
\be|u(x)-u(y)|\leq L|x-y|^s\ee
for every $x,y\in I$ and every $s\in (0,1)$. Thus
\begin{align*}
&\abs{u}^{1+s(1-s)}_{W^{s,1+s(1-s)}(I)}=\int_I\int_I \frac{|u(x)-u(y)|^{1+s(1-s)}}{|x-y|^{1+s(1+s(1-s))}}\,dx\,dy\\
&\quad\leq L^{s(1-s)}\int_I\int_I \frac{|x-y|^{s^2(1-s)}|u(x)-u(y)|}{|x-y|^{1+s(1+s(1-s))}}\,dx\,dy\\
&\quad=L^{s(1-s)}\int_I\int_I \frac{|u(x)-u(y)|}{|x-y|^{1+s}}\,dx\,dy=L^{s(1-s)}\abs{u}_{W^{s,1}(I)}.
\end{align*}
This implies that
\begin{align*}
&(1-s)\abs{u}_{W^{s,1+s(1-s)}(I)}\leq (1-s)L^{\frac{s(1-s)}{1+s(1-s)}}\abs{u}^{\frac{1}{1+s(1-s)}}_{W^{s,1}(I)}\\
&\quad=L^{\frac{s(1-s)}{1+s(1-s)}}\Big[(1-s)\abs{u}_{W^{s,1}(I)}\Big]^{\frac{1}{1+s(1-s)}}e^{\frac{s(1-s)\log{(1-s)}}{1+s(1-s)}}.
\end{align*}
Therefore, by Theorem \ref{approx_frac_der1} we conclude that
\[
\limsup_{s\nearrow 1}(1-s)\abs{u}_{W^{s,1+s(1-s)}(I)}\leq \limsup_{s\nearrow 1}\Big[(1-s)\abs{u}_{W^{s,1}(I)}\Big]^{\frac{1}{1+s(1-s)}}\leq \abs{u'}_{\mathcal{M}_b(I)}. \qedhere 
\]
\end{proof}

A crucial ingredient in the proof of Theorem \ref{intermediate} is a compactness and lower-semicontinuity result for maps with uniformly weighted averages and $W^{s,1+s(1-s)}$-seminorms.

\begin{proposition}\label{compact_seminorm}
Let $\{s_n\}\subset (0,1)$ be such that $s_n\to \bar{s}$, with $\bar{s}\in (0,1]$. For every $n\in \mathbb{N}$ let $v_n\in W^{s_n,1+s_n(1-s_n)}(I)$ be such that 
\be\label{total_s_bound}
\sup_{n\geq 1}s_n(1-s_n)\left\{\abs{v_n}_{W^{s_n,1+s_n(1-s_n)}(I)}+\Big|\int_I v_n(x)\,dx\Big|\right\}<+\infty.
\ee


Then, for $\bar{s}\in (0,1)$, there exists $\bar v\in W^{\bar s,1+\bar s(1-\bar s)}(I)$ such that, up to the extraction of a (non-relabeled) subsequence, 
\be
\label{eq:NUM1}
v_n\to \bar v\quad\text{strongly in }L^1(I),
\ee
and
\be\label{par_semi_lsc}
\liminf_{n\to\infty}s_n(1-s_n)\abs{v_n}_{W^{s_n,1+s_n(1-s_n)}(I)} \geq\bar{s}(1-\bar{s})\abs{\bar v}_{W^{\bar{s},1+\bs(1-\bs)}(I)}.
\ee

For $\bar{s}=1$, there exists $\bar v\in BV(I)$ such that, up to the extraction of a (non-relabeled) subsequence, 
\be
\label{eq:NUM3}
v_n-\int_I v_n(x)\,dx\to \bar v\quad\text{strongly in }L^1(I),
\ee

and
\be
\label{eq:NUM4}
\liminf_{n\to\infty}s_n(1-s_n)\abs{v_n}_{W^{s_n,1+s_n(1-s_n)}(I)} \geq \abs{\bar v'}_{\mathcal{M}_b(I)}.
\ee

\end{proposition}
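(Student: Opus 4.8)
The plan is to set $p_n:=1+s_n(1-s_n)$ (so that $p_n-1=s_n(1-s_n)$ and $p_n\to\bar p:=1+\bar s(1-\bar s)$) and to use throughout the algebraic identity $1-s_np_n=(1-s_n)^2(1+s_n)$, which in particular gives $s_np_n<1$ for every $s_n\in(0,1)$, so that the Poincar\'e inequality of Theorem \ref{Poincare_TV} applies with $(s,p)=(s_n,p_n)$. Two preliminary estimates will then be the workhorses. First, writing $w_n:=v_n-\int_I v_n$, for which $\int_I w_n=0$ and $\abs{w_n}_{W^{s_n,p_n}(I)}=\abs{v_n}_{W^{s_n,p_n}(I)}$, Theorem \ref{Poincare_TV} yields
\[
\norm{w_n}_{L^{q_n}(I)}^{p_n}\leq \frac{C\,s_n(1-s_n)}{(1-s_np_n)^{p_n-1}}\,\abs{v_n}^{p_n}_{W^{s_n,p_n}(I)},\qquad q_n:=\frac{p_n}{(1-s_n)^2(1+s_n)}\geq 1,
\]
and, after rewriting the right-hand side as $C\,(1-s_np_n)^{1-p_n}(s_n(1-s_n))^{1-p_n}\big(s_n(1-s_n)\abs{v_n}_{W^{s_n,p_n}(I)}\big)^{p_n}$ and observing that $(1-s_np_n)^{1-p_n}$ and $(s_n(1-s_n))^{1-p_n}$ stay bounded (they converge to positive constants if $\bar s\in(0,1)$ and to $1$ if $\bar s=1$, since $t^{ct}\to1$ as $t\to 0^+$), the hypothesis \eqref{total_s_bound} gives $\sup_n\norm{w_n}_{L^{q_n}(I)}<+\infty$, hence (as $|I|=1$, $q_n\geq1$) $\sup_n\norm{w_n}_{L^1(I)}<+\infty$; when $\bar s\in(0,1)$ one additionally gets $\sup_n\norm{v_n}_{L^1(I)}<+\infty$ from the bound on $\big|\int_I v_n\big|$ in \eqref{total_s_bound}. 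Secondly, for $0<\sigma<s<1$ and $p\geq1$, splitting the kernel $|x-y|^{-(1+\sigma)}=|x-y|^{-(1+sp)/p}\cdot|x-y|^{-(1/p'-(s-\sigma))}$ and using H\"older's inequality with exponents $p,p'$ in the Gagliardo double integral, I would obtain the comparison
\[
\abs{u}_{W^{\sigma,1}(I)}\leq \Big(\tfrac{2}{(s-\sigma)p'\,((s-\sigma)p'+1)}\Big)^{1/p'}\abs{u}_{W^{s,p}(I)},
\]
the underlying integral $\int_I\int_I|x-y|^{(s-\sigma)p'-1}\,dx\,dy$ being finite precisely because $\sigma<s$.

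For $\bar s\in(0,1)$ I would fix $\sigma\in(0,\bar s)$; since $p_n'\to\bar p'<+\infty$ and $s_n-\sigma\to\bar s-\sigma>0$, the constant in the H\"older comparison stays bounded, and because $s_n(1-s_n)\to\bar s(1-\bar s)>0$, \eqref{total_s_bound} gives $\sup_n\abs{v_n}_{W^{\sigma,1}(I)}<+\infty$. Together with the $L^1$-bound this makes $\{v_n\}$ bounded in $W^{\sigma,1}(I)$, so the compact embedding $W^{\sigma,1}(I)\hookrightarrow\hookrightarrow L^1(I)$ furnished by the first part of Theorem \ref{embedding} (applied with $p=1<1/\sigma$) produces a non-relabeled subsequence with $v_n\to\bar v$ strongly in $L^1(I)$, i.e.\ \eqref{eq:NUM1}. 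For \eqref{par_semi_lsc} I would pass to a further subsequence with $v_n\to\bar v$ a.e.\ on $I$, apply Fatou's lemma to $\int_I\int_I\tfrac{|v_n(x)-v_n(y)|^{p_n}}{|x-y|^{1+s_np_n}}\,dx\,dy$ — whose integrand converges a.e.\ on $I\times I$ to $\tfrac{|\bar v(x)-\bar v(y)|^{\bar p}}{|x-y|^{1+\bar s\bar p}}$ — to get $\abs{\bar v}^{\bar p}_{W^{\bar s,\bar p}(I)}\leq\liminf_n\abs{v_n}^{p_n}_{W^{s_n,p_n}(I)}$, and then remove the powers by choosing a subsequence along which $s_n(1-s_n)\abs{v_n}_{W^{s_n,p_n}(I)}$ tends to its $\liminf$ and using $p_n\to\bar p$, $s_n(1-s_n)\to\bar s(1-\bar s)>0$. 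This proves \eqref{par_semi_lsc}, and in particular $\abs{\bar v}_{W^{\bar s,\bar p}(I)}<+\infty$; Theorem \ref{Poincare_TV} applied to $\bar v$ (note $\bar s\bar p<1$) then gives $\bar v\in L^{\bar p}(I)$, so $\bar v\in W^{\bar s,1+\bar s(1-\bar s)}(I)$.

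The case $\bar s=1$ is where the main obstacle lies: now $s_n(1-s_n)\to 0$, so $\abs{v_n}_{W^{s_n,p_n}(I)}$ may diverge, and — more seriously — the integrability exponent $p_n>1$ makes the H\"older comparison above \emph{critical} at $\sigma=s$, so one genuinely \emph{cannot} bound an $L^1$-Gagliardo seminorm by $\abs{\cdot}_{W^{s_n,p_n}(I)}$ at the same order $s_n$. My remedy is to shift the order downward: set $\sigma_n:=s_n-(1-s_n)^2$, so that $0<\sigma_n<s_n$, $\sigma_n\nearrow1$, and $1-\sigma_n=(1-s_n)(2-s_n)$. The H\"older comparison then gives $\abs{w_n}_{W^{\sigma_n,1}(I)}\leq C_n\abs{w_n}_{W^{s_n,p_n}(I)}$ with $C_n=\big(\tfrac{2}{\beta_n(\beta_n+1)}\big)^{1/p_n'}$, $\beta_n:=(1-s_n)^2p_n'$, and the delicate bookkeeping is to verify $C_n\to1$ — which holds because $\beta_n\to0$ while $\tfrac1{p_n'}\log\tfrac{2}{\beta_n(\beta_n+1)}\to0$ (again using $t\log t\to0$). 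Consequently
\[
(1-\sigma_n)\abs{w_n}_{W^{\sigma_n,1}(I)}\leq C_n\,\frac{2-s_n}{s_n}\Big(s_n(1-s_n)\abs{v_n}_{W^{s_n,p_n}(I)}\Big),\qquad C_n\,\frac{2-s_n}{s_n}\to1,
\]
so \eqref{total_s_bound} and the $L^1$-bound of the first paragraph give $\sup_n(1-\sigma_n)\abs{w_n}_{W^{\sigma_n,1}(I)}<+\infty$ and $\sup_n\norm{w_n}_{L^1(I)}<+\infty$.

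To conclude, I would write $(1-\sigma_n)\abs{w_n}_{W^{\sigma_n,1}(I)}=\int_I\int_I\tfrac{|w_n(x)-w_n(y)|}{|x-y|}\,\rho_n(x-y)\,dx\,dy$ with the kernels $\rho_n(z):=(1-\sigma_n)|z|^{-\sigma_n}$ on $\{|z|<1\}$, which satisfy $\int_\R\rho_n=2$ and $\int_{|z|>\delta}\rho_n=2(1-\delta^{1-\sigma_n})\to0$ for every $\delta\in(0,1)$ as $\sigma_n\nearrow1$; thus $\{w_n\}$ falls within the scope of the Bourgain--Brezis--Mironescu compactness and lower-semicontinuity theorems for such concentrating kernels (see \cite{bourgain.brezis.mironescu} and \cite{davila}, with constant $1$ in one dimension, consistent with Theorem \ref{approx_frac_der1}): up to a non-relabeled subsequence, $w_n\to\bar v$ strongly in $L^1(I)$ with $\bar v\in BV(I)$, which is \eqref{eq:NUM3}, and
\[
\abs{\bar v'}_{\mathcal{M}_b(I)}\leq\liminf_n(1-\sigma_n)\abs{w_n}_{W^{\sigma_n,1}(I)}\leq\liminf_n C_n\,\frac{2-s_n}{s_n}\,s_n(1-s_n)\abs{v_n}_{W^{s_n,p_n}(I)}=\liminf_n s_n(1-s_n)\abs{v_n}_{W^{s_n,p_n}(I)},
\]
the last equality because $C_n\tfrac{2-s_n}{s_n}\to1$ and $s_n(1-s_n)\abs{v_n}_{W^{s_n,p_n}(I)}\geq 0$; this gives \eqref{eq:NUM4}. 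All subsequence extractions above are harmless since the statement is modulo a single non-relabeled subsequence, and the only genuinely non-routine points are the constant bookkeeping in the case $\bar s=1$ (ensuring $C_n\tfrac{2-s_n}{s_n}\to1$) and the invocation of the sequential compactness and semicontinuity of the Gagliardo seminorms $\abs{\cdot}_{W^{\sigma_n,1}(I)}$ along $\sigma_n\nearrow1$.
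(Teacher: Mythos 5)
Your proof is correct, and for the sub-case $\bar s\in(0,1)$ it takes a genuinely different route from the paper's. The paper distinguishes four cases according to the monotonicity of $s_n$ and of $s_n(1-s_n)$, invokes the Simon embedding (Theorem \ref{thm:embedding-pq}) to place $\{v_n\}$ in a \emph{fixed reflexive} fractional Sobolev space with integrability exponent $q>1$ (either $W^{\bar s,1+\bar s(1-\bar s)}(I)$ or an auxiliary $W^{\lambda\bar s,1+\hat s(1-\hat s)}(I)$), extracts a weakly convergent subsequence via the Besov-space reflexivity of Corollary \ref{sobo_frac_ref}, and then gets lower semicontinuity either from weak lower semicontinuity of the norm or from Fatou. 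Your single H\"older comparison $\abs{u}_{W^{\sigma,1}(I)}\leq\big(\tfrac{2}{\beta(\beta+1)}\big)^{1/p'}\abs{u}_{W^{s,p}(I)}$ with $\beta=(s-\sigma)p'$ lands the sequence directly in the non-reflexive space $W^{\sigma,1}(I)$ for one fixed $\sigma<\bar s$, where the compact embedding into $L^1(I)$ from Theorem \ref{embedding} already yields \eqref{eq:NUM1}, and Fatou along an a.e.-convergent sub-subsequence gives \eqref{par_semi_lsc} after the routine removal of the exponents $p_n\to\bar p$. This buys you a shorter argument that dispenses with the monotonicity case analysis and with the reflexivity/Besov machinery altogether, at the cost of having to track the H\"older and Poincar\'e constants explicitly. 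For $\bar s=1$ your argument coincides in spirit with the paper's Step 3 --- a H\"older reduction to an order-one Gagliardo seminorm of fractional order tending to $1$, followed by the Bourgain--Brezis--Mironescu compactness and semicontinuity theorem --- with a different auxiliary order ($\sigma_n=s_n-(1-s_n)^2$ in place of $t_n=s_n/(2-s_n)$) and a more careful verification that the accumulated constants $C_n\tfrac{2-s_n}{s_n}$ tend to $1$, which is precisely what prevents a spurious multiplicative factor from appearing in \eqref{eq:NUM4}.
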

\begin{proof}
We first observe that for $x$, $y\in I$, $1\leq p<+\infty$, and $s<t$, we have 
\be\abs{x-y}^{1+sp}> \abs{x-y}^{1+tp}.\ee
 Hence, in view of  \eqref{tv_k_method} there holds
\bes
\abs{u}_{W^{s,p}(I)}\leq\abs{u}_{W^{t,p}(I)}
\ees
for every $u\in W^{t,p}(I)$.\\

Without loss of generality (and up to the extraction of a non-relabeled subsequence) we can assume that the sequences $\{s_n\}$ and $\{s_n(1-s_n)\}$ converge monotonically to $\bs$ and $\bs(1-\bs)$, respectively. According to the value of $\bs$ only 4 situations can arise:

\begin{enumerate}[\underline{Case 1}:]
\item 
$\tfrac12\leq \bs<1$: $s_n\searrow \bar s$ and $s_n(1-s_n)\nearrow \bs(1-\bs)$;
\item[\underline{Case 2}:]
$0< \bs<\tfrac12$: $s_n\searrow \bar s$ and $s_n(1-s_n)\searrow \bs(1-\bs)$;
\item[\underline{Case 3}:]
$\tfrac12<\bs\leq 1$: $s_n\nearrow \bar s$ and $s_n(1-s_n)\searrow \bs(1-\bs)$;
\item[\underline{Case 4}:]
$0<\bs\leq \tfrac12$: $s_n\nearrow \bar s$ and $s_n(1-s_n)\nearrow \bs(1-\bs)$.
\end{enumerate}

\medskip

For convenience of the reader we subdivide the proof into three steps.\\

\noindent\textbf{Step 1}:
We first consider Case 1. By \eqref{total_s_bound} there exists a constant $C$ such that

\be
\label{eq:star-page8}
\sup_{n\geq 1}\left\{\abs{v_n}_{W^{s_n,1+s_n(1-s_n)}(I)}+\Big|\int_I v_n(x)\,dx\Big|\right\}\leq C.
\ee
We point out that the function $f:(0,1)\to \R$, defined as
\be\label{function_f_revised_use} 
f(x):=x-\frac{1}{1+x(1-x)}\quad\text{for every }x\in [0,1],
\ee
is strictly increasing on $[0,1]$. In particular, since $s_n\geq \bar{s}$, there holds $f(s_n)\geq f(\bar{s})$, namely
\be s_n-\frac{1}{1+s_n(1-s_n)}\geq \bar{s}-\frac{1}{1+\bar{s}(1-\bar{s})}.\ee

By applying Theorem \ref{thm:embedding-pq} with $s=s_n$, $r=\bs$, $p=1+s_n(1-s_n)$, and $q=1+\bs(1-\bs)$, we obtain that there exists a constant $C$ such that
\be
\label{need1}
\abs{v_n}_{W^{\bs,1+\bs(1-\bs)}(I)}\leq C \abs{v_n}_{W^{s_n,1+s_n(1-s_n)}(I)}
\ee
for every $n\in \N$. The uniform bound \eqref{eq:star-page8} yields then that there exists a constant $C$ such that
\be
\label{eq:star-star-page8}
\sup_{n\geq 1}\abs{v_n}_{W^{\bs,1+\bs(1-\bs)}(I)}\leq C.
\ee

In view of Theorem \ref{Poincare_TV}, Corollary \ref{sobo_frac_ref}, and estimates \eqref{eq:star-page8} and \eqref{eq:star-star-page8} there exists $\bar v\in W^{\bs,1+\bs(1-\bs)}(I)$ such that, up to the extraction of a (non-relabeled) subsequence, we have
\be
\label{eq:NUM2}
v_n\rightharpoonup \bar v\quad\text{weakly in }W^{\bs,1+\bs(1-\bs)}(I).
\ee
Since $\bs(1+\bs(1-\bs))<1$, and $1<\frac{1+\bs(1-\bs)}{1-\bs(1+\bs(1-\bs))}$, by Theorem \ref{embedding} (1.), the embedding of $W^{\bs,1+\bs(1-\bs)}(I)$ into $L^1(I)$ is compact. Property \eqref{eq:NUM1} follows then by \eqref{eq:NUM2}. \\

By the lower semicontinuity of the $W^{\bs,1+\bs(1-\bs)}(I)$ norm with respect to the weak convergence, and by \eqref{need1} we deduce the inequality
\begin{align*}
\bs(1-\bs)\abs{\bar v}_{W^{\bs,1+\bs(1-\bs)}(I)}&\leq\liminf_{n\to +\infty}\bs(1-\bs)\abs{v_n}_{W^{s_n,1+s_n(1-s_n)}(I)}\\
&= \liminf_{n\to +\infty}s_n(1-s_n)\abs{v_n}_{W^{s_n,1+s_n(1-s_n)}(I)}, 
\end{align*}
which in turn yields \eqref{par_semi_lsc}.\\\\
\textbf{Step 2}: Consider now Case 2. The function $g:(0,1)\to \R$, defined as
\be
g(x):=\frac{1}{1+x(1-x)}\quad\text{for every }x\in (0,1),
\ee
is strictly decreasing in $(0,\tfrac12]$. By the definition of the maps $f$ (defined in \eqref{function_f_revised_use}) and $g$ there holds
\be
g(\bs)+f(\bs)=\bs.
\ee
Therefore, by the monotonicity of $g$ in $(0,\tfrac12]$ there exist $\frac12>\hat{s}>\bar{s}$, and $\lambda\in (0,1)$, such that $g(\hat{s})+f(\bar{s})>\lambda \bar{s}$, namely
\be
\bar{s}-\frac{1}{1+\bar{s}(1-\bar{s})}>\lambda \bar{s}-\frac{1}{1+\hat{s}(1-\hat{s})}.
\ee
By the monotonicity of $f$ on $[0,1]$, and the fact that $s_n\geq \bar{s}$ for every $n\in \N$, we have
\be
s_n-\frac{1}{1+s_n(1-s_n)}=f(s_n)>f(\bs)=\bs-\frac{1}{1+\bs(1-\bs)}>\lambda \bar{s}-\frac{1}{1+\hat{s}(1-\hat{s})}.
\ee
By \eqref{total_s_bound} there exists a constant $C$ such that
\be
\label{eq:starpage9}
\sup_{n\geq 1}\left\{\abs{v_n}_{W^{s_n,1+s_n(1-s_n)}(I)}+\Big|\int_I v_n(x)\,dx\Big|\right\}\leq C.
\ee
Since $\frac12>\hat{s}>\bs$, and $s_n(1-s_n)\searrow \bs(1-\bs)$, there exists $n_0\in \N$ such that
\be
1+s_n(1-s_n)<1+\hat{s}(1-\hat{s})\quad\text{for every }n\geq n_0.
\ee
Hence, choosing $s=s_n$, $r=\lambda\bs$, $p=1+s_n(1-s_n)$, and $q=1+\hat{s}(1-\hat{s})$ in Theorem \ref{thm:embedding-pq}, we deduce that there exists a constant $C$ such that 
\be
\abs{v_n}_{W^{\lambda\bs,1+\hat{s}(1-\hat{s})}(I)}\leq C\abs{v_n}_{W^{s_n,1+s_n(1-s_n)}(I)}
\ee
for every $n\geq n_0$. In particular, \eqref{total_s_bound} yields the uniform bound
\be
\sup_{n\geq 1}\abs{v_n}_{W^{\lambda\bs,1+\hat{s}(1-\hat{s})}(I)}\leq C.
\ee
In view of Theorem \ref{Poincare_TV}, Corollary \ref{sobo_frac_ref}, and estimate \eqref{eq:starpage9} we deduce the existence of a map $\bar v$ such that, up to the extraction of a (non-relabeled) subsequence, 
\be
\label{NUM-MORE}
v_n\rightharpoonup \bar v\quad\text{weakly in }W^{\lambda\bs,1+\hat{s}(1-\hat{s})}(I).
\ee
Since $\lambda\bs(1+\hat{s}(1-\hat{s}))<\hat{s}(1+\hat{s}(1-\hat{s}))<1$, and $1<\frac{1+\hat{s}(1-\hat{s})}{1-\lambda\bs(1+\hat{s}(1-\hat{s}))}$, by Theorem \ref{embedding} (1.) the space $W^{\lambda\bs,1+\hat{s}(1-\hat{s})}(I)$ embeds compactly into $L^1(I)$. Hence, the convergence in \eqref{NUM-MORE} holds also strongly in $L^1(I)$, and \eqref{eq:NUM1} follows. In particular, Fatou's Lemma yields
\be \abs{v}_{W^{\bs,1+\bs(1-\bs)}(I)}^{1+\bs(1-\bs)}\leq \liminf_{n_k\to +\infty}\abs{v_{n_k}}_{W^{s_{n_k},1+s_{n_k}(1-s_{n_k})}(I)}^{1+s_{n_k}(1-s_{n_k})},\ee
which in turn implies \eqref{par_semi_lsc}.\\

\noindent\textbf{Step 3}: We omit the proof of the result in Case 4, and in Case 3 for $\bs<1$, as they follow from analogous arguments. Regarding Case 3 for $\bs=1$, by H\"older inequality we have
\begin{align}
&\int_I\int_I \frac{|v_n(x)-v_n(y)|}{|x-y|^{1+\frac{s_n}{2-s_n}}}\,dx\,dy\\
&\leq\left[\int_I\int_I \left(\frac{|v_n(x)-v_n(y)|}{|x-y|^{1+\frac{s_n}{2-s_n}}}\right)^{1+s_n(1-s_n)}\,dx\,dy\right]^{\frac{1}{1+s_n(1-s_n)}}\\
&\label{eq:star3}=\left[\int_I\int_I \frac{|v_n(x)-v_n(y)|^{1+s_n(1-s_n)}}{|x-y|^{1+\frac{s_n}{2-s_n}+s_n(1-s_n)+\frac{s_n^2}{2-s_n}(1-s_n)}}\,dx\,dy\right]^{\frac{1}{1+s_n(1-s_n)}}.
\end{align}
Now,
\be
1+\frac{s_n}{2-s_n}+s_n(1-s_n)+\frac{s_n^2}{2-s_n}(1-s_n)<1+s_n+s_n^2(1-s_n)
\ee
for $n$ big enough (because $s_n\nearrow 1$). Thus
\be
\frac{1}{|x-y|^{1+\frac{s_n}{2-s_n}+s_n(1-s_n)+\frac{s_n^2}{2-s_n}(1-s_n)}}<\frac{1}{|x-y|^{1+s_n+s_n^2(1-s_n)}}
\ee
for every $x,y\in I$, $x\neq y$, and by \eqref{eq:star3} we obtain
\be
\abs{v_n}_{W^{\frac{s_n}{2-s_n},1}(I)}\leq \abs{v_n}_{W^{s_n,1+s_n(1-s_n)}(I)}^{\frac{1}{1+s_n(1-s_n)}}
\ee
for every $n\in \N$. Property \eqref{total_s_bound} yields the existence of a constant $C$ such that
\be
\label{eq:3dots}
\sup_{n\in \N} (1-s_n)\Big(\abs{v_n}_{W^{\frac{s_n}{2-s_n},1}(I)}+\Big|\int_I v_n(x)\,dx\Big|\Big)\leq C.
\ee
Setting $t_n:=\frac{s_n}{2-s_n}$, there holds $t_n\to 1$ as $n\to +\infty$, and \eqref{eq:3dots} implies
\be
\sup_{n\in \N}(1-t_n)\left\{\abs{v_n}_{W^{t_n,1}(I)}+\Big|\int_I v_n(x)\,dx\Big|\right\}\leq C.
\ee
Properties \eqref{eq:NUM3} and \eqref{eq:NUM4} are then a consequence of \cite[Theorem 4]{bourgain.brezis.mironescu}.
\end{proof}

We now prove Theorem \ref{intermediate}.
\begin{proof}[Proof of Theorem \ref{intermediate}]
Fix $\ep>0$. Let $v_0\in BV(I)$ be such that
\be\abs{u}_{TGV^2_{\alpha}(I)}\geq \alpha_0\abs{u'-v_0}_{\mathcal{M}_b(I)}+\alpha_1|v_0'|_{\mathcal{M}_b(I)}-\ep.\ee
Let $v_0^k\in W^{1,\infty}(I)$ satisfy
\be
\abs{|(v_0^k)'|_{\mathcal{M}_b(I)}-|v_0'|_{\mathcal{M}_b(I)}}<\ep,
\ee
and 
\be
\norm{v_0-v_0^k}_{L^1(I)}<\ep.
\ee
 
In view of Proposition \ref{prop-int} there holds

\begin{align}
\nonumber\limsup_{s\nearrow 1}\abs{u}_{TGV^{1+s}_{\alpha}(I)}&\leq \limsup_{s\nearrow 1}\left\{\alpha_0\abs{u'-sv_0^k}_{\mathcal{M}_b(I)}+\alpha_0 s(1-s)\Big|\int_I v_0^k(x)\,dx\Big|\right.\\
&\nonumber\qquad\left.+\alpha_1 s(1-s)\abs{v_0^k}_{W^{s,1+s(1-s)}(I)}\right\}\\
&\nn\quad\leq \alpha_0\abs{u'-v_0^k}_{\mathcal{M}_b(I)}+\alpha_1\abs{(v_0^k)'}_{\mathcal{M}_b(I)}\\
&\nn\quad\leq \alpha_0\abs{u'-v_0}_{\mathcal{M}_b(I)}+\alpha_1\abs{v_0'}_{\mathcal{M}_b(I)}+(\alpha_0+\alpha_1)\ep\\
&\nn\quad\leq \abs{u}_{TGV^2_{\alpha}(I)}+(\alpha_0+\alpha_1+1)\ep.
\end{align}
The arbitrariness of $\ep$ yields
\be
\label{limsup1}
\limsup_{s\nearrow 1}\abs{u}_{TGV^{1+s}_{\alpha}(I)}\leq \abs{u}_{TGV^2_{\alpha}(I)}.
\ee

To prove the opposite inequality, for every $s\in (0,1)$ let $v_0^s\in W^{s,1+s(1-s)}(I)$ be such that 
\begin{align}
\nn
&\alpha_0\abs{u'-sv_0^s}_{\mathcal{M}_b(I)}+\alpha_1 s(1-s)\abs{v_0^s}_{W^{s,1+s(1-s)}(I)}+\alpha_0 s(1-s)\Big|\int_I v_0^s(x)\,dx\Big|\\
&\label{limsup2}\quad\leq \abs{u}_{TGV^{1+s}_{\alpha}(I)}+s.
\end{align}
In view of \eqref{limsup1} and Proposition \ref{compact_seminorm}, there exists $\tilde{v}\in BV(I)$ such that, up to the extraction of a (non-relabeled) subsequence,
\be
\label{eq:dot}
v_0^s-\int_I v_0^s(x)\,dx\to \tilde{v}\quad\text{strongly in }L^1(I),
\ee
as $s\to 1$, and
\be
\label{eq:2dots}
\lim_{s\nearrow 1}s(1-s)\abs{v_0^s}_{W^{s,1+s(1-s)}(I)}\geq \abs{\tilde{v}'}_{\mathcal{M}_b(I)}.
\ee
Additionally, by \eqref{limsup1} and \eqref{limsup2} there holds
\begin{align*}
s\norm{v_0^s}_{L^1(I)}\leq \abs{u'}_{\mathcal{M}_b(I)}+\norm{u'-sv_0^s}_{\mathcal{M}_b(I)}\leq \abs{u'}_{\mathcal{M}_b(I)}+\abs{u}_{TGV^{1+s}_{\alpha}(I)}+s\leq C
\end{align*}
for every $s\in (0,1)$. Thus, there exists a constant $C$ such that
\be
\lim_{s\to 1}\int_I v_0^s(x)\,dx=C.
\ee
In particular, setting $v:=\tilde{v}+C$, by \eqref{eq:dot} and \eqref{eq:2dots} there holds
\be
v_0^s\to v\quad\text{strongly in }L^1(I),
\ee
and
\be
\lim_{s\to 1}s(1-s)\abs{v_0^s}_{W^{s,1+s(1-s)}(I)}\geq \abs{v'}_{\mathcal{M}_b(I)}.
\ee
Passing to the limit in \eqref{limsup2} we deduce the inequality
\bes
\abs{u}_{TGV^2_{\alpha}(I)}\leq \alpha_0\abs{u'-v}_{\mathcal{M}_b(I)}+\alpha_1\abs{v'}_{\mathcal{M}_b(I)}\leq \liminf_{s\nearrow 1}\abs{u}_{TGV^{1+s}_{\alpha}(I)},
\ees
which in turn implies the thesis.\\

To study the case $s\searrow 0$, we first observe that
\be
\label{preliminary}\sup_{s\in (0,1)}\abs{u}_{TGV^{1+s}_{\alpha}(I)}\leq \alpha_0\AAA |u'|_{\mathcal{M}_b(I)}.
\ee
Thus we only need to prove the opposite inequality. To this aim, for every $s\in (0,1)$ let $v_0^s\in W^{s,1+s(1-s)}(I)$ be such that 
\begin{align}
&\nn\alpha_0\abs{u'-sv_0^s}_{\mathcal{M}_b(I)}+\alpha_1 s(1-s)\abs{v_0^s}_{W^{s,1+s(1-s)}(I)}+\alpha_0 s(1-s)\Big|\int_I v_0^s(x)\,dx\Big|\\
&\quad\label{liminf3}\leq \abs{u}_{TGV^{1+s}_{\alpha}(I)}+s.
\end{align}
Since $s(1+s(1-s))<1$ for $s\in(0,1)$, by \eqref{preliminary} and \eqref{liminf3}, and in view of Theorem \ref{Poincare_TV}, there exists a constant $C$ such that
\be
s\int_I v_0^s(x)\,dx\to C,\quad\text{and}\quad s v_0^s\to C\quad\text{strongly in }L^1(I).
\ee
\AAA
Passing to the limit in \eqref{liminf3} we deduce the inequality
\be
\alpha_0\abs{u'}_{\mathcal{M}_b(I)}\leq \alpha_0|u'-C|_{\mathcal{M}_b(I)}+\alpha_0 C\leq \liminf_{s\searrow 0}\abs{u}_{TGV^{1+s}_{\alpha}(I)}.
\ee
The thesis follows owing to \eqref{preliminary}.
\end{proof}
\begin{corollary}[Asymptotic behavior of the fractional $TGV$ seminorm-2]
\label{cor:higher-order}
Let $k\geq 2$. For every $u\in BV(I)$, up to the extraction of a (non-relabeled) subsequence there holds 
\be
\lim_{s\nearrow 1}\abs{u}_{TGV^{k+s}_{\alpha}(I)}= \abs{u}_{TGV^{k+1}_{\alpha}(I)}\text{ and }\lim_{s\searrow 0}\abs{u}_{TGV^{k+s}_{\alpha}(I)}=\abs{u}_{TGV^{k}_{\hat{\alpha}}(I)},
\ee 
where $\hat{\alpha}:=(\alpha_0,\dots,\alpha_{k-1})\in\R^{k}_{+}$.
\end{corollary}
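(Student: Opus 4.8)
The plan is to reduce Corollary \ref{cor:higher-order} to Theorem \ref{intermediate} by exploiting the nested (telescopic) structure of the functionals. For $k\geq 2$ one unpacks Definition \ref{TGV_fractional} to obtain, for every $s\in[0,1)$,
\begin{equation*}
\abs{u}_{TGV^{k+s}_{\alpha}(I)}=\inf\Big\{\alpha_0\abs{u'-v_0}_{\mathcal{M}_b(I)}+\textstyle\sum_{i=1}^{k-2}\alpha_i\abs{v_{i-1}'-v_i}_{\mathcal{M}_b(I)}+\abs{v_{k-2}}_{TGV^{1+s}_{(\alpha_{k-1},\alpha_{k})}(I)}:\,v_0,\dots,v_{k-2}\in BV(I)\Big\},
\end{equation*}
with the convention $TGV^{1+0}_{(\alpha_{k-1},\alpha_k)}:=\alpha_{k-1}TV$ when $s=0$, and the completely analogous identities, with the innermost block replaced by $\abs{v_{k-2}}_{TGV^2_{(\alpha_{k-1},\alpha_k)}(I)}$ and by $\alpha_{k-1}\abs{v_{k-2}'}_{\mathcal{M}_b(I)}$, hold for $\abs{u}_{TGV^{k+1}_{\alpha}(I)}$ and $\abs{u}_{TGV^{k}_{\hat\alpha}(I)}$, respectively. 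Thus all three functionals carry the same ``integer chain'' acting on $v_0,\dots,v_{k-2}$ and differ only in the innermost block acting on $v_{k-2}$, which is exactly $TGV^{1+s}_{(\alpha_{k-1},\alpha_k)}$, $TGV^{2}_{(\alpha_{k-1},\alpha_k)}$, $TGV^{1}_{(\alpha_{k-1})}$; the statement then follows (equivalently, by induction on $k$ with base case Theorem \ref{intermediate}, peeling off one level at a time) once the limits $s\nearrow 1$ and $s\searrow 0$ are passed through the outer infimum with the help of Theorem \ref{intermediate} applied to the innermost block.

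For the upper bounds this is immediate. Given $\varepsilon>0$, I would fix $v_0,\dots,v_{k-2}\in BV(I)$ almost optimal in the representation of $\abs{u}_{TGV^{k+1}_{\alpha}(I)}$; since the integer chain does not depend on $s$, the bound \eqref{limsup1} (applied to $v_{k-2}\in BV(I)$ and to the parameter pair $(\alpha_{k-1},\alpha_k)$) gives $\limsup_{s\nearrow 1}\abs{u}_{TGV^{k+s}_{\alpha}(I)}\leq \abs{u}_{TGV^{k+1}_{\alpha}(I)}+C\varepsilon$, and $\varepsilon\to 0$ concludes. For $s\searrow 0$, choosing $v_{k-1}\equiv 0$ in Definition \ref{TGV_fractional} gives $\abs{u}_{TGV^{k+s}_{\alpha}(I)}\leq \abs{u}_{TGV^{k}_{\hat\alpha}(I)}$ for every $s\in(0,1)$, the analogue of \eqref{preliminary}.

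The lower bounds are where the work is, and here I would mimic the proof of Theorem \ref{intermediate}. Along a sequence realizing $\liminf_{s\nearrow 1}\abs{u}_{TGV^{k+s}_{\alpha}(I)}$, pick near-optimal tuples $v_0^s,\dots,v_{k-2}^s\in BV(I)$, $v_{k-1}^s\in W^{s,1+s(1-s)}(I)$, with error $o(1)$. By the upper bound just obtained, every summand in Definition \ref{TGV_fractional} is uniformly bounded; in particular $s(1-s)\{\abs{v_{k-1}^s}_{W^{s,1+s(1-s)}(I)}+|\int_I v_{k-1}^s|\}\leq C$, so Proposition \ref{compact_seminorm} (case $\bar s=1$) provides, along a subsequence, $v_{k-1}^s-\int_I v_{k-1}^s\to\bar v$ in $L^1(I)$ with $\liminf s(1-s)\abs{v_{k-1}^s}_{W^{s,1+s(1-s)}(I)}\geq \abs{\bar v'}_{\mathcal{M}_b(I)}$; exactly as in Theorem \ref{intermediate} one controls $s\norm{v_{k-1}^s}_{L^1(I)}$ through the term $\alpha_{k-1}\abs{(v_{k-2}^s)'-sv_{k-1}^s}_{\mathcal{M}_b(I)}$ and, since $s\to1$, upgrades the convergence to $v_{k-1}^s\to v_{k-1}^{*}$ in $L^1(I)$ for some $v_{k-1}^{*}\in BV(I)$, while $\alpha_{k-1}s(1-s)|\int_I v_{k-1}^s|\to 0$. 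Combining this with the $L^1(I)$-compactness of $v_0^s,\dots,v_{k-2}^s$, the weak$^*$ lower semicontinuity of the total variation, and \eqref{par_semi_lsc}--\eqref{eq:NUM4}, I would pass to the $\liminf$ to produce a limit tuple with
\begin{equation*}
\liminf_{s\nearrow 1}\abs{u}_{TGV^{k+s}_{\alpha}(I)}\geq \alpha_0\abs{u'-v_0^{*}}_{\mathcal{M}_b(I)}+\textstyle\sum_{i=1}^{k-2}\alpha_i\abs{(v_{i-1}^{*})'-v_i^{*}}_{\mathcal{M}_b(I)}+\alpha_{k-1}\abs{(v_{k-2}^{*})'-v_{k-1}^{*}}_{\mathcal{M}_b(I)}+\alpha_{k}\abs{(v_{k-1}^{*})'}_{\mathcal{M}_b(I)}\geq \abs{u}_{TGV^{k+1}_{\alpha}(I)}.
\end{equation*}
The case $s\searrow 0$ is identical, except that Theorem \ref{Poincare_TV} now forces $sv_{k-1}^s\to C$ in $L^1(I)$ for a constant $C$, the boundary term converges to $\alpha_{k-1}|C|$, and one uses $\abs{(v_{k-2}^{*})'-C}_{\mathcal{M}_b(I)}+|C|\geq \abs{(v_{k-2}^{*})'}_{\mathcal{M}_b(I)}$ to absorb the constant, obtaining $\liminf_{s\searrow 0}\abs{u}_{TGV^{k+s}_{\alpha}(I)}\geq \abs{u}_{TGV^{k}_{\hat\alpha}(I)}$; together with the upper bounds this proves the corollary.

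The main obstacle is the $L^1(I)$-compactness, uniform in $s$, of the near-optimal integer-order auxiliary functions $v_0^s,\dots,v_{k-2}^s$: the telescoping bounds $\abs{(v_{i-1}^s)'-v_i^s}_{\mathcal{M}_b(I)}\leq C$ do not by themselves bound each $v_i^s$ in $BV(I)$, since variation can escape down the chain. As in the classical theory of $TGV^k$, this has to be settled by comparing the norm of $BGV^{k+s}_{\alpha}(I)$ with that of $BV(I)$, the comparison constants remaining controlled for $s$ in compact subsets of $[0,1]$ and the endpoints $s\to 0,1$ being compensated precisely by the $s(1-s)$-weighting in Definition \ref{TGV_fractional} (this is what Proposition \ref{compact_seminorm} does at the innermost level, and what Theorem \ref{Poincare_TV} together with the compact embedding of $BV(I)$ into $L^1(I)$ achieves higher up the chain). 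Once this uniform compactness is available, the rest is a routine adaptation of the proof of Theorem \ref{intermediate}.
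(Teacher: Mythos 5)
Your proposal is correct and takes exactly the route the paper intends: the paper's entire proof of Corollary \ref{cor:higher-order} is the single sentence ``The result follows by straightforward adaptations of the arguments in the proof of Theorem \ref{intermediate}'', and your telescopic decomposition, the reuse of \eqref{limsup1}/\eqref{preliminary} for the upper bounds, and the Proposition \ref{compact_seminorm}--based compactness for the lower bounds are precisely that adaptation, spelled out. You also correctly isolate the one point the paper glosses over --- the $s$-uniform $L^1$-compactness of the integer-order auxiliary functions $v_0^s,\dots,v_{k-2}^s$, which must be obtained from the norm-equivalence arguments behind Proposition \ref{equivalent_semi} rather than from the telescoping bounds alone --- so your write-up is, if anything, more complete than the paper's.
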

\begin{proof}
The result follows by straightforward adaptations of the arguments in the proof of Theorem \ref{intermediate}.
\end{proof}
%

We proceed by showing that the minimization problem in Definition \ref{TGV_fractional} has a solution. 

\begin{proposition}\label{equiv_norm}
If the infimum in Definition \ref{TGV_fractional} is finite, then it is attained.
\end{proposition}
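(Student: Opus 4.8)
The plan is to apply the direct method of the calculus of variations, treating the fractional last component $v_{k-1}$ via the fractional Sobolev compactness of Section~\ref{prelimy} (in fact via Proposition~\ref{compact_seminorm} itself) and the integer-order components $v_0,\dots,v_{k-2}$ via classical $BV$-compactness. First I would observe that finiteness of the infimum forces $u\in BV(I)$: any admissible competitor makes the leading term $\abs{u'-sv_0}_{\mathcal{M}_b(I)}$ (for $k=1$), respectively $\abs{u'-v_0}_{\mathcal{M}_b(I)}$ (for $k>1$), finite, and since $v_0\in L^1(I)$ we get that $u'$ is a finite Radon measure. I would then fix a minimizing tuple $(v_0^n,\dots,v_{k-1}^n)$, so that each (nonnegative) term of the functional is uniformly bounded in $n$, extract a candidate minimizer componentwise, and finally pass to the lower limit.

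For the last component, with $s$ fixed, the hypothesis \eqref{total_s_bound} of Proposition~\ref{compact_seminorm} holds for the constant sequence $s_n\equiv s\in(0,1)$, and that proposition directly provides a subsequence with $v_{k-1}^n\to\bar v_{k-1}$ strongly in $L^1(I)$, $\bar v_{k-1}\in W^{s,1+s(1-s)}(I)$, and $s(1-s)\abs{\bar v_{k-1}}_{W^{s,1+s(1-s)}(I)}\le\liminf_n s(1-s)\abs{v_{k-1}^n}_{W^{s,1+s(1-s)}(I)}$; when $k=1$ this is all the compactness required. When $k>1$ I would work down the chain: the bound on $\abs{u'-v_0^n}_{\mathcal{M}_b(I)}$ together with $u\in BV(I)$ gives $\norm{v_0^n}_{L^1(I)}\le\abs{u'}_{\mathcal{M}_b(I)}+\abs{u'-v_0^n}_{\mathcal{M}_b(I)}\le C$, and the difference terms $\abs{v_{j-1}^n{}'-v_j^n}_{\mathcal{M}_b(I)}$ (with $sv_{k-1}^n$ in place of $v_{k-1}^n$ in the last one) are all uniformly bounded; starting from the already controlled $v_{k-1}^n$ and moving backwards, each $v_j^n$ inherits a uniform total-variation bound, hence, by the one-dimensional embedding $BV(I)\hookrightarrow L^\infty(I)$, a uniform $L^\infty$-bound on its mean-zero part. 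Consequently $v_0^n$ coincides, up to an $L^1(I)$-bounded remainder, with a polynomial of degree at most $k-2$ whose coefficients are obtained from the averages $\int_I v_0^n,\dots,\int_I v_{k-2}^n$ through an invertible triangular linear system; the $L^1(I)$-bound on $v_0^n$ therefore forces all these averages to stay bounded, and a forward sweep through the chain upgrades the total-variation bounds to uniform $BV(I)$-bounds. Up to a further subsequence, $v_j^n\to\bar v_j$ strongly in $L^1(I)$ and weakly* in $BV(I)$ for $0\le j\le k-2$.

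It remains to pass to the lower limit. Strong $L^1$-convergence gives $\int_I v_{k-1}^n\to\int_I\bar v_{k-1}$, while weak* convergence of the derivative measures $v_j^n{}'$, together with the strong $L^1$-convergence of the $v_j^n$ and the fixed measure $u'$, yields by lower semicontinuity of the total variation that each difference term does not increase in the limit; combined with the lower semicontinuity of $s(1-s)\abs{\cdot}_{W^{s,1+s(1-s)}(I)}$ recorded in Proposition~\ref{compact_seminorm}, summing these inequalities shows that the limit tuple $(\bar v_0,\dots,\bar v_{k-1})$ is admissible and realizes the infimum, which is thus attained. I expect the cascade coercivity step in the case $k>1$ to be the main obstacle: the averages of the intermediate components $v_1^n,\dots,v_{k-2}^n$ are not directly penalized, so their control must be recovered from the single $L^1$-bound on $v_0^n$ through the polynomial structure of the constraint chain — this is the one-dimensional counterpart of the familiar kernel/normalization subtlety in the well-posedness theory of the integer-order $TGV^k$ functionals. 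For $k=1$ no such difficulty arises and the proof reduces to the plain direct method supported by Proposition~\ref{compact_seminorm}.
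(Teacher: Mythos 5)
Your proof is correct and follows essentially the same direct-method argument as the paper: uniform control of the fractional component via the penalized seminorm and average together with the compactness/lower-semicontinuity package of Proposition \ref{compact_seminorm} applied with $s_n\equiv s$ (which the paper rederives directly from Theorem \ref{embedding}, Theorem \ref{Poincare_TV} and Corollary \ref{sobo_frac_ref}), $BV$-bounds on the remaining components, and lower semicontinuity of each term. The only substantive addition is your explicit recovery of the unpenalized averages of the intermediate components for $k\geq 3$ through the triangular polynomial system, a point the paper leaves implicit by treating only $k=1,2$ and invoking ``analogous arguments.''
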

\begin{proof}
Let $k=1$. Let $\alpha\in \mathbb{R}^2_{+}$, and let $u\in BGV^{1+s}_{\alpha}(I)$. We need to show that 
\begin{multline}\label{TGV2_miner}
\abs{u}_{TGV^{1+s}_{\alpha}(I)}=\min\flp{\alpha_0\abs{u'-sv}_{{\mathcal{M}_b(I)}}+\alpha_1s(1-s)\abs{v}_{W^{s,1+s(1-s)}(I)}\right.\\
\left. +\alpha_0 s(1-s)\Big|\int_I v(x)\,dx\Big|\AAA:\, v\in W^{s,1+s(1-s)}(I)}.
\end{multline}
We first observe that $u\in BV(I)$. \\\\
Indeed, let $\delta>0$, and let $v\in W^{s,1+s(1-s)}(I)$ be such that 
\be
\alpha_0\abs{u'-sv}_{{\mathcal{M}_b(I)}}+\alpha_1s(1-s)\abs{v}_{W^{s,1+s(1-s)}(I)}+\alpha_0 s(1-s)\Big|\int_I v(x)\,dx\Big|\leq \abs{u}_{TGV^{1+s}_{\alpha}(I)}+\delta.
\ee
By H\"older inequality there holds
\begin{align*}
&\alpha_0 \abs{u'}_{\mathcal{M}_b(I)}\leq \alpha_0 \abs{u'-sv}_{\mathcal{M}_b(I)}+\alpha_0s\norm{v}_{L^1(I)}\\
&\leq \alpha_0 \abs{u'-sv}_{\mathcal{M}_b(I)}+\alpha_1 s\abs{v}_{W^{s,1+s(1-s)}(I)}+\alpha_0s\norm{v}_{L^{1+s(1-s)}(I)}+\alpha_0 s(1-s)\Big|\int_I v(x)\,dx\Big|\\
&\leq \abs{u}_{TGV^{1+s}_{\alpha}(I)}+\delta+\alpha_1 s^2\abs{v}_{W^{s,1+s(1-s)}(I)}+\alpha_0 s\norm{v}_{L^{1+s(1-s)}(I)},
\end{align*}
which implies the claim.\\

Let now $\seqn{v_n}\subset W^{s,1+s(1-s)}(I)$ be a minimizing sequence for \eqref{TGV2_miner}. Since $s(1+s(1-s))<1$ for $s\in (0,1)$, by Theorem \ref{embedding} (1.) there exists a constant $C$ such that
\be
\sup_{n\in \mathbb{N}}\norm{v_n}_{W^{s,1+s(1-s)}(I)}\leq C.
\ee
Thus, by Corollary \ref{sobo_frac_ref} there exists $\bar v\in W^{s,1+s(1-s)}(I)$ such that, up to the extraction of a (non-relabeled) subsequence, there holds
\be
v_n\rightharpoonup \bar v\quad\text{weakly in }W^{s,1+s(1-s)}(I),
\ee
and hence by Theorem \ref{embedding} (1.),
\be
v_n\to \bar v\quad\text{strongly in }L^1(I).
\ee
The thesis follows now by the lower semicontinuity of the total variation and the $W^{s,1+s(1-s)}$-norm with respect to the $L^1$ convergence and the weak convergence in $W^{s,1+s(1-s)}(I)$, respectively. \\\\
For $k=2$, let $\seqn{v_0^n}\subset BV(I)$ and $\seqn{v_1^n}\subset W^{s,1+s(1-s)}(I)$ be such that
\begin{align*}
&\lim_{n\to +\infty}\left\{\alpha_0|u'-v_0^n|_{\mathcal{M}_b(I)}+\alpha_1|(v_0^n)'-sv_1^n|_{\mathcal{M}_b(I)}+\alpha_2 s(1-s)|v_1^n|_{W^{s,1+s(1-s)}(I)}\right.\\
&\quad\left.+\alpha_0 s(1-s)\Big|\int_I v_1^n(x)\,dx\Big|\AAA\right\}=TGV^{2+s}_{\alpha}(I).
\end{align*}
Since $s(1+s(1-s))<1$ for $s\in (0,1)$, by Theorem \ref{embedding} (1.) we obtain that $\seqn{v_1^n}$ is uniformly bounded in $W^{s,1+s(1-s)}(I)$. Therefore, $\seqn{v_0^n}$ is uniformly bounded in $BV(I)$, and there exist $v_0\in BV(I)$ and $v_1\in W^{s,1+s(1-s)}(I)$ such that, up to the extraction of a (non-relabeled) subsequence,
\be
v_0^n\wtos v_0\quad\text{weakly* in }BV(I),
\ee
and
\be
v_1^n\rightharpoonup v_1\quad\text{weakly in }W^{s,1+s(1-s)}(I).
\ee
In particular, by Theorem \ref{embedding} (1.), 
\be
v_1^n\to v_1\quad\text{strongly in }L^1(I).
\ee
The minimality of $v_0$ and $v_1$ is a consequence of lower semicontinuity. The thesis for $k>2$ follows by analogous arguments.
\end{proof}
We observe that the $TGV^{k+s}$ seminorms are all topologically equivalent to the total variation seminorm.

\begin{proposition}\label{equivalent_semi}
For every $k\geq 1$ and $0<s<1$, we have
\be
BV(I)\sim BGV^k(I)\sim BGV^{k+s}(I),
\ee
namely the three function spaces are topologically equivalent.
\end{proposition}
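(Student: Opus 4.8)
The plan is to establish the two continuous embeddings $BV(I)\hookrightarrow BGV^{k+s}_{\alpha}(I)$ and $BGV^{k+s}_{\alpha}(I)\hookrightarrow BV(I)$, together with the analogues for the integer-order space $BGV^{k}(I)$; since these embeddings come with norm bounds that are uniform over the function, they force the three norms $\norm{\cdot}_{BV(I)}$, $\norm{\cdot}_{BGV^{k}_{\alpha}(I)}$ and $\norm{\cdot}_{BGV^{k+s}_{\alpha}(I)}$ to be pairwise equivalent, which is exactly the asserted topological equivalence (and in particular re-proves that $BGV^{k+s}_{\alpha}(I)$ does not depend on $\alpha$). The first embedding is immediate: for $u\in BV(I)$ the choice $v_0=\dots=v_{k-1}=0$ is admissible in Definition \ref{TGV_fractional} and yields $\abs{u}_{TGV^{k+s}_{\alpha}(I)}\leq \alpha_0\abs{u'}_{\mathcal{M}_b(I)}$, hence $\norm{u}_{BGV^{k+s}_{\alpha}(I)}\leq \max\{1,\alpha_0\}\norm{u}_{BV(I)}$; the identical choice settles $BV(I)\hookrightarrow BGV^{k}(I)$.

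For the reverse embedding I would fix $u\in BGV^{k+s}_{\alpha}(I)$ and $\delta>0$, and choose an admissible $k$-tuple $(v_0,\dots,v_{k-1})$, with $v_i\in BV(I)$ for $0\leq i\leq k-2$ and $v_{k-1}\in W^{s,1+s(1-s)}(I)$, realizing $\abs{u}_{TGV^{k+s}_{\alpha}(I)}$ up to the error $\delta$ (one may equivalently use the exact minimizer provided by Proposition \ref{equiv_norm}). Write $p:=1+s(1-s)$ and recall $sp<1$ for $s\in(0,1)$. The top auxiliary function is \emph{fully} controlled in $L^1(I)$: Poincar\'e's inequality (Theorem \ref{Poincare_TV}) bounds $\norm{v_{k-1}-\int_I v_{k-1}}_{L^1(I)}$ by $C(s)\abs{v_{k-1}}_{W^{s,p}(I)}$, and adding the term $\alpha_{k-1}s(1-s)\bigl|\int_I v_{k-1}\bigr|$ appearing in the definition gives $\norm{v_{k-1}}_{L^1(I)}\leq C(\alpha,s)\bigl(\abs{u}_{TGV^{k+s}_{\alpha}(I)}+\delta\bigr)$. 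I would then cascade down the chain: from $\alpha_{k-1}\abs{v_{k-2}'-sv_{k-1}}_{\mathcal{M}_b(I)}\leq \abs{u}_{TGV^{k+s}_{\alpha}(I)}+\delta$ one obtains $\abs{v_{k-2}'}_{\mathcal{M}_b(I)}\leq C(\alpha,s)\bigl(\abs{u}_{TGV^{k+s}_{\alpha}(I)}+\delta\bigr)$, and, using the one-dimensional embedding $BV(I)\hookrightarrow L^\infty(I)$, the function $v_{k-2}$ equals the free constant $v_{k-2}(0^+)$ plus a remainder bounded in $L^\infty(I)$ by $\abs{v_{k-2}'}_{\mathcal{M}_b(I)}$; iterating through $v_{k-3},\dots,v_0$ and finally using $u'=v_0+(u'-v_0)$ one reaches a decomposition $u=P+w$, where $P$ is a polynomial of degree at most $k-1$ whose coefficients are, up to combinatorial constants, the free numbers $u(0^+),v_0(0^+),\dots,v_{k-2}(0^+)$, and $\norm{w}_{BV(I)}\leq C(\alpha,s,k)\bigl(\abs{u}_{TGV^{k+s}_{\alpha}(I)}+\delta\bigr)$.

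It then remains to control the polynomial part, which happens only \emph{a posteriori} through the $L^1$ norm of $u$. Since polynomials of degree at most $k-1$ on the bounded interval $I$ form a finite-dimensional space, all norms on it are comparable, so $\max\abs{\mathrm{coeff}(P)}\leq C_k\norm{P}_{L^1(I)}\leq C_k\bigl(\norm{u}_{L^1(I)}+\norm{w}_{L^1(I)}\bigr)\leq C(\alpha,s,k)\bigl(\norm{u}_{L^1(I)}+\abs{u}_{TGV^{k+s}_{\alpha}(I)}+\delta\bigr)$; hence $\norm{P}_{BV(I)}$, and therefore $\norm{u}_{BV(I)}\leq \norm{P}_{BV(I)}+\norm{w}_{BV(I)}$, is bounded by $C(\alpha,s,k)\bigl(\norm{u}_{L^1(I)}+\abs{u}_{TGV^{k+s}_{\alpha}(I)}+\delta\bigr)$. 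Letting $\delta\searrow 0$ gives $\norm{u}_{BV(I)}\leq C\norm{u}_{BGV^{k+s}_{\alpha}(I)}$, i.e.\ the embedding $BGV^{k+s}_{\alpha}(I)\hookrightarrow BV(I)$; an entirely analogous, and in fact simpler, cascade argument (no fractional term, same polynomial degree $k-1$) yields $BGV^{k}(I)\hookrightarrow BV(I)$. Combined with the first step, this gives all three norm equivalences on a common underlying set.

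A word on the difficulty. The \textbf{hard part} is the bookkeeping in the cascade step: the quantity $\abs{u}_{TGV^{k+s}_{\alpha}(I)}$ controls neither the averages nor the boundary values of the intermediate auxiliary functions $v_0,\dots,v_{k-2}$, so the polynomial part $P$ genuinely cannot be estimated until the very last step, where it is absorbed into $\norm{u}_{L^1(I)}$. Isolating the finitely many ``free constants'', verifying that the remainder $w$ is uniformly bounded in $BV(I)$, and checking that the construction produces a polynomial of degree at most $k-1$ (and not $k$, as one might fear from the presence of $k$ auxiliary functions, the point being that the extra average term in Definition \ref{TGV_fractional} removes precisely the constant freedom of $v_{k-1}$) is where the argument needs care; everything else reduces to Poincar\'e's inequality, the embedding $BV(I)\hookrightarrow L^\infty(I)$, and equivalence of norms in finite dimensions.
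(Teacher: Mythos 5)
Your proof is correct, and for the core case $k=1$ it coincides with the paper's argument: both rest on the observation that the Poincar\'e inequality (Theorem \ref{Poincare_TV}) together with the extra average term $s(1-s)\bigl|\int_I v_0\,dx\bigr|$ in Definition \ref{TGV_fractional} controls $\norm{v_0}_{L^1(I)}$ in full, so that $\abs{u'}_{\mathcal{M}_b(I)}\leq\abs{u'-sv_0}_{\mathcal{M}_b(I)}+s\norm{v_0}_{L^1(I)}$ closes the estimate, while the trivial competitor $v_0=0$ gives the converse bound \eqref{preliminary}. Where you genuinely go beyond the paper is in the treatment of $k\geq 2$ and of the integer-order space: the paper writes out only $k=1$, declares $k>1$ ``analogous'', and outsources $BV(I)\sim BGV^{2}(I)$ to \cite[Theorem 3.3]{bredies.valkonen.proceeding}, whereas you carry out the cascade and the decomposition $u=P+w$ explicitly. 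This extra work is not cosmetic: for $k\geq 2$ the pure seminorm estimate $\abs{u'}_{\mathcal{M}_b(I)}\leq C\abs{u}_{TGV^{k+s}_{\alpha}(I)}$ that the paper proves for $k=1$ is simply false (take $u(x)=Mx$ with $v_0\equiv M$ and all higher $v_i=0$, which annihilates the seminorm), so the $L^1$ norm of $u$ must enter to absorb the polynomial part, exactly as in your finite-dimensionality step. Your observation that the average term removes the constant freedom of the top auxiliary function, keeping the polynomial degree at $k-1$ in both the fractional and the integer case, is the right bookkeeping and is nowhere spelled out in the paper.
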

\begin{proof}
We only show that 
\be
\label{equivalent}
BV(I)\sim BGV^{1+s}(I)\sim BGV^2(I).
\ee
The proof of the inequality for $k>1$ is analogous. In view of \eqref{preliminary}, to prove the first equivalence relation in \eqref{equivalent} we only need to show that there exist a constant $C$ and a multi-index $\alpha\in\mathbb{R}^{2}_+$ such that
\be
\abs{u'}_{\mathcal{M}_b\fsp{I}}\leq C\abs{u}_{TGV^{1+s}_{\alpha}(I)}.
\ee
By Theorem \ref{embedding} we have
\begin{align*}
&\abs{u'}_{\mathcal{M}_b(I)}\leq \abs{u'-sv_0}_{\mathcal{M}_b(I)}+s\norm{v_0}_{L^1(I)}\\
&\quad\leq \abs{u'-sv_0}_{\mathcal{M}_b(I)}+Cs\abs{v_0}_{W^{s,1+s(1-s)}(I)}+s(1-s)\Big|\int_I v_0(x)\,dx\Big|\\
&\quad=\abs{u'-sv_0}_{\mathcal{M}_b(I)}+\frac{C}{(1-s)}s(1-s)\abs{v_0}_{W^{s,1+s(1-s)}(I)}+s(1-s)\Big|\int_I v_0(x)\,dx\Big|
\end{align*}
for every $v_0\in W^{s,1+s(1-s)}(I)$. Thus 
\be
\abs{u'}_{\mathcal{M}_b(I)}\leq C\abs{u}_{TGV^{1+s}_{1,\frac{C}{(1-s)}}(I)}
\ee
for every $s\in (0,1)$. This completes the proof of the first equivalence in \eqref{equivalent}. Property \eqref{equivalent} follows now by \cite[Theorem 3.3]{bredies.valkonen.proceeding}.
\end{proof}

\section{The fractional $r$-order $TGV^r$ functional}
\label{sec:Gamma}
In this section we introduce the fractional $r$-order $TGV^r$ functional and prove a $\Gamma$-convergence result with respect to the parameters $\alpha$ and $s$.
\begin{define}
\label{def:fractional TGV functional}
Let $r\in[1,+\infty)$, $\alpha\in \R^{\lfloor r\rfloor+1}_+$, and $u_\eta\in L^2(I)$. We define the functional $\F^{r}_{\alpha}:BV(I)\to [0,+\infty)$ as
\be
\F^{r}_{\alpha}(u):=\int_I |u-u_\eta|^2\,dx+\abs{u}_{TGV^{r}_{\alpha}(I)}
\ee
for every $u\in BV(I)$. Note that the definition is well-posed due to Proposition \ref{equivalent_semi}.
\end{define}
The main result of this section reads as follows.
\begin{theorem}[$\Gamma$-convergence of the fractional order $TGV$ functional]
\label{thm:new-Gamma}
Let $k\in \N$ and $u_\eta\in L^2(I)$. Let $\{s_n\}\subset [0,1]$ and $\{\alpha_n\}\subset \R^{k+1}_+$ be such that $s_n\to s$, and $\alpha_n\to \alpha$. Then, if $s\in (0,1]$, the functional $\F^{k+s_n}_{\alpha_n}$ $\Gamma$-converges to $\F^{k+s}_{\alpha}$ in the weak* topology of $BV(I)$, namely for every $u\in BV(I)$ the following two conditions hold:
\begin{enumerate}
\item[{\rm (LI)}] If 
\be u_n\wtos\text{weakly* in }BV(I),\ee
then 
\be
\F^{k+s}_{\alpha}(u)\leq \liminf_{n\to +\infty}\F^{k+s_n}_{\alpha_n}(u_n).\ee
\item[{\rm (RS)}]
There exists $\{u_n\}\subset BV(I)$ such that 
\be u_n\wtos u\quad\text{weakly* in }BV(I),\ee
and 
\be
\limsup_{n\to +\infty}\F^{k+s_n}_{\alpha_n}(u_n)\leq \F^{k+s}_{\alpha}(u).\ee
\end{enumerate}
The same result holds for $s=0$, by replacing $\alpha=(\alpha_0,\dots,\alpha_k)$ with $\hat{\alpha}:=(\alpha_0,\dots,\alpha_{k-1})$ in {\rm (LI)} and {\rm (RS)}.
\end{theorem}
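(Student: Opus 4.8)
The plan is to establish the two $\Gamma$-convergence conditions \textrm{(LI)} and \textrm{(RS)} separately, relying on the compactness and lower-semicontinuity tools already developed in Section \ref{sec_frac_TGV} --- most crucially Proposition \ref{compact_seminorm} --- together with the equivalence of all the $BGV$-spaces with $BV(I)$ from Proposition \ref{equivalent_semi}, which guarantees that the weak* topology of $BV(I)$ is the natural ambient topology. Since the fidelity term $\int_I |u-u_\eta|^2\,dx$ is continuous with respect to $L^1(I)$-convergence (indeed with respect to weak* convergence in $BV(I)$, using the compact embedding $BV(I)\hookrightarrow L^2(I)$ in dimension one), it suffices to prove the $\liminf$ and $\limsup$ inequalities for the seminorm $|u|_{TGV^{k+s_n}_{\alpha_n}(I)}$ alone, and then add back the fidelity term.

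For \textbf{(LI)}, let $u_n\wtos u$ in $BV(I)$. We may assume $\liminf_n \F^{k+s_n}_{\alpha_n}(u_n)<+\infty$, pass to a subsequence realizing the liminf, and fix near-optimal competitors in the definition of each $|u_n|_{TGV^{k+s_n}_{\alpha_n}(I)}$: for $k=1$ these are $v_0^n\in W^{s_n,1+s_n(1-s_n)}(I)$, and for $k>1$ a chain $v_0^n,\dots,v_{k-2}^n\in BV(I)$, $v_{k-1}^n\in W^{s_n,1+s_n(1-s_n)}(I)$, each realizing the infimum up to an error $1/n$. The uniform bound on $\F^{k+s_n}_{\alpha_n}(u_n)$ together with $\alpha_n\to\alpha\in\R^{k+1}_+$ (so the weights are bounded below away from $0$ and above) yields a uniform bound on each term, and in particular on $s_n(1-s_n)\big(|v_{k-1}^n|_{W^{s_n,1+s_n(1-s_n)}(I)}+|\int_I v_{k-1}^n\,dx|\big)$ when $s\in(0,1)$; when $s=1$ one uses instead the bound coming from Proposition \ref{compact_seminorm}. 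Applying Proposition \ref{compact_seminorm} to $\{v_{k-1}^n\}$ gives, after a further subsequence, strong $L^1$-convergence of $v_{k-1}^n$ (resp. of $v_{k-1}^n-\int_I v_{k-1}^n$ when $s=1$) to some $\bar v_{k-1}$ and the desired lower-semicontinuity of the weighted $W^{s_n,1+s_n(1-s_n)}$-seminorm. For $k>1$, the uniform $BV$-bounds on $v_0^n,\dots,v_{k-2}^n$ (bootstrapped up the chain from the bounds on the individual measure norms) give weak* convergence to limits $\bar v_0,\dots,\bar v_{k-2}$, and lower semicontinuity of the total variation handles each term $\alpha_{i}|(v_{i-1})'-v_i|_{\MM_b(I)}$ (with the weights converging); for the last coupling term $\alpha_{k-1}^n|(v_{k-2}^n)'-s_n v_{k-1}^n|_{\MM_b(I)}$ we combine weak* convergence of $(v_{k-2}^n)'$ with strong $L^1$-convergence of $s_n v_{k-1}^n$. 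Taking liminf term by term and using that $(\bar v_0,\dots,\bar v_{k-1})$ is an admissible competitor for $|u|_{TGV^{k+s}_{\alpha}(I)}$ (for $s=0$, note $s_n v_{k-1}^n\to$ a constant and the chain collapses to a competitor for $TGV^k_{\hat\alpha}$, giving the modified statement) yields \textrm{(LI)}.

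For \textbf{(RS)}, the natural choice is the constant sequence $u_n:=u$, which trivially converges weak* in $BV(I)$ and makes the fidelity term exact; what must be checked is $\limsup_n |u|_{TGV^{k+s_n}_{\alpha_n}(I)}\le |u|_{TGV^{k+s}_{\alpha}(I)}$. Fix $\ep>0$ and near-optimal competitors for the limit seminorm $|u|_{TGV^{k+s}_{\alpha}(I)}$; the issue is that these live in $W^{s,1+s(1-s)}(I)$, not in $W^{s_n,1+s_n(1-s_n)}(I)$, so one cannot simply plug them in. The remedy, exactly as in the proof of Theorem \ref{intermediate} (the $s\nearrow 1$ case) and Proposition \ref{prop-int}, is to first approximate the optimal $v_{k-1}$ (and, for $k>1$, also $v_0,\dots,v_{k-2}$) in $L^1$ by Lipschitz functions, controlling the relevant seminorms; for a Lipschitz competitor $w$, Theorem \ref{thm:embedding-pq} (or the Hölder-type estimate in Proposition \ref{prop-int}) lets one bound $s_n(1-s_n)|w|_{W^{s_n,1+s_n(1-s_n)}(I)}$ uniformly and pass to the limit using the continuity of the weights $\alpha_n\to\alpha$, $s_n(1-s_n)\to s(1-s)$. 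A subtle point is whether $w\in W^{s,1+s(1-s)}(I)$ remains admissible after perturbing $s$; since Lipschitz functions lie in every $W^{t,p}(I)$ with $tp<1$, and the relevant exponents satisfy this for $n$ large, admissibility is not an issue. When $s=1$ the limiting competitor $v_{k-1}$ lies in $BV(I)$ and one invokes Theorem \ref{approx_frac_der1} (via Proposition \ref{prop-int}) to pass to the limit in the weighted Gagliardo seminorm; when $s=0$ the limit functional is $TGV^k_{\hat\alpha}$ and one takes $v_{k-1}^n\equiv 0$ (or a suitable vanishing sequence) so that the $s_n$-dependent terms disappear and the chain reproduces the optimal $TGV^k_{\hat\alpha}$-competitor. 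Assembling the term-by-term limsups and letting $\ep\to 0$ gives \textrm{(RS)}, and the diagonalization of the $\ep$- and $n$-approximations is the only genuinely delicate bookkeeping; this --- together with correctly tracking the coupling term $\alpha_{k-1}^n|(v_{k-2}^n)'-s_n v_{k-1}^n|_{\MM_b(I)}$ under both limits --- is where I expect the main technical effort to lie.
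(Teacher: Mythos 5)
Your proposal is correct and follows essentially the same route as the paper: the paper's own proof is a two-line delegation to Proposition \ref{compact_semi_para} (which is exactly your (LI) argument — near-optimal competitors, Proposition \ref{compact_seminorm} for the fractional block, weak* compactness and lower semicontinuity up the $BV$ chain, with the separate treatment of $s=1$ and the collapse to $TGV^k_{\hat\alpha}$ at $s=0$) and to Proposition \ref{new_equal} with the constant sequence $u_n=u$ (which is exactly your (RS) argument, the only cosmetic difference being that the paper plugs in smooth near-optimal competitors via the density theorems of \cite{dinezza.palatucci.valdinoci} rather than Lipschitz ones, and refers the $s=0,1$ endpoints back to Theorem \ref{intermediate}).
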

\begin{remark}
We recall that $\Gamma$-convergence is a variational convergence, originally introduced by E. De Giorgi and T. Franzoni in the seminar paper \cite{degiorgi.franzoni}, which guarantees, roughly speaking, convergence of minimizers of the sequence of functionals to minimizers of the $\Gamma$-limit. The first condition in Theorem \ref{thm:new-Gamma}, known as \emph{liminf inequality} ensures that the $\Gamma$-limit provides a lower bound for the asymptotic behavior of the $TGV^{k+s_n}_{\alpha_n}$ functionals, whereas the second condition, namely the \emph{existence of a recovery sequence} guarantees that this lower bound is attained. We refer to \cite{braides} and \cite{dalmaso} for a thorough discussion of the topic. 
\end{remark}

We subdivide the proof of Theorem \ref{thm:new-Gamma} into two propositions. The next result will be crucial for establishing the liminf inequality.

\begin{proposition}\label{compact_semi_para}
Let $k\in \N$ and $u_\eta\in L^2(I)$. Let $\{s_n\}\subset [0,1]$ and $\{\alpha_n\}\subset \R^{k+1}_+$ be such that $s_n\to s$, and $\alpha_n\to \alpha$.
Let $\{u_n\}\in BV(I)$ be such that
\be\label{BGV_up_bdd}
\sup_{n\in\mathbb N} \F^{k+s_n}_{\alpha_n}(u_n)<+\infty.
\ee
Then, there exists $u\in BV(I)$ such that, up to the extraction of a (non-relabeled) subsequence, there holds
\be\label{BV_conv_w}
u_n\wtos u \text{ in }BV(I).
\ee
In addition, if $s\in (0,1]$ there holds
\be
\label{star}
\abs{u}_{TGV^{k+s}_{{\alpha}}(I)}\leq\liminf_{n\to +\infty}\abs{u_n}_{TGV^{k+s_n}_{\alpha_n}(I)}.
\ee
If $s=0$ we have
\be
\label{star23}
\abs{u}_{TGV^{k}_{\hat{\alpha}}(I)}\leq\liminf_{n\to +\infty}\abs{u_n}_{TGV^{k+s_n}_{\alpha_n}(I)},\ee
where $\hat{\alpha}\in\R^{k}_+$ is the multi-index $\hat{\alpha}:=(\alpha_0,\dots,\alpha_{k-1})$.
\end{proposition}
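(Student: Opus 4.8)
The plan is to obtain the weak* $BV$-compactness \eqref{BV_conv_w} by combining the single-competitor compactness of Proposition~\ref{compact_seminorm} with the fidelity term of $\F^{k+s_n}_{\alpha_n}$ (indispensable, because the $TGV$ infimum controls $u_n$ only up to a polynomial), and then to prove \eqref{star} and \eqref{star23} by passing to the limit termwise in the $TGV$ infimum, which by Proposition~\ref{equiv_norm} is attained. We may assume $s_n\in(0,1)$ for every $n$ (if infinitely many $s_n$ lie in $\{0,1\}$ we pass to that subsequence and conclude by the classical $BV$-compactness and lower semicontinuity of the integer-order $TGV$) and $\alpha_{n,i}\ge c>0$ for all $n,i$ (true for $n$ large, since $\alpha\in\R_+^{k+1}$). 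From \eqref{BGV_up_bdd} we get $\norm{u_n}_{L^2(I)}\le C$, hence $\norm{u_n}_{L^1(I)}\le C$, and $\abs{u_n}_{TGV^{k+s_n}_{\alpha_n}(I)}\le C$. Writing $p_n:=1+s_n(1-s_n)$ and choosing minimizers $v_0^n,\dots,v_{k-2}^n\in BV(I)$, $v_{k-1}^n\in W^{s_n,p_n}(I)$ in Definition~\ref{TGV_fractional}, each summand is bounded, so $\abs{u_n'-v_0^n}_{\mathcal{M}_b(I)}\le C$, $\abs{(v_{i-1}^n)'-v_i^n}_{\mathcal{M}_b(I)}\le C$ for $1\le i\le k-2$, $\abs{(v_{k-2}^n)'-s_nv_{k-1}^n}_{\mathcal{M}_b(I)}\le C$, and $s_n(1-s_n)\big\{\abs{v_{k-1}^n}_{W^{s_n,p_n}(I)}+\abs{\int_I v_{k-1}^n}\big\}\le C$.

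The last bound is exactly hypothesis \eqref{total_s_bound} for $\{v_{k-1}^n\}$, so if $s\in(0,1]$ Proposition~\ref{compact_seminorm} yields, up to a subsequence, either $v_{k-1}^n\to v_{k-1}$ in $L^1(I)$ with $v_{k-1}\in W^{s,1+s(1-s)}(I)$ and \eqref{par_semi_lsc} (when $s<1$), or $v_{k-1}^n-\int_I v_{k-1}^n\to\bar v_{k-1}$ in $L^1(I)$ with $\bar v_{k-1}\in BV(I)$ and \eqref{eq:NUM4} (when $s=1$); for $s=0$ (not covered by Proposition~\ref{compact_seminorm}) one checks directly that, by Theorem~\ref{Poincare_TV} with $p=p_n$, $s_n\norm{v_{k-1}^n-\int_I v_{k-1}^n}_{L^1(I)}$ is bounded by $s_n\abs{v_{k-1}^n}_{W^{s_n,p_n}(I)}$ (which stays bounded, being $\le C/(1-s_n)$) times a Poincar\'e factor that vanishes as $s_n\to0$, whence $s_nv_{k-1}^n\to C_{k-1}$ in $L^1(I)$ for a constant $C_{k-1}$. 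Set $c_{k-1}^n:=s_n\int_I v_{k-1}^n$ if $s=1$ and $c_{k-1}^n:=0$ otherwise, so that $g_{k-1}^n:=s_nv_{k-1}^n-c_{k-1}^n$ is bounded in $L^1(I)$. Iterating the identity $w(x)=w(0^+)+(w'-z)((0,x))+\int_0^x z$ down the chain $u_n,v_0^n,\dots,v_{k-2}^n,s_nv_{k-1}^n$ expresses $u_n=\Pi_n+\rho_n$, where $\Pi_n$ is a polynomial of degree $\le k-1$ (degree $\le k$ when $s=1$, the top coefficient being a multiple of $c_{k-1}^n$) whose remaining coefficients are built from the traces $u_n(0^+),v_0^n(0^+),\dots,v_{k-2}^n(0^+)$, and $\rho_n$ is an iterated primitive of the uniformly bounded error measures $u_n'-v_0^n,\dots,(v_{k-2}^n)'-s_nv_{k-1}^n$ and of $g_{k-1}^n$, hence $\norm{\rho_n}_{L^\infty(I)}\le C$. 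Since $\norm{u_n}_{L^1(I)}\le C$ and polynomials of bounded degree on $I$ form a finite-dimensional space, $\Pi_n$ is bounded in $W^{1,\infty}(I)$; thus all traces $v_j^n(0^+)$ and (when $s=1$) $c_{k-1}^n$ are bounded, and feeding this back into the chain each $v_j^n$ ($0\le j\le k-2$) is bounded in $BV(I)$, and so is $u_n$. Up to a further subsequence we obtain \eqref{BV_conv_w}, $v_j^n\wtos v_j$ in $BV(I)$ for $0\le j\le k-2$, and $s_nv_{k-1}^n\to w_{k-1}$ in $L^1(I)$, where $w_{k-1}=sv_{k-1}$ with $v_{k-1}$ the $L^1$-limit of $v_{k-1}^n$ (belonging to $W^{s,1+s(1-s)}(I)$) if $s\in(0,1)$, $w_{k-1}=v_{k-1}:=\bar v_{k-1}+\lim_n c_{k-1}^n\in BV(I)$ if $s=1$, and $w_{k-1}=C_{k-1}$ if $s=0$.

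It remains to pass to the limit in $\abs{u_n}_{TGV^{k+s_n}_{\alpha_n}(I)}$, the sum of the nonnegative summands of the first paragraph. Lower semicontinuity of the total variation under the weak* convergences $u_n'-v_0^n\wtos u'-v_0$, $(v_{i-1}^n)'-v_i^n\wtos v_{i-1}'-v_i$, $(v_{k-2}^n)'-s_nv_{k-1}^n\wtos v_{k-2}'-w_{k-1}$ in $\mathcal{M}_b(I)$ controls the measure terms; \eqref{par_semi_lsc} (resp.\ \eqref{eq:NUM4} when $s=1$) controls the $W^{s_n,p_n}$-term; and $\alpha_{n,i}\to\alpha_i$, $s_n(1-s_n)\to s(1-s)$ handle the coefficients. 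When $s\in(0,1)$ the remaining term tends to $\alpha_{k-1}s(1-s)\abs{\int_I v_{k-1}}$ and the limit is the value of the $TGV^{k+s}_\alpha$-competitor $(v_0,\dots,v_{k-1})$ for $u$, so \eqref{star} holds; when $s=1$ that term carries the vanishing factor $(1-s_n)$ and disappears, $\abs{\bar v_{k-1}'}_{\mathcal{M}_b(I)}=\abs{v_{k-1}'}_{\mathcal{M}_b(I)}$, and the limit is the value of the integer $TGV^{k+1}_\alpha$-competitor $(v_0,\dots,v_{k-1})$, so \eqref{star} holds again; when $s=0$ we discard the nonnegative $W^{s_n,p_n}$-term, note that $s_nv_{k-1}^n\to C_{k-1}$ and the remaining term tends to $\alpha_{k-1}\abs{C_{k-1}}$, and obtain
\[
\liminf_{n\to\infty}\abs{u_n}_{TGV^{k+s_n}_{\alpha_n}(I)}\ge\alpha_0\abs{u'-v_0}_{\mathcal{M}_b(I)}+\dots+\alpha_{k-1}\big(\abs{v_{k-2}'-C_{k-1}}_{\mathcal{M}_b(I)}+\abs{C_{k-1}}\big)\ge\alpha_0\abs{u'-v_0}_{\mathcal{M}_b(I)}+\dots+\alpha_{k-1}\abs{v_{k-2}'}_{\mathcal{M}_b(I)}\ge\abs{u}_{TGV^{k}_{\hat\alpha}(I)},
\]
which is \eqref{star23}.

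The main obstacle is the $BV$-compactness step: for $k\ge2$ the $TGV$-chain determines $u_n$ only modulo a polynomial of degree $<k$ (or $\le k$ near $s=1$), so the uniform $BV$-bound genuinely relies on the fidelity term, and the polynomial coefficients must be tracked carefully along the chain; the behaviour of $\int_I v_{k-1}^n$ at the endpoints — Proposition~\ref{compact_seminorm} being inapplicable when $s=0$ and controlling $v_{k-1}^n$ only up to its mean when $s=1$ — is the other delicate point, and is what forces the separate Poincar\'e- and fidelity-based arguments, together with the triangle-inequality device $\abs{v_{k-2}'}_{\mathcal{M}_b(I)}\le\abs{v_{k-2}'-C_{k-1}}_{\mathcal{M}_b(I)}+\abs{C_{k-1}}$ that recovers $TGV^{k}_{\hat\alpha}$ in the limit $s=0$.
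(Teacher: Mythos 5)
Your proof is correct and follows essentially the same strategy as the paper's: take exact minimizers in the $TGV$ infimum (Proposition \ref{equiv_norm}), apply Proposition \ref{compact_seminorm} to the last competitor when $s\in(0,1]$ and a direct Poincar\'e argument when $s=0$, use the fidelity term for the missing $L^1$-control, and pass to the liminf termwise, closing the $s=0$ case with the triangle inequality $\abs{\cdot-C}+\abs{C}\ge\abs{\cdot}$. The only substantive differences are that the paper writes the argument for $k=1$ only (leaving $k>1$ as a "straightforward modification", which your polynomial/trace bookkeeping supplies explicitly), and that where the paper controls $s_n\int_I v_0^n\,dx$ at $s=1$ by testing against $x-\tfrac12$, you obtain the same bound from equivalence of norms on the finite-dimensional space of polynomial parts — an equivalent device.
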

\AAA
\begin{proof}
We prove the statement for $k=1$. The proof of the result for $k>1$ follows via straightforward modifications. \\

For $k=1$, we have $\seqn{\alpha^n}\subset \R^2$, and 
\be\label{bdd_para}
\fsp{\alpha^n_0,\alpha^n_1}\to \fsp{\alpha_0,\alpha_1}.
\ee
By Proposition \ref{equiv_norm} we deduce that there exists $v_0^n\in W^{s_n,1+s_n(1-s_n)}(I)$ such that
\begin{multline}
\label{eq:un-bd-un}
\abs{u_n}_{TGV_{\alpha^n}^{1+s_n}(I)} = \alpha^n_0\abs{u'_n-s_nv_0^n}_{\mathcal M_b(I)}+\alpha^n_1 s_n(1-s_n)\abs{v_0^n}_{W^{s_n,1+s_n(1-s_n)}(I)}\\
+\alpha_0^n s_n(1-s_n)\Big|\int_I v_0^n(x)\,dx\Big|\AAA.
\end{multline}
We preliminary observe that \eqref{BGV_up_bdd}, \eqref{bdd_para}, and \eqref{eq:un-bd-un} yield the existence of a constant $C$ such that
\be
\label{eq:estimate-A}
 s_n(1-s_n)\Big(\abs{v_0^n}_{W^{s_n,1+s_n(1-s_n)}(I)}+\Big|\int_I v_0^n(x)\,dx\Big|\Big)+\abs{u_n'-s_n v_0^n}_{\mathcal{M}_b(I)}\leq C\abs{u_n}_{TGV_{\alpha^n}^{k+s_n}(I)}\leq C\ee
for every $n\in \N$. For convenience of the reader we subdivide the proof into three steps.\\

\noindent\textbf{Step 1}: Assume first that $s\in (0,1)$. By Proposition \ref{compact_seminorm} there exists $v_0\in W^{s,1+s(1-s)}(I)$ such that
\be
\label{need41}
v_0^n\to v_0\quad\text{strongly in }L^1(I), 
\ee
and
\be
\label{eq:li-case1}
\liminf_{n\to \infty} s_n(1-s_n)\abs{v_0^n}_{W^{s_n,1+s_n(1-s_n)}(I)} \geq s(1-s)\abs{v_0}_{W^{s,1+s(1-s)}(I)}.
\ee
By \eqref{BGV_up_bdd}, \eqref{bdd_para}, \eqref{eq:un-bd-un}, and \eqref{need41} there exists a constant $C$ such that
\be\abs{u'_n}_{\mathcal{M}_b(I)}\leq C,\ee
and hence, again by \eqref{BGV_up_bdd},
\bes
 \sup_{n\in\mathbb N} \norm{u_n}_{BV(I)}\leq C\sup_{n\in\mathbb N} \norm{u_n}_{BGV_{\alpha^n}^{1+s_n}(I)}<+\infty
\ees
which implies \eqref{BV_conv_w}.

By \eqref{BV_conv_w}, \eqref{eq:un-bd-un}, \eqref{need41}, \eqref{eq:li-case1}, and since $0<s<1$, there holds
\begin{align*}
&\liminf_{n\to\infty} \abs{u_n}_{TGV_{\alpha^n}^{1+s_n}(I)}\geq\liminf_{n\to\infty} \alpha^n_0\abs{u'_n-s_nv_0^n}_{\mathcal M_b(I)}\\
&\qquad+\liminf_{n\to\infty}\alpha^n_1 s_n(1-s_n)\abs{v_0^n}_{W^{s_n,1+s_n(1-s_n)}(I)}+\liminf_{n\to\infty}\alpha^n_0 s_n(1-s_n)\abs{\int_I v_0^n(x)\,dx}\\
&\quad\geq \alpha_0\abs{u' -s v_0}_{\mathcal M_b(I)} + \alpha_1s(1-s)\abs{v_0}_{W^{s,1+s(1-s)}(I)}\\
&\label{eq:li-case1-2}\qquad +\alpha_0 s(1-s)\abs{\int_I v_0(x)\,dx}\geq \abs{u}_{TGV_{ \alpha}^{1+s}(I)},
\end{align*}
where in the last inequality we used the definition of the $TGV_{\alpha}^{1+s}$-seminorm. In particular, we deduce \eqref{star}.\\

\noindent\textbf{Step 2}: Consider now the case in which $s=1$. In view of Proposition \ref{compact_seminorm}, estimate \eqref{eq:estimate-A} yields the existence of a map $\tilde{v}_0\in BV(I)$ such that
\be
\label{eq:A}
v_0^n-\int_I v_0^n(x)\,dx\to \tilde{v}_0\quad\text{strongly in }L^1(I)
\ee
and
\be
\label{eq:B}
\abs{\tilde{v}_0'}_{\mathcal{M}_b(I)}\leq \liminf_{n\to +\infty} s_n(1-s_n)\abs{v_0^n}_{W^{s_n,1+s_n(1-s_n)}(I)}.
\ee
On the other hand, by \eqref{eq:estimate-A} there holds
\begin{align*}
\abs{u'_n-s_n\int_I v_0^n(x)\,dx}_{\mathcal{M}_b(I)}\leq \abs{u_n'-s_n v_0^n}_{\mathcal{M}_b(I)}+s_n\norm{v_0^n-\int_I v_0^n(x)\,dx}_{L^1(I)}\leq C
\end{align*}
for every $n\in N$. Thus, there exists $\tilde{u}\in BV(I)$ such that
\be
\label{eq:new-star}
u_n-u_n(0)-s_n\Big(\int_I v_0^n(x)\,dx\Big)x\wtos \tilde{u}\quad\text{weakly* in }BV(I). 
\ee

By \eqref{BGV_up_bdd} and by Definition \ref{TGV_fractional}, we have 
\be
\label{eq:starpage16}
\sup_{n\geq 1}\norm{u_n}_{L^1(I)}\leq C.
\ee
In view of \eqref{eq:new-star}, testing the map $u_n-u_n(0)-s_n\Big(\int_I v_0^n(x)\,dx\Big)x$ against the function $x-\frac12$, we obtain that
\be
\lim_{n\to +\infty}\int_I u_n(x)\left(x-\frac12\right)\,dx+\frac{s_n}{6}\int_I v_0^n(x)\,dx=\int_I \tilde{u}(x)\left(x-\frac12\right)\,dx.
\ee
Thus, by \eqref{eq:starpage16},
\be
\label{eq:C}
\frac{s_n}{6}\Big|\int_I v_0^n(x)\,dx\Big|\leq \abs{\int_I u_n(x)\left(x-\frac12\right)\,dx+\frac{s_n}{6}\int_I v_0^n(x)\,dx}+\frac12\norm{u_n}_{L^1(I)}\leq C
\ee

for every $n\in \N$. By combining \eqref{eq:A}, \eqref{eq:B}, \eqref{eq:new-star}, and \eqref{eq:C} we deduce that there exists $v_0\in BV(I)$ such that
\be
\label{eq:D}
v_0^n\to v_0\quad\text{strongly in }L^1(I),
\ee
and
\be
\label{eq:E}
\abs{v_0'}_{\mathcal{M}_b(I)}\leq \liminf_{n\to +\infty} s_n(1-s_n)\abs{v_0^n}_{W^{s_n,1+s_n(1-s_n)}(I)}.
\ee
In particular, by combining \eqref{eq:estimate-A}, \eqref{eq:D}, and \eqref{eq:E} we have that
\be\abs{u_n'}_{\mathcal{M}_b(I)}\leq C\ee
for every $n\in \N$, which by \eqref{BGV_up_bdd}
 yields \eqref{BV_conv_w}.\\

\noindent\textbf{Step 3}: Consider finally the case in which $s=0$. In view of \eqref{eq:estimate-A}, and by Theorem \ref{Poincare_TV} there holds 
\be
\label{eq:new-dot}
\norm{v_0^n-\int_I v_0^n(x)\,dx}_{L^1(I)}\leq C
\ee
for every $n\in \N$. On the other hand, \eqref{eq:estimate-A} yields
\be
\label{eq:new-2dots}
s_n\Big|\int_I v_0^n(x)\,dx\Big|\leq C
\ee
for every $n\in \N$. Combining \eqref{eq:new-dot} and \eqref{eq:new-2dots} we conclude that there exists a constant $\lambda\in \R$ such that, 
up to the extraction of a (non-relabeled) subsequence there holds
\be
\label{eq:new-3dots}
s_n v_0^n\to \lambda\quad\text{strongly in }L^1(I).
\ee
As a result, in view of \eqref{BGV_up_bdd}, Definition \ref{TGV_fractional}, and \eqref{eq:estimate-A} we deduce the existence of a map $u\in BV(I)$ such that, up to the extraction of a (non-relabeled) subsequence,
\be
\label{eq:NUMP13}
u_n\wtos u\quad\text{weakly* in }BV(I).
\ee
Hence, by \eqref{eq:un-bd-un}, \eqref{eq:new-3dots}, and \eqref{eq:NUMP13} we have
\begin{align}
&\liminf_{n\to\infty} \abs{u_n}_{TGV_{\alpha^n}^{1+s_n}(I)}\geq\liminf_{n\to\infty}\alpha^n_0\abs{u'_n-s_nv_0^n}_{\mathcal M_b(I)}\\
&+\liminf_{n\to\infty}\alpha^n_1 s_n(1-s_n)\abs{v_0^n}_{W^{s_n,1+s_n(1-s_n)}(I)}+\liminf_{n\to\infty}\alpha^n_0 s_n(1-s_n)\abs{\int_I v_0^n(x)\,dx}\\
&\geq \alpha_0\abs{u'-\lambda}_{\mathcal M_b(I)}+\alpha_0|\lambda|\geq \alpha_0\abs{u'}_{\mathcal M_b(I)},
\end{align}
which in turn implies \eqref{star23}. This concludes the proof of the proposition.
\end{proof}
The following result is instrumental for the construction of a recovery sequence.

\begin{proposition}\label{new_equal}
Let $k\in \N$ and $u_\eta\in L^2(I)$. Let $\{s_n\}\subset [0,1]$ and $\{\alpha_n\}\subset \R^{k+1}_+$ be such that $s_n\to s$, and $\alpha_n\to \alpha$. Let $u\in BV(I)$. Then, if $s\in (0,1]$ there holds
\be\label{new_equal1}
\limn \abs{u}_{TGV^{k+s_n}_{\alpha_n}(I)} = \abs{u}_{TGV^{k+s}_{\alpha}(I)},
\ee
and if $s=0$,
\be\label{new_equal2}
\limn \abs{u}_{TGV^{k+s_n}_{\alpha_n}(I)} = \abs{u}_{TGV^{k+s}_{\hat{\alpha}}(I)},
\ee
where $\hat{\alpha}\in\R^{k}_+$ is the multi-index $\hat{\alpha}:=(\alpha_0,\dots,\alpha_{k-1})$.
\end{proposition}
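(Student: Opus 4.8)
The plan is to prove \eqref{new_equal1} and \eqref{new_equal2} by establishing two inequalities, namely
\[
\limsup_{n\to\infty}\abs{u}_{TGV^{k+s_n}_{\alpha_n}(I)} \le \abs{u}_{TGV^{k+s}_{\alpha}(I)}
\quad\text{and}\quad
\liminf_{n\to\infty}\abs{u}_{TGV^{k+s_n}_{\alpha_n}(I)} \ge \abs{u}_{TGV^{k+s}_{\alpha}(I)},
\]
(with $\alpha$ replaced by $\hat\alpha$ when $s=0$), since these two together give the limit. The $\liminf$ inequality is essentially already contained in Proposition \ref{compact_semi_para}: applying it to the constant sequence $u_n\equiv u$ (which trivially satisfies \eqref{BGV_up_bdd} by Proposition \ref{equivalent_semi} once the $\limsup$ bound is known, or one simply runs the argument with $u_n=u$) yields exactly \eqref{star} when $s\in(0,1]$ and \eqref{star23} when $s=0$, i.e. the desired lower bounds. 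So the real content is the $\limsup$ inequality, which is the construction of near-optimal competitors at level $s_n$ starting from a near-optimal competitor at level $s$.

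For the $\limsup$ bound when $s\in(0,1)$, I would fix $\ep>0$, and by Proposition \ref{equiv_norm} pick an optimal (or $\ep$-optimal) competitor for $\abs{u}_{TGV^{k+s}_{\alpha}(I)}$; for $k=1$ this is $v_0\in W^{s,1+s(1-s)}(I)$ realizing the infimum. The task is to produce a sequence $v_0^n\in W^{s_n,1+s_n(1-s_n)}(I)$ with $v_0^n\to v_0$ in $L^1$ and
\[
\limsup_{n\to\infty} s_n(1-s_n)\abs{v_0^n}_{W^{s_n,1+s_n(1-s_n)}(I)} \le s(1-s)\abs{v_0}_{W^{s,1+s(1-s)}(I)}.
\]
A clean way is to first reduce to a smooth (say $W^{1,\infty}$, or at least Lipschitz) competitor by density — replacing $v_0$ by $v_0^\delta\in W^{1,\infty}(I)$ close in $L^1$ and with comparable (up to $\ep$) Gagliardo energy, using the embedding/approximation facts from Section \ref{prelimy} and an argument in the spirit of Proposition \ref{prop-int} — and then simply take $v_0^n:=v_0^\delta$ for all $n$. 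For a fixed Lipschitz function the map $s\mapsto s(1-s)\abs{v_0^\delta}_{W^{s,1+s(1-s)}(I)}$ is continuous on $(0,1)$ (dominated convergence on the fixed kernel, since the Lipschitz bound makes the integrand dominated uniformly for $s$ in a compact subinterval), so $s_n(1-s_n)\abs{v_0^\delta}_{W^{s_n,1+s_n(1-s_n)}(I)}\to s(1-s)\abs{v_0^\delta}_{W^{s,1+s(1-s)}(I)}$. Combining with $\alpha_n\to\alpha$ and the continuity of $\abs{\cdot}_{\mathcal M_b(I)}$ terms in the competitor, one gets $\limsup_n\abs{u}_{TGV^{k+s_n}_{\alpha_n}(I)}\le \abs{u}_{TGV^{k+s}_{\alpha}(I)}+c\ep$, and $\ep\to 0$ finishes it. The endpoint cases $s=1$ and $s=0$ are handled by the same scheme but feeding in, respectively, the asymptotic identification of $TGV^{k+1}$ from Theorem \ref{intermediate}/Corollary \ref{cor:higher-order} (the $\limsup$ part \eqref{limsup1} already does the $s_n\nearrow 1$ direction, and for general sequences $s_n\to 1$ the same Lipschitz-competitor trick plus Proposition \ref{prop-int}-type estimates apply), and the $s\searrow 0$ reduction, where the competitor's $v_{k-1}$-term drops out in the limit and one recovers $\abs{u}_{TGV^{k}_{\hat\alpha}(I)}$. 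For $k>1$ the outer competitors $v_0,\dots,v_{k-2}\in BV(I)$ are simply kept fixed along the sequence, since their energies (total variations) do not depend on $s_n$ at all; only the innermost $W^{s_n,\cdot}$-term needs the continuity-in-$s$ argument.

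The main obstacle I anticipate is the continuity (or at least upper semicontinuity) of $s\mapsto s(1-s)\abs{v}_{W^{s,1+s(1-s)}(I)}$ along $s_n\to s$ for a \emph{fixed} competitor, in particular near the endpoints $s=0,1$ where the exponent $1+s(1-s)$ degenerates to $1$ and the weight $s(1-s)$ vanishes. Working with a Lipschitz representative tames the $x\to y$ singularity of the kernel (as in the proof of Proposition \ref{prop-int}), and splitting the double integral into a near-diagonal and an off-diagonal piece lets one apply dominated convergence on each; but the bookkeeping of the two moving exponents $1+s_n(1-s_n)$ in both the numerator power and the kernel exponent, together with the vanishing weight, requires care. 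A secondary point is checking that the density/approximation step (replacing the optimal $v_0\in W^{s,1+s(1-s)}$ by a Lipschitz function with controlled Gagliardo seminorm) is legitimate — this should follow from mollification together with Theorems \ref{thm:embedding-pq} and \ref{extension}, but one must verify the seminorm does not blow up, which is exactly the type of estimate already carried out in Proposition \ref{prop-int}. Once these two technical points are in place, the rest is a routine assembly of the $\limsup$/$\liminf$ sandwich.
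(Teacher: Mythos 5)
Your proposal follows essentially the same route as the paper's proof: the liminf inequality is obtained from Proposition \ref{compact_semi_para} applied to the constant sequence $u_n\equiv u$, and the limsup inequality by fixing an $\ep$-optimal regular competitor at level $s$ and reusing it unchanged at every level $s_n$ (the paper takes $v_0\in C^{\infty}(I)\cap W^{s,1+s(1-s)}(I)$ via the density results of \cite{dinezza.palatucci.valdinoci}, you take a Lipschitz one), with the endpoint cases $s=0,1$ deferred to Theorem \ref{intermediate}. Your explicit dominated-convergence justification of the continuity of $s\mapsto s(1-s)\abs{v_0}_{W^{s,1+s(1-s)}(I)}$ for a fixed regular competitor spells out a step the paper leaves implicit, so the argument is correct.
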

\begin{proof}
We prove the proposition for $k=1$. The thesis for $k>1$ can be proven by analogous arguments. The special cases $s=0$ and $s=1$ have already been analyzed in Theorem \ref{intermediate}, in the situation in which $\alpha_n=\alpha$ for all $n\in \N$. The thesis for general sequences $\{\alpha_n\}$ follows by straightforward adaptations. Therefore, without loss of generality, we can assume that $s\in (0,1)$.
As a consequence of Proposition \ref{compact_semi_para} we immediately have that
\be\label{new_equal_liminf}
\liminf_{n\to\infty}\abs{u}_{TGV^{1+s_n}_{\alpha}(I)} \geq \abs{u}_{TGV^{1+s}_{\alpha}(I)}.
\ee
To prove the opposite inequality, fix $\e>0$. By the definition of the $TGV^{1+s}_{\alpha}(I)$-seminorm, and in view of \cite[Theorems 2.4 and 5.4]{dinezza.palatucci.valdinoci}, there exists $v_0\in C^{\infty}(I)\cap W^{s,1+s(1-s)}(I)$ such that 
\be
\abs{u}_{TGV^{1+s}(I)}\geq \alpha_0{\abs{u'- s v_0}_{\mathcal{M}_b(I)}+\alpha_1{s(1-s)}\abs{v_0}_{W^{s,1+s(1-s)}(I)}}+\alpha_0{s(1-s)}\abs{\int_{I}v_0(x)\,dx}-\e.
\ee

In particular, there holds $v_0\in C^{\infty}(I)\cap W^{s_n,1+s_n(1-s_n)}(I)$ for every $n\in \N$. Hence 
\begin{align}
&\abs{u}_{TGV^{1+s_n}_{\alpha}(I)}\leq  \alpha_0\abs{u'- s_nv_0}_{\mathcal{M}_b(I)}\\
&\quad+\alpha_1{s_n(1-s_n)}\abs{v_0}_{W^{s_n,1+s_n(1-s_n)}(I)}+\alpha_0{s_n(1-s_n)}\abs{\int_{I}v_0(x)\,dx},
\end{align}
for every $n\in \N$, and
\be\label{limsup-new}
\limsup_{n\to\infty}\abs{u}_{TGV^{1+s_n}_{\alpha}(I)}\leq \abs{u}_{TGV^{1+s}_{\alpha}(I)}+\e.
\ee
The thesis follows by the arbitrariness of $\e$, and by combining \eqref{new_equal_liminf} and \eqref{limsup-new}.
\end{proof}
We conclude this section by proving Theorem \ref{thm:new-Gamma}.
\begin{proof}[Proof of Theorem \ref{thm:new-Gamma}]
Property {\rm (LI)} is a direct consequence of Proposition \ref{compact_semi_para}. Property {\rm (RS)} follows by Proposition \ref{new_equal}, choosing $u_n=u$ for every $n\in \N$.
\end{proof}

%
%
%
%
\section{The bilevel training scheme equipped with $TGV^r$ regularizer}\label{BLSFRAC}
Let $r\in[1,+\infty)$ be given and recall ${\lfloor r\rfloor}$ denote the largest integer smaller than or equal to $r$. We propose the following training scheme $(\mathcal R)$ which takes into account the order of derivation $r$ of the regularizer and the parameter $ \alpha\in \R^{\lfloor r\rfloor+1}_+$ simultaneously. We restrict our analysis to the case in which $\alpha$ and $r$ satisfy the \emph{box constraint} \be\label{box_const}
(\alpha, r)\in [P,1/P]^{\lfloor r\rfloor +1}\times[1,1/P]
\ee
where $P\in(0,1)$ small is a fixed real number.\\

Our new training scheme $(\mathcal R)$ is defined as follows:
\begin{flalign}
\text{Level 1. }&\,\,\,\,\,\,\,\,\,\,\,\,\,\,\,\,\,\,\,\,\,\,\,\,\,\,\,\,(\tilde{\alpha},\tilde{r}):=\argmin\flp{\mathcal I(\alpha,r),\,\,(\alpha, r)\in [P,1/P]^{\lfloor r\rfloor +1}\times[1,1/P]},\label{frac_para_bi}&\\
\text{Level 2. }&\,\,\,\,\,\,\,\,\,\,\,\,\,\,\,\,\,\,\,\,\,\,\,\,\,\,\,\,u_{\ta,r}:=\argmin\flp{\F^r_{\alpha}(u):\,\,{u\in BV(I)}}\label{frac_para_bi2}&
\end{flalign}
where $\mathcal I(\alpha,r)$, defined as
\be\label{define_cost_fun}
\mathcal I(\alpha,r):=\norm{u_{\alpha,r}-u_c}_{L^2(I)}^2
\ee
is the \emph{cost function} associated to the training scheme $(\mathcal R)$, $\F^r_{\alpha}$ is introduced in Definition \ref{def:fractional TGV functional}, the map $u_c\in L^2(I)$ represents a noise-free test signal, and $u_\eta\in L^2(I)$ is the noise (corrupted) signal.\\

Note that we only allow the parameters $\ta$ and the order $r$ of regularizers to lie within a prescribed finite range. This is needed to force the optimal reconstructed signal $u_{\tilde{\alpha},\tilde{r}}$ to remain inside our proposed space $BV(I)$ (see Proposition \ref{compact_semi_para}). In particular, if some of the components of $\tilde{\alpha}$ blow up to $\infty$, we might end up in the space $W^{r,1}(I)$, which is outside the purview of this paper.
We point out that $P$ can be chosen as small as the user wants. Thus, despite the box constraint, our analysis still incorporates a large class of regularizers, such as $TV$ and $TGV^2$ (see, e.g., \cite{2015arXiv150807243D}).\\
%

Before we state the main theorem of this section, we prove a technical lemma and show that \eqref{frac_para_bi2} has a unique minimizer for each given $(\alpha,r)$. 
\begin{lemma}
\label{ex-sol-sec-lev}
For every $r\in [1,\mathrm{1/P}]$ and $\alpha\in \R^{\lfloor r\rfloor+1}_+$, there exists a unique $u_{\alpha,r}\in BV(I)$ solving the minimization problem \eqref{frac_para_bi2}. 
\end{lemma}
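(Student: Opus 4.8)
The plan is to prove existence and uniqueness separately, via the direct method of the calculus of variations together with strict convexity. First I would fix $r\in[1,1/P]$ and $\alpha\in\R^{\lfloor r\rfloor+1}_+$ and observe that the functional $\F^r_\alpha(u)=\int_I|u-u_\eta|^2\,dx+\abs{u}_{TGV^r_\alpha(I)}$ is finite on the whole of $BV(I)$, since by Proposition \ref{equivalent_semi} (and the bound \eqref{preliminary} when $\lfloor r\rfloor=1$, together with its higher-order analogues) every $u\in BV(I)$ has finite $TGV^r_\alpha$-seminorm. Hence $\inf_{u\in BV(I)}\F^r_\alpha(u)=:m<+\infty$ is well defined, and taking a minimizing sequence $\{u_n\}\subset BV(I)$ we have $\sup_n\F^r_\alpha(u_n)<+\infty$.

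Next I would extract compactness. Since $\F^r_\alpha(u_n)$ is bounded, the $L^2(I)$-term is bounded, so $\{u_n\}$ is bounded in $L^2(I)\hookrightarrow L^1(I)$; moreover, by Proposition \ref{equivalent_semi} the $TGV^r_\alpha$-seminorm controls $\abs{u_n'}_{\mathcal M_b(I)}$ up to a constant, so $\{u_n\}$ is bounded in $BV(I)$. By weak* compactness in $BV(I)$ there is $u_{\alpha,r}\in BV(I)$ with (a non-relabeled subsequence) $u_n\wtos u_{\alpha,r}$ weakly* in $BV(I)$, hence $u_n\to u_{\alpha,r}$ strongly in $L^1(I)$. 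Lower semicontinuity then finishes the existence part: the fidelity term $\int_I|u-u_\eta|^2\,dx$ is convex and strongly continuous on $L^2$, hence weakly lower semicontinuous, and one checks $u_n\to u_{\alpha,r}$ also weakly in $L^2$ (the $L^2$-bound upgrades the $L^1$-convergence); the seminorm term is lower semicontinuous along weak* convergence in $BV(I)$ by Proposition \ref{compact_semi_para} applied with the constant sequences $s_n\equiv s$, $\alpha_n\equiv\alpha$ (which gives exactly $\abs{u_{\alpha,r}}_{TGV^{r}_\alpha(I)}\le\liminf_n\abs{u_n}_{TGV^{r}_\alpha(I)}$). Therefore $\F^r_\alpha(u_{\alpha,r})\le\liminf_n\F^r_\alpha(u_n)=m$, so $u_{\alpha,r}$ is a minimizer.

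For uniqueness I would use strict convexity of $u\mapsto\int_I|u-u_\eta|^2\,dx$ together with convexity of $u\mapsto\abs{u}_{TGV^r_\alpha(I)}$. Convexity of the seminorm follows from its definition as an infimum of convex functionals over a linear constraint: each competing expression is convex jointly in $u$ and the auxiliary variables $v_0,\dots,v_{\lfloor r\rfloor-1}$ (sums of the convex maps $w\mapsto\abs{w'-\cdots}_{\mathcal M_b(I)}$, the convex seminorm $\abs{\cdot}_{W^{s,1+s(1-s)}(I)}$, and the convex functional $v\mapsto|\int_I v\,dx|$, all precomposed with linear maps), and the partial infimum of a jointly convex functional is convex. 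Consequently, if $u_1\ne u_2$ were two minimizers, then for $u=\tfrac12(u_1+u_2)$ the fidelity term would be \emph{strictly} smaller than $\tfrac12\F^r_\alpha(u_1)+\tfrac12\F^r_\alpha(u_2)$'s fidelity part while the seminorm part is $\le$ the average, contradicting minimality; hence $u_1=u_2$. The one mild subtlety, and the step I expect to require the most care, is justifying the weak lower semicontinuity of the fidelity term along a sequence that is only \emph{a priori} weak* convergent in $BV(I)$: one must observe that the uniform $\F^r_\alpha$-bound yields a uniform $L^2(I)$-bound, so the weak* limit in $BV(I)$ coincides with the weak $L^2$-limit, after which weak lower semicontinuity of the $L^2$-norm applies. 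Everything else is routine.
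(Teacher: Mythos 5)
Your proof is correct and follows essentially the same route as the paper: a minimizing sequence is bounded in $BV(I)$ via Proposition \ref{equivalent_semi} together with the fidelity bound, weak* compactness and the lower semicontinuity of the seminorm from Proposition \ref{compact_semi_para} give existence, and strict convexity of the $L^2$-fidelity term plus convexity of the $TGV^r_\alpha$-seminorm give uniqueness. The only (harmless) difference is that the paper passes the fidelity term to the limit via the compact embedding of $BV(I)$ into $L^2(I)$ in one dimension (strong $L^2$ convergence), whereas you use weak $L^2$ lower semicontinuity; both work, and your explicit justification of the convexity of the seminorm as a partial infimum of a jointly convex functional is a detail the paper leaves implicit.
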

\begin{proof}
Let $\seqn{u_n}\subset BV(I)$ be a minimizing sequence for \eqref{frac_para_bi2}. By Proposition \ref{equivalent_semi}, $\seqn{u_n}$ is uniformly bounded in $BV(I)$. Thus there exists ${u}_{\alpha,r}\in BV(I)$ such that
\be u_n\wtos {u}_{\alpha,r}\quad\text{weakly* in }BV(I),\ee
and hence also strongly in $L^2(I)$. The minimality of ${u}_{\alpha,r}$ follows then by Proposition \ref{compact_semi_para}, whereas the uniqueness of the minimum is a consequence of the strict convexity of the functional.
\end{proof}

\AAA
The next result guarantees existence of solutions to our training scheme.

\begin{theorem}\label{main_result}
Let $u_\eta, u_c\in BV(I)$ be given. Under the box constraint \eqref{box_const}, the training scheme $(\mathcal R)$ admits at least one solution $(\tilde{\alpha},\tilde{r})\in [P,1/P]^{\lfloor \tilde{r}\rfloor +1}\times[1,1/P]$ and provides an associated optimally reconstructed signal $u_{\tilde{\alpha},\tilde{r}}\in BV(I)$.
\end{theorem}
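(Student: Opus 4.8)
The plan is to use the direct method of the calculus of variations at Level 1, combined with the $\Gamma$-convergence result of Theorem \ref{thm:new-Gamma} and the existence/uniqueness result of Lemma \ref{ex-sol-sec-lev}. First I would take a minimizing sequence $\{(\alpha_n, r_n)\}$ for the cost function $\mathcal I$ over the admissible set $K:=\{(\alpha,r):\ (\alpha,r)\in[P,1/P]^{\lfloor r\rfloor+1}\times[1,1/P]\}$. The order variable $r_n$ lies in the compact interval $[1,1/P]$, so up to a subsequence $r_n\to \tilde r\in[1,1/P]$. The subtle point is that the dimension $\lfloor r_n\rfloor+1$ of the parameter vector $\alpha_n$ may jump as $r_n$ crosses an integer; however, since $\lfloor\cdot\rfloor$ takes only finitely many values on $[1,1/P]$, I can pass to a further subsequence along which $\lfloor r_n\rfloor$ is constantly equal to some integer $k$, and along which moreover $r_n$ approaches $\tilde r$ either from above or from below. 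Then $\alpha_n\in[P,1/P]^{k+1}$ is bounded, so up to a subsequence $\alpha_n\to\tilde\alpha\in[P,1/P]^{k+1}$.

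Next I would write $s_n:=r_n-k$, so $s_n\to s:=\tilde r-k\in[0,1]$, and consider the second-level minimizers $u_n:=u_{\alpha_n,r_n}$ given by Lemma \ref{ex-sol-sec-lev}. Testing the second-level functional against the constant competitor (or against $u_\eta$ itself) gives a uniform bound $\sup_n\F^{k+s_n}_{\alpha_n}(u_n)<+\infty$, whence by Proposition \ref{compact_semi_para} there exists $u_*\in BV(I)$ with $u_n\wtos u_*$ weakly* in $BV(I)$, hence strongly in $L^2(I)$, up to a subsequence. I then need to show that $u_*$ is the Level 2 minimizer $u_{\tilde\alpha,\tilde r}$ associated to the limit parameters. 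This is where one distinguishes the cases $s\in(0,1]$ and $s=0$: in the first case Theorem \ref{thm:new-Gamma} gives $\Gamma$-convergence of $\F^{k+s_n}_{\alpha_n}$ to $\F^{k+s}_{\alpha}$ in the weak* topology of $BV(I)$, and the standard fact that $\Gamma$-limits of equicoercive functionals have minimizers which are limits of minimizers yields that $u_*$ minimizes $\F^{k+s}_{\tilde\alpha}=\F^{\tilde r}_{\tilde\alpha}$; by the uniqueness in Lemma \ref{ex-sol-sec-lev}, $u_*=u_{\tilde\alpha,\tilde r}$. In the case $s=0$, Theorem \ref{thm:new-Gamma} gives the analogous statement with $\alpha$ replaced by $\hat\alpha=(\alpha_0,\dots,\alpha_{k-1})$, so the limit functional is $\F^{k}_{\hat{\tilde\alpha}}$, i.e.\ the order-$k$ functional; correspondingly the ``limit order'' is the integer $k=\tilde r$, and again $u_*$ is its unique minimizer, which we relabel $u_{\tilde\alpha,\tilde r}$ (noting $\tilde r=k$ here, with the convention that only the first $k$ components of $\tilde\alpha$ matter).

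Finally, lower semicontinuity of the $L^2$-norm under strong $L^2$-convergence gives
\[
\mathcal I(\tilde\alpha,\tilde r)=\norm{u_*-u_c}_{L^2(I)}^2\leq \liminf_{n\to\infty}\norm{u_n-u_c}_{L^2(I)}^2=\liminf_{n\to\infty}\mathcal I(\alpha_n,r_n)=\inf_{K}\mathcal I,
\]
so $(\tilde\alpha,\tilde r)$ is a solution of the training scheme, with associated reconstruction $u_{\tilde\alpha,\tilde r}=u_*\in BV(I)$. I expect the main obstacle to be bookkeeping the discontinuity of the map $r\mapsto\lfloor r\rfloor+1$, i.e.\ ensuring that after passing to subsequences the parameter vectors live in a fixed Euclidean space and that the $\Gamma$-convergence statement of Theorem \ref{thm:new-Gamma} applies verbatim (including the separate treatment of the case where $r_n$ decreases to an integer, for which the relevant limit is the lower-order functional); everything else is a routine application of the direct method together with the already established $\Gamma$-convergence and equicoercivity.
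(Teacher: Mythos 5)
Your proposal is correct and follows essentially the same route as the paper: a minimizing sequence for the cost, compactness of the box constraint after fixing $\lfloor r_n\rfloor$ along a subsequence, a uniform bound on the Level 2 minimizers obtained by testing against $u_\eta$, compactness and the liminf inequality from Proposition \ref{compact_semi_para} together with the pointwise convergence of Proposition \ref{new_equal} (i.e.\ Theorem \ref{thm:new-Gamma}) to identify the limit as $u_{\tilde\alpha,\tilde r}$, and lower semicontinuity of the $L^2$-distance to $u_c$ to conclude. Your explicit bookkeeping of the $s=0$ case and of the dimension jump of the parameter vector matches the paper's case distinction, so no further comparison is needed.
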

\begin{proof}
The result follows by Theorem \ref{thm:new-Gamma} and by standard properties of $\Gamma$-convergence. We highlight the main steps for convenience of the reader. \\\\
Let $\seqn{(\ta_n,r_n)}$ be a minimizing sequence for \eqref{frac_para_bi} with $\seqn{r_n}\subset [1,1/P]$. Then, up to the extraction of a (non-relabeled) subsequence, there exists $\tilde r\in[1,1/P]$ such that $r_n\to\tilde r$, and $\tilde N\in\N$ large enough such that for all $n\geq \tilde N$, one the following two cases arises.
\begin{enumerate}[1.]
\item
 $r_n\in[\lfloor \tilde r \rfloor,\lfloor \tilde r \rfloor+1]$, or
 \item
  $r_n\in[\lfloor \tilde r \rfloor -1,\lfloor \tilde r \rfloor]$.
 \end{enumerate}
Suppose statement 1 holds (statement 2 can be handled by an analogous argument), then for $n\geq \tilde N$, we have $(\ta_n,r_n)\subset [P,1/P]^{\lfloor \tilde r\rfloor +1}\times[\lfloor \tilde r \rfloor,\lfloor \tilde r \rfloor+1]$ for all $n\geq \tilde N$, and, up to extracting a further subsequence, $\alpha_n\to\tilde \alpha$ with $\tilde \alpha\in [P,1/P]^{\lfloor \tilde r\rfloor +1}$. Let $u_{\alpha_n,r_n}$ be the unique solution to \eqref{frac_para_bi2} provided by Lemma \ref{ex-sol-sec-lev}. By \eqref{frac_para_bi2} and Proposition \ref{equivalent_semi}, there holds
\bes
\norm{u_{\alpha_n,r_n}}_{BGV^{r_n}_{\alpha_n}(I)}\leq \abs{u_\eta}_{TGV^{r_n}_{\alpha_n}(I)}\leq C\abs{u_\eta'}_{\mathcal{M}_b(I)}
\ees
for all $n\geq \tilde N$. 
%
%
Next, in view of Proposition \ref{compact_semi_para}, there exists a $\tilde{u}\in BGV^{\tilde{r}}_{\tilde{\alpha}}(I)$ 
such that
\bes
u_{\alpha_n,r_n}\wtos \tilde{u}\quad\text{weakly* in }BV(I).
\ees
Thus, in particular, strongly in $L^2(I)$.\\

We claim that 
\bes
\tilde{u}=u_{\tilde{\alpha},\tilde{r}}=\argmin\flp{\F^{\tilde{r}}_{\tilde{\alpha}}(u):\,\,u\in BV(I)}.
\ees
Indeed, by Propositions \ref{compact_semi_para} and \ref{new_equal} there holds
\begin{multline}
 \int_I |\tilde{u}-u_\eta|^2\,dx+\abs{\tilde{u}}_{TGV_{\tilde{\alpha}}^{\tilde{r}}(I)}\leq \liminf_{n\to +\infty}\int_I |u_{\alpha_n, r_n}-u_\eta|^2\,dx+\abs{u_{\alpha_n, r_n}}_{TGV_{\alpha_n}^{r_n}(I)}\\
 \leq \lim_{n\to +\infty}\fmp{\int_I |v-u_\eta|^2\,dx+\abs{v}_{TGV_{\alpha_n}^{r_n}(I)}}\leq \int_I\abs{v-u_\eta}^2dx+\limsup_{n\to\infty}\abs{v}_{TGV_{\alpha_n}^{r_n}(I)}\\
 =\int_I |v-u_\eta|^2\,dx+\abs{v}_{TGV_{\tilde{\alpha}}^{\tilde{r}}(I)}
\end{multline}
for every $v\in BV(I)$, where at the last equality we used \eqref{new_equal1} or \eqref{new_equal2}. This completes the proof of the claim and of the theorem.
\end{proof}

\begin{remark}\label{pract_box_c}
The box constraint \eqref{box_const} is only used to guarantee that a minimizing sequence $\seqn{(\alpha_n,r_n)}$ has a convergent subsequence whose limit is bounded away from $0$. Alternatively, different box-constraints for each parameter $\alpha$ and $r$ might be enforced, such as 
\be
(\alpha,r)\in[P_1,Q_1]\times [P_2,Q_2]\times\cdots\times [1,Q_{\lfloor r\rfloor+2}]
\ee
where $0<P_i<Q_i<+\infty$, $i=1,\ldots, {\lfloor r\rfloor+1}$ and $1<Q_{\lfloor r\rfloor+2}<+\infty$.
\end{remark}

\section{Examples and insight}\label{num_sim}
In order to gain further insight into the cost function $\mathcal I(\alpha,r)$, defined in \eqref{define_cost_fun}, we compute it for a grid of values of $\alpha$ and $r$. We perform this analysis for two signals presenting different features, namely for a signal exhibiting corners (see Figure \ref{fig:nois_fr}) and for a signal with flat areas (see Figure \ref{fig:flat_n_s}). In both cases, for simplicity, we assume $\alpha_0=\alpha_1=\alpha$ and we consider the discrete box-constraint 
\be\label{parameter_domain}
(\alpha, r)\in \flp{0,\,0.005,\,0.01,\,0.015,\ldots,2.5}\times\flp{1,\,1.0025,\,1.005,\,\ldots,\,2}
\ee
(see Remark \ref{pract_box_c}). The reconstructed signal $u_{\alpha,r}$ in \eqref{frac_para_bi2} is computed by using the primal-dual algorithm presented in \cite{chambolle2011first} and \cite{garnett2007image}.\\

\begin{figure}[!h]
\begin{subfigure}{.5\textwidth}
  \centering
  \includegraphics[width=\linewidth]{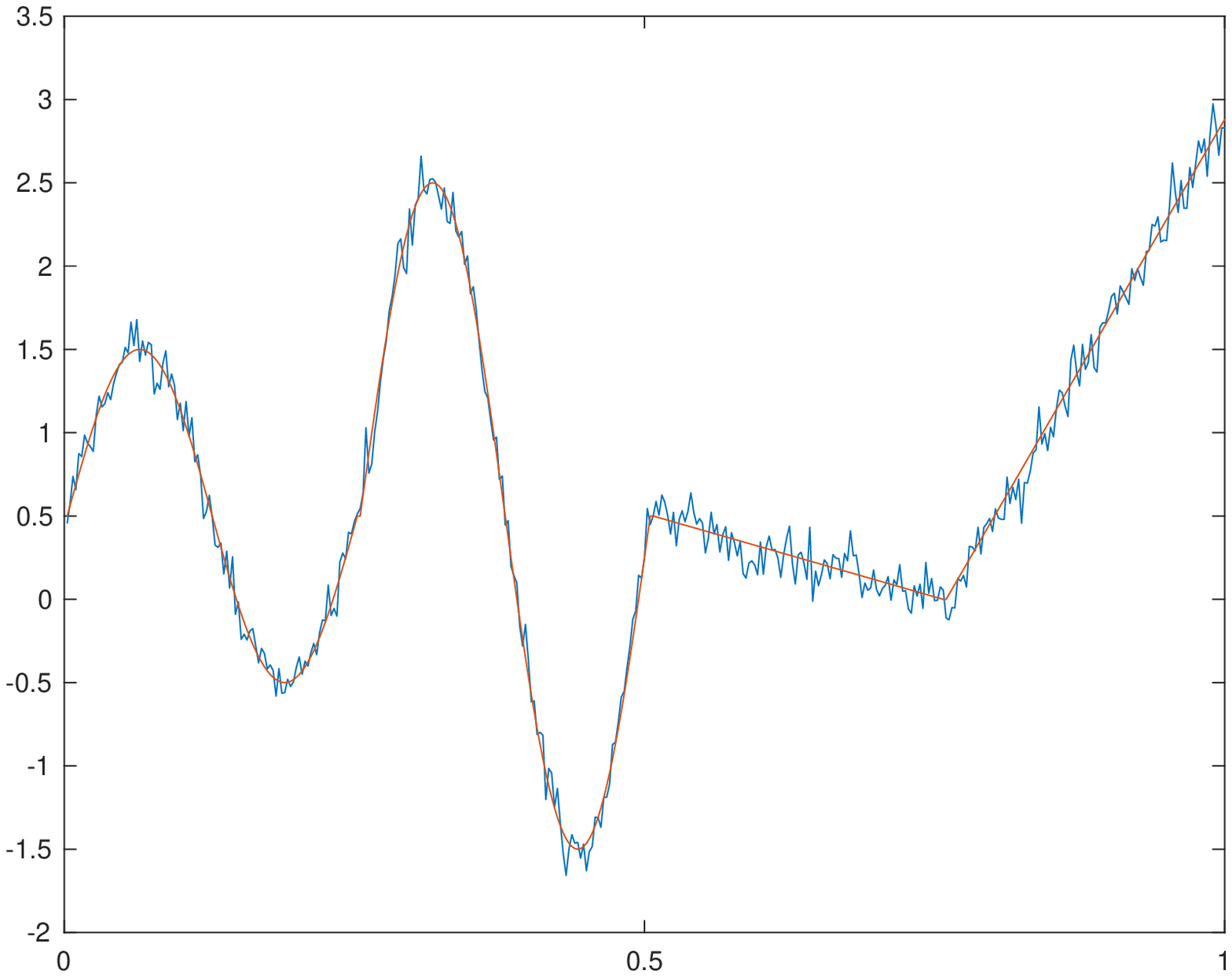}
  \caption{$u_c$ in red and $u_\eta=u_c+\eta$ in blue.}
  \label{fig:nois_fr}
\end{subfigure}%
\begin{subfigure}{.5\textwidth}
  \centering
  \includegraphics[width=\linewidth]{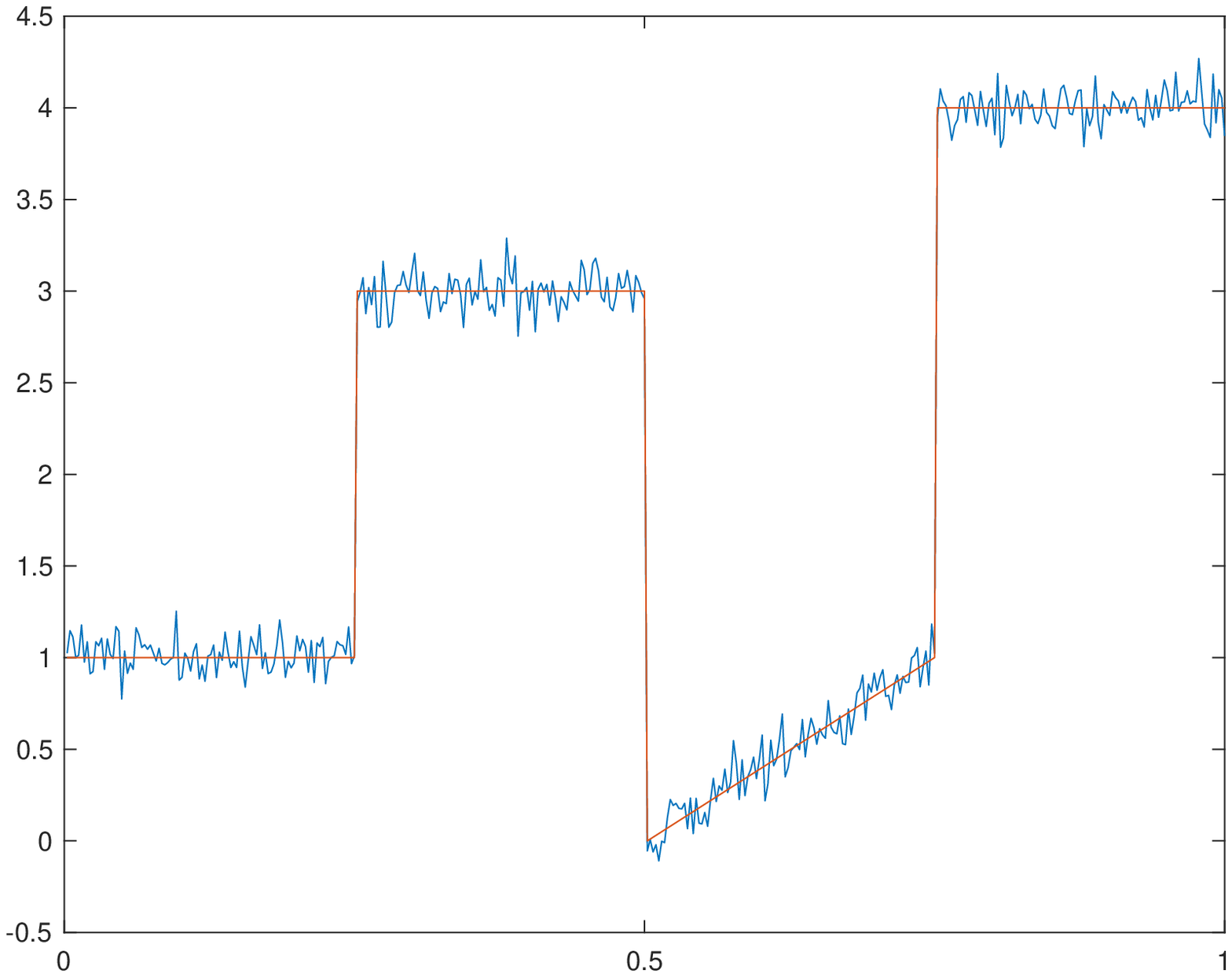}
  \caption{$u_c$ in red and $u_\eta=u_c+\eta$ in blue.}
  \label{fig:flat_n_s}
\end{subfigure}
\caption{The artificial noise $\eta$ is generated by using a Gaussian noise distribution.}
\label{fig:fig1}
\end{figure}

The numerical landscapes of the cost function $\mathcal I(\alpha,r)$ are visualized in Figure \ref{fig:contour} and Figure \ref{fig:flat_cont}, respectively. 

\begin{figure}[!h]
\begin{subfigure}{.5\textwidth}
  \centering
  \includegraphics[width=\linewidth]{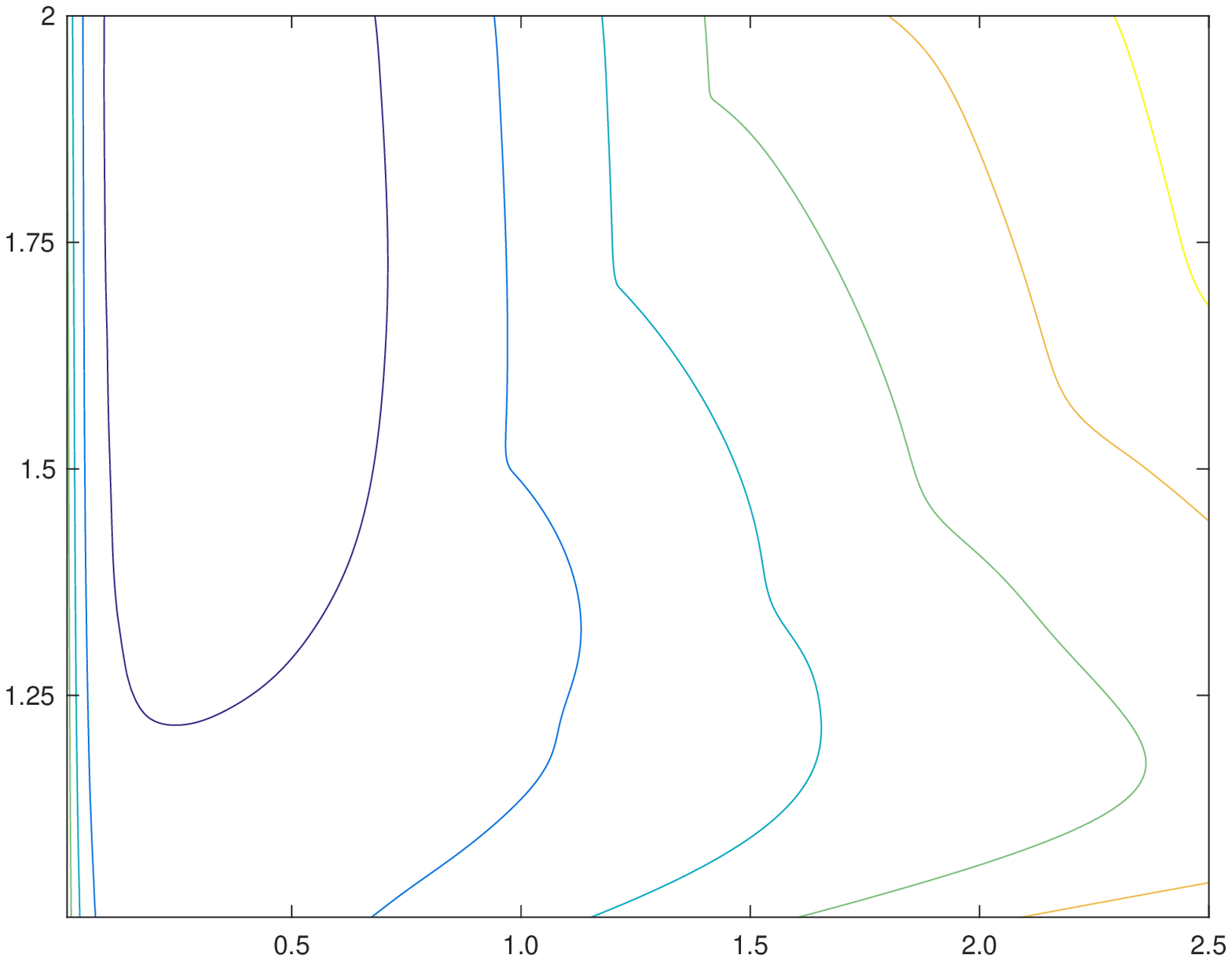}
  \caption{optimal $(\tilde\alpha,\tilde r)=(0.29,1.97)$}
  \label{fig:contour}
\end{subfigure}%
\begin{subfigure}{.5\textwidth}
  \centering
  \includegraphics[width=\linewidth]{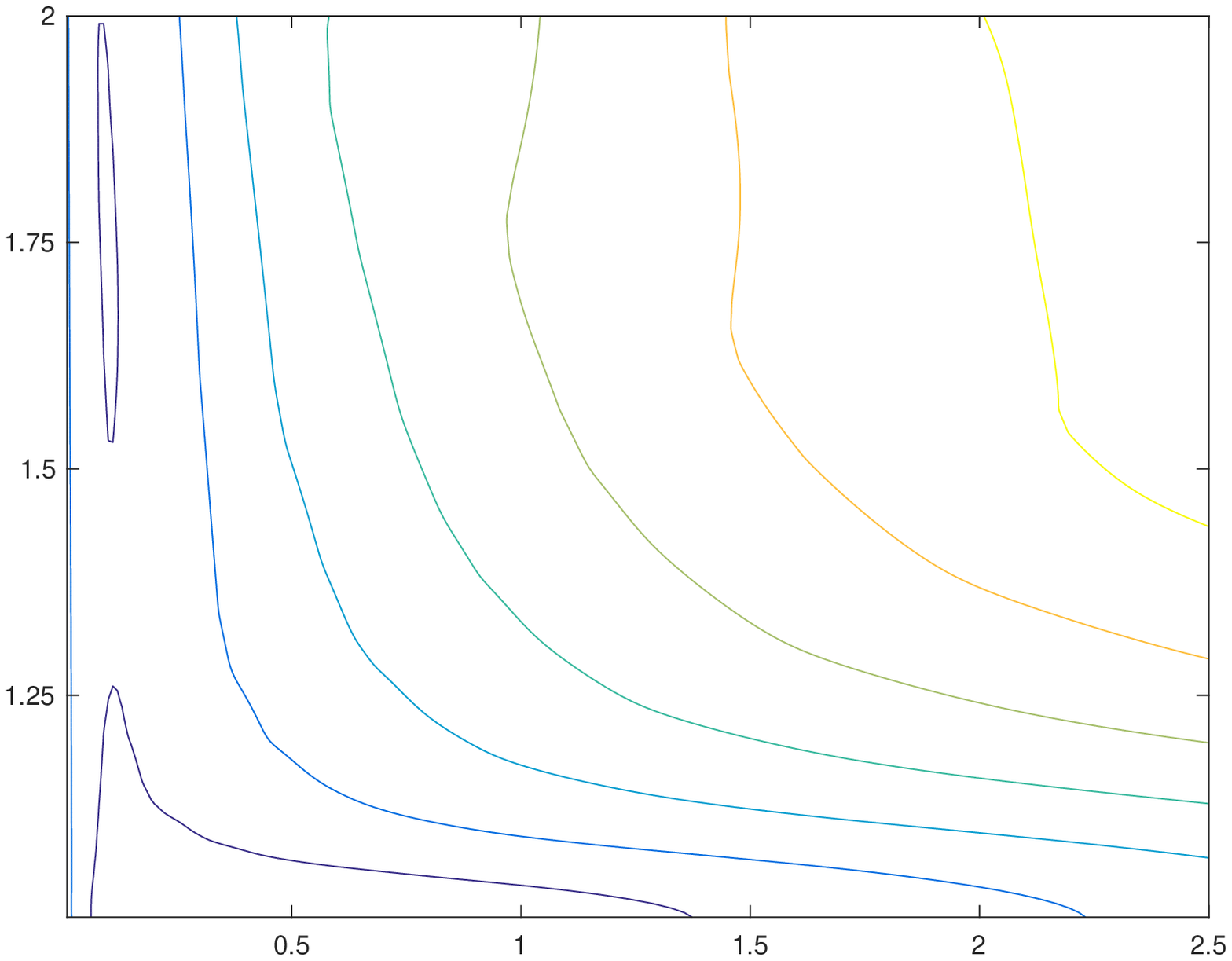}
  \caption{optimal $(\tilde\alpha,\tilde r)=(0.05,1.35)$.}
  \label{fig:flat_cont}
\end{subfigure}
\caption{Numerical landscape (contour map) for the cost function $\mathcal I(\alpha,r)$. Darker colors correspond to smaller values of $\mathcal I(\alpha,r)$}
\label{fig:fig2}
\end{figure}

As we can see, in Figure \ref{fig:flat_cont} the optimal $\tilde r$ seems to lie away from the boundary of the discrete box-constraint \eqref{parameter_domain}, which are the integer values $r=1$ and $r=2$, showing an example in which the optimal signal reconstruction with respect to the $L^2$-distance can be achieved by the fractional order $TGV^r$. We can also see in Figure \ref{fig:fig3} that the optimal denoising results are quite satisfactory. However, it is also possible that the optimal result $\tilde r$ is an integer (in Figure \ref{fig:contour} $\tilde r$ is indeed very close to an integer). For example, for a complete flat signal, i.e., $u_c\equiv1$, then $u_{\alpha,1}=u_c$ for $\alpha$ large enough, provided that the noise $\eta$ satisfies $\int_I \eta(x)\,dx=0$ (zero-average assumption on noise is a reasonable assumption, see \cite{chambolle1997image}). We point out once more that the introduction of fractional $r$-order $TGV^r$ only meant to expand the training choices for the bilevel scheme, but not to provide a superior seminorm to integer order $TGV^k$. The optimal solution $\tilde r \in [1,1/P]$, fractional order or integer order, is completely up to the given data $u_c$ and $u_\eta$.\\\\ 
Although from the numerical landscapes the cost function $\mathcal I(\alpha,r)$ (Figure \ref{fig:fig2}) appears to be almost quasiconvex (see, \cite[Section 3.4]{MR2061575}) in the variable $\alpha$, this is not the case. In the forthcoming paper \cite{de2017numerical} some explicit counterexamples showing that at least for certain piecewise constant signals $\mathcal I\fsp{\alpha, 1}$ is not quasiconvex will be presented. Additionally, Figure \ref{fig:contour} and \ref{fig:flat_cont} both show that $\mathcal I(\alpha,r)$ is not quasiconvex in the $r$ variable. The non-quasiconvexity of the cost function $\mathcal I(\alpha,r)$, implies that the training scheme $(\mathcal R)$ may not have a unique solution, i.e., the global minimizer of $\mathcal I(\alpha,r)$ might be not unique. In particular, the non-quasiconvexity of $\mathcal I(\alpha,r)$ prevents us from using standard gradient descent methods to find a global minimizer (which is the optimal solution we are looking for in \eqref{frac_para_bi}). Therefore, the identification of a reliable numerical scheme for solving the bilevel problem (upper level problem) remains an open question.\\\\
As a final remark, we point out that the development of a numerical scheme to identify the global minimizers of $\mathcal I(\alpha,1)$, $\alpha\in\R^+$ has been undertaken in \cite{de2017numerical}, where the \emph{Bouligand} differentiability and the finite discretization of $\mathcal I(\alpha, 1)$ will be analyzed. \\

\begin{figure}[!h]
\begin{subfigure}{.5\textwidth}
  \centering
  \includegraphics[width=\linewidth]{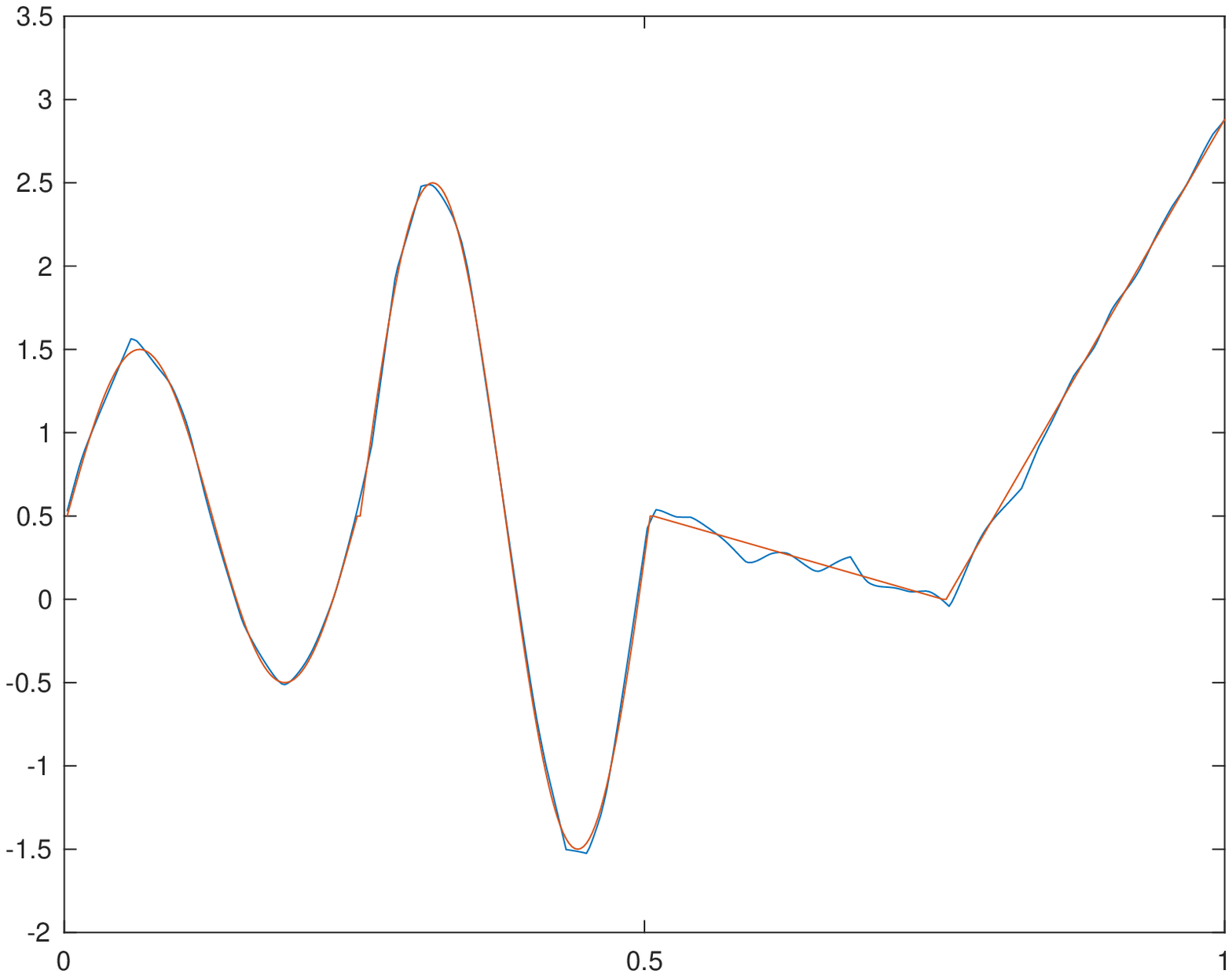}
  \caption{The optimal denoised signal $u_{\tilde \alpha,\tilde r}$ in blue}
  \label{fig:frac_de}
\end{subfigure}%
\begin{subfigure}{.5\textwidth}
  \centering
  \includegraphics[width=\linewidth]{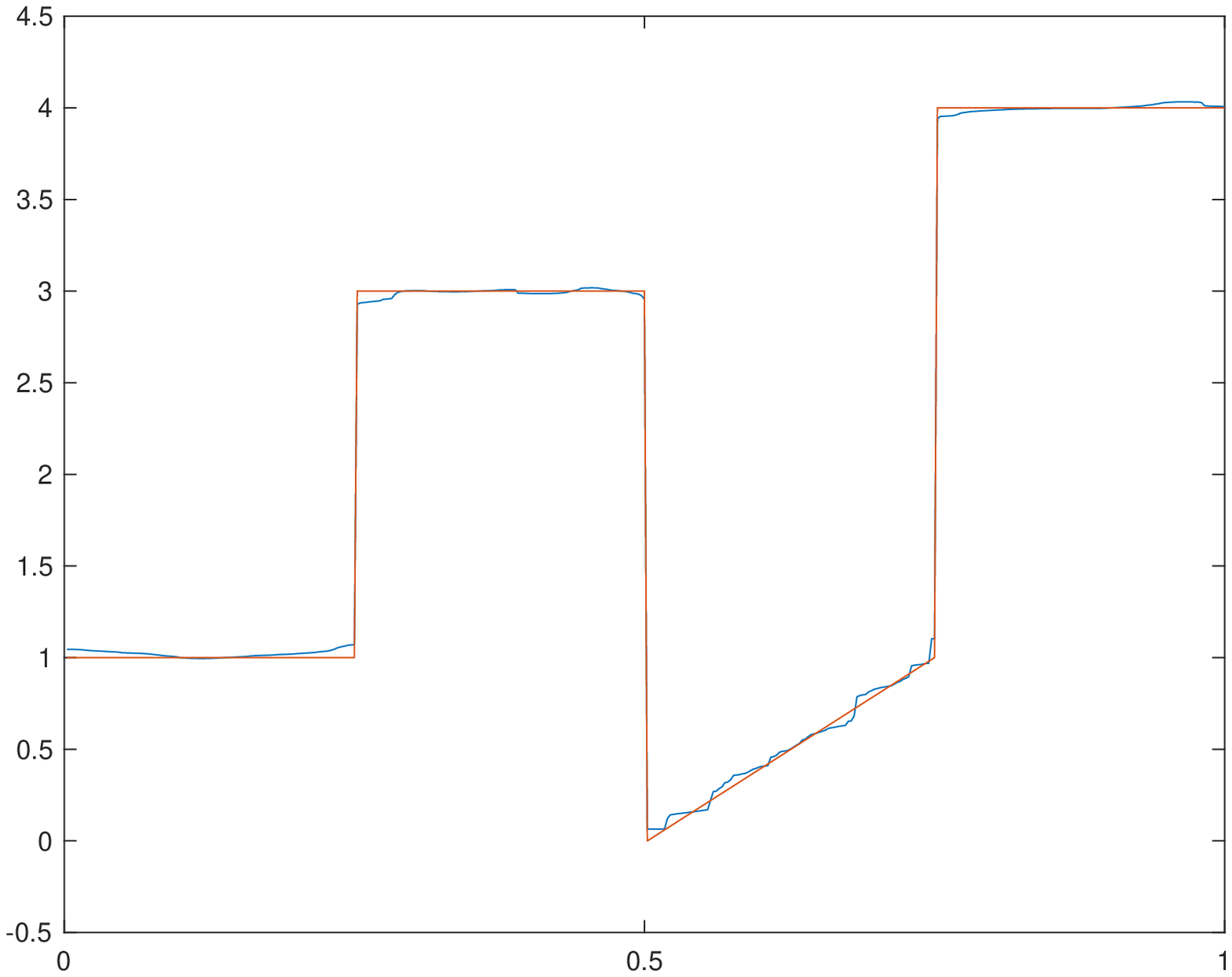}
  \caption{The optimal denoised signal $u_{\tilde \alpha,\tilde r}$ in blue}
  \label{fig:opt_flat}
\end{subfigure}
\caption{The optimal denoised signal $u_{\tilde \alpha,\tilde r}$ in both cases are very close to the given clean signal.}
\label{fig:fig3}
\end{figure}

\section*{Acknowledgements}
P. Liu is partially funded by the National Science Foundation under Grant No. DMS - 1411646. E. Davoli is supported by the Austrian Science Fund (FWF) projects P27052 and F65. The authors wish to thank Irene Fonseca for suggesting this project and for many helpful discussions and comments. The authors also thank  Giovanni Leoni for comments on the subject of Section \ref{prelimy}.


\bibliographystyle{plain}

\end{document}